\def\acts{\mathrel{\reflectbox{$\righttoleftarrow$}}}
\newcommand{\ol}{\overline}
\newcommand{\smt}[4]{\left( \begin{smallmatrix} #1 &#2\\ #3 &#4\\ \end{smallmatrix} \right) }
\newcommand{\GLmod}[1]{\mathrm{GL}_2(\mathbb Z/#1\mathbb Z)}
\newcommand{\SLZ}{\mathrm{SL}_2(\mathbb Z)}
\newcommand{\modstar}[1]{(\mathbb Z/#1\mathbb Z)^\times}
\renewcommand{\P}{\mathbb P}
\newcommand{\C}{\mathbb C}
\newcommand{\R}{\mathbb R}
\newcommand{\Q}{\mathbb Q}
\newcommand{\Z}{\mathbb Z}
\newcommand{\F}{\mathbb F}
\newcommand{\G}{\mathbb G}
\newcommand{\HH}{\mathbb H}
\newcommand{\Gm}{\G_{\mathrm{m}}}
\newcommand{\divisor}{\mathrm{div}}
\newcommand{\quot}{\mathrm{quot}}
\newcommand{\Norm}{\mathrm{Norm}}
\newcommand{\Hom}{\mathrm{Hom}}
\newcommand{\End}{\mathrm{End}}
\newcommand{\GL}{\mathrm{GL}}
\newcommand{\pr}{\mathrm{pr}}
\newcommand{\rank}{\mathrm{rank}}
\newcommand{\Pic}{\mathrm{Pic}}
\newcommand{\Gal}{\mathrm{Gal}}
\newcommand{\Spec}{\operatorname{Spec}}
\newcommand{\Frob}{\mathrm{Frob}}
\newcommand{\Ver}{\mathrm{Ver}}
\newcommand{\Id}{\mathrm{Id}}
\newcommand{\ord}{\mathrm{ord}}
\newcommand{\new}{\mathrm{new}}
\newcommand{\pnew}{{p\text{-new}}}
\newcommand{\Aut}{\mathrm{Aut}}
\newcommand{\lto}{\longrightarrow}
\newcommand{\calL}{\mathcal{L}}
\newcommand{\calM}{\mathcal M}
\newcommand{\calA}{\mathcal A}
\newcommand{\calO}{\mathcal{O}}
\newcommand{\tT}{\tilde T}
\newcommand{\dia}[1]{\widetilde{\langle #1 \rangle}}
\numberwithin{equation}{section}
\newtheorem{Theorem}[equation]{Theorem}
\newtheorem{Proposition}[equation]{Proposition}
\newtheorem{Lemma}[equation]{Lemma}
\newtheorem{Corollary}[equation]{Corollary}
\newtheorem{subTheorem}[subsubsection]{Theorem}
\newtheorem{subProposition}[subsubsection]{Proposition}
\newtheorem{subLemma}[subsubsection]{Lemma}
\newtheorem{subCorollary}[subsubsection]{Corollary}
\newtheorem{Definition}[equation]{Definition}
\newtheorem{question}[equation]{Question}
\newtheorem{Remark}[equation]{Remark}
\newtheorem{subDefinition}[subsubsection]{Definition}
\newtheorem{subRemark}[subsubsection]{Remark}
\newenvironment{subeqn}{\refstepcounter{subsubsection}
	$$}{\leqno{\rm(\thesubsubsection)}$$\global\@ignoretrue}
\newenvironment{prf}[1]{\trivlist
\item[\hskip \labelsep{\it
#1\hspace*{.3em}}]}{~\hspace{\fill}~$\square$\endtrivlist}
\newenvironment{proof}{\begin{prf}{\bf Proof}}{\end{prf}}
\tikzset{
labl/.style={anchor=south, rotate=90, inner sep=.5mm}
}
\title{Spectral theory of isogeny graphs}
\author{Giulio Codogni \& Guido Lido \\
	\small{
		\href{mailto:codogni@mat.uniroma2.it}{codogni@mat.uniroma2.it}
		\quad
		\href{mailto:guidomaria.lido@gmail.com}{guidomaria.lido@gmail.com}} \\
		\small{Universit\`{a} di Roma Tor Vergata}
}
\begin{document}

\maketitle

\begin{abstract}
We consider finite graphs whose vertices are supersingular elliptic curves, possibly with level structure, and edges are isogenies. They can be applied to the study of modular forms and to isogeny based cryptography. The main result of this paper is an upper bound on the absolute values of  the eigenvalues of their adjacency matrices, which in particular implies that these graphs are Ramanujan. We also study the asymptotic distribution of the eigenvalues of the adjacency matrices, the number of connected components, the automorphisms of the graphs, and the connection between the graphs and modular forms. 
\end{abstract}

\tableofcontents

\section{Introduction}\label{S:graph_definition}
Given two distinct prime numbers $p$ and $\ell$, supersingular isogeny graphs are finite graphs whose vertices are isomorphism classes of supersingular elliptic curves defined over a field of characteristic $p$, possibly enriched with some level structure, and edges are degree $\ell$ isogeny (see Definitions \ref{def:level_structures} and \ref{def:graph}). The number of vertices of these graphs grows linearly in $p$.

Theorems \ref{thm:main1} and \ref{thm:main2}, our main results, give information about the spectrum of the adjacency matrices of these graphs. They rely on algebraic geometry constructions.

The spectrum of the adjacency matrix is not a complete invariant of a graph. Non-isomorphic graphs whose adjacency matrices have the same spectrum are sometimes called cospectral mates. Spectral graph theory allows to gather important information about the geometry of the graph only out of the spectrum of the adjacency matrix, and this is why our work provides a better understanding of isogeny graphs.

\medskip

Isogeny graphs were first studied by Mestre \cite{Mestre} in the 80's. His goal was to study modular forms, in particular to compute eigenforms out of eigenvectors of adjacency matrices of isogeny graphs. This approach has been recently made very practical in \cite{Cow}. Our Theorems \ref{thm:modular_general}, \ref{thm:modular_special} generalize \cite[Theorem 2.1]{Mestre}, and we hope they lead to possible extensions of Mestre's ``M\'ethode des graphes'', even though an analogue for formula (1) in loc. cit. is needed. 

In the 90’s people from graph theory were looking for explicit examples of graphs with optimal spectral gap, and consequently optimal expansion constant and mixing time. Surprisingly, classical isogeny graphs, i.e. without level structure, provided such examples! These facts are discussed in Section \ref{S:graph}, where we also show, as corollary of our main results, that isogeny graphs with level structure also have this property.

More recently, isogeny graphs started to play 
an important role in cryptography, as many protocols from isogeny based cryptography rely on their features. For instance, information about the spectrum of isogeny graphs with Borel level structure is used to prove Statistical Zero Knowledge for a proof of knowledge in \cite{noi} and for signature schemes in \cite{SQIsign2D-West,SQIsignHD:}. 
This is discussed in Section \ref{S:crypto}.

\subsection{Main Definitions and Results}

The vertices of the graphs we study are supersingular elliptic curves enriched with some torsion data. We start by specifying what kind of data we are interested in.

\begin{Definition}[Level structure on elliptic curves]\label{def:level_structures}
Fix a positive integer $N$ and a subgroup $H$ of $\GLmod{N}=\Aut((\Z/N\Z)^2)$. Let $k$ be a field where $N$ is invertible. For an elliptic curve $E/k$, a \emph{level $H$ structure} on $E$ is an isomorphism $\phi\colon (\Z/N\Z)^2\to E[N] $ considered up to composition with an element of $H$, i.e. two isomorphisms $\phi$ and $\phi'$ are equivalent if there exists an element $h$ in $H$ such that $\phi=\phi' \circ h$.
\end{Definition}

Sometimes level $H$ structures have more explicit interpretations, as illustrated below.

\begin{itemize}
\item\emph{Trivial level structure}: when $H=\GLmod{N}$, there is a unique level structure on every elliptic curve;
\item\emph{Borel level structure:} when $H = \{\smt *0**\}$ is the subgroup of lower triangular matrices, a level $H$ structure is equivalent to the choice of a cyclic subgroup of order $N$ in $E[N]$ (equivalently, Borel level structure can be defined using upper triangular matrices; we prefer the lower ones as they make some computations with modular forms less cumbersome);
\item\emph{Full level structure}: when $H=\{\mathrm{Id}\}$, an $H$ structure is equivalent to the choice of a basis of $E[N]$;
\item\emph{Split Cartan level structure}: when $H=\{(\begin{smallmatrix} *&0\\0&*	\end{smallmatrix})\}$, a level structure is equivalent to the choice of an ordered pair of cyclic subgroups $C_1, C_2 < E[N]$ having order $N$ and trivial intersection. This level structure gives a graph isomorphic to a graph with Borel level structure, see Section \ref{S:BorelversusCartan}, so we will not discuss it in details. If we instead consider $H=\{(\begin{smallmatrix} *&0\\0&*	\end{smallmatrix})\} \cup \{(\begin{smallmatrix} 0&*\\ *&0	\end{smallmatrix})\}$, which is equal to the normalizer of $\{(\begin{smallmatrix} *&0\\0&*	\end{smallmatrix})\}$ when $N$ is an odd prime power, an $H$-level structure corresponds to a non-ordered pair of cyclic subgroups; the corresponding graph is a quotient of the graph with Cartan level structure.

\item\emph{Torsion point level structure}: when $H=\{(\begin{smallmatrix} *&0\\ *&1	\end{smallmatrix})\}$, a level $H$ structure is equivalent to the choice of a point of order $N$;
\item\emph{Non split Cartan level structure}:  when $H$ is a non-split Cartan subgroups of $\GLmod{N}$, which is unique up to conjugation. Details are given \cite{PV} and  in \cite{RW} these structures are interpreted as  ``necklaces'' of subgroups of $E[N]$ for $N$ prime.

\end{itemize}

Fix $(E_1, \phi_1)$ and $(E_2,\phi_2)$, where $E_1, E_2$ are elliptic curves over a common field $k$, and $\phi_i$ is a level $H$ structure on $E_i$. A morphism 
$\alpha \colon (E_1, \phi_1)\to (E_2,\phi_2)$ is an isogeny $\alpha\colon E_1\to E_2$ such that $\alpha\circ\phi_1 = \phi_2$ as level $H$ structures on $E_2$, or equivalently such that there exists an element $h\in H$ satisfying $\alpha\circ\phi_1 = \phi_2\circ h$. 
The degree of such a morphism is the degree of the corresponding isogeny $E_1\to E_2$. 
We call such an $\alpha$ a \emph{morphism, or isogeny, of elliptic curves with level structures}.
A morphism is an \emph{isomorphism} if it is an isomorphism at the level of elliptic curves, i.e. it has degree one.

\begin{Remark}
In the context pairs $(E,\phi)$ of elliptic curves with level $H$ structure, if $u$ is an automorphism of $E$, then the pairs $(E,\phi)$ and $(E,u\circ \phi)$ are always isomorphic. Nevertheless $u$ does not always define an automorphism of $(E,\phi)$: it does if and only if $$\phi^{-1}\circ u\circ \phi \colon(\Z/N\Z)^2 \to (\Z/N\Z)^2 $$ lies in $H$. 
In particular, if $\smt{-1}{}{}{-1} \notin H $, then $-1$ is not an automorphism of $(E,\phi)$ even though $(E,\phi)\cong (E,-\phi)$.
\end{Remark}

\begin{Definition}[Supersingular isogeny graph]\label{def:graph}
Fix a positive integer $N$, a subgroup $H$ of $\GLmod{N}$ and distinct prime numbers $p, \ell$ not dividing $N$.

The isogeny graph with level structure $G = G(p,\ell,H)$ is the directed graph with:
\begin{itemize}
	\item vertices $V=\{(E_1,\phi_1),\dots ,( E_r, \phi_r)\}$ 
	a set of representatives of isomorphism classes of supersingular elliptic curves $E/\overline{\F}_p$ with a level $H$ structure $\phi$.
	\item edges: given vertices $(E_i,\phi_i)$ and $(E_j,\phi_j)$, edges between them are degree $\ell$ morphisms $(E_i,\phi_i)\to (E_j,\phi_j)$, modulo postcomposition by automorphisms of $(E_j,\phi_j)$.
\end{itemize}

We denote by $A={(a_{ij})_{i,j}}$ 
the adjacency matrix of $G$, namely the matrix whose entries $a_{ij}$ are the number of edges $(E_j,\phi_j)\to (E_i,\phi_i)$.
\end{Definition}


In the context of the above definition, given a vertex $(E_i,\phi_i)$, taking the kernel of isogenies gives a bijection between cyclic subgroups of cardinality $\ell$ of $E_i[\ell]$, and edges coming out of the vertex $(E_i,\phi_i)$, since edges correspond to isogenies up to postcomposition by automorphisms. In particular, there are exactly $\ell{+}1$ edges coming out of each vertex. The graph does not depend on the choices of the representatives $(E_i,\phi_i)$.

The graph $G$ might not be connected. For every connected component $G_i$, consider the vector $v_i$ in $\mathbb{C}^V$ obtained as formal sum of the vertex of $G_i$. Then $A^t\, v_i=(\ell+1)v_i$, where $^t$ denotes the transpose. This shows that $\ell{+}1$ is an eigenvalue of $A$.

Our first main result gives information about
the eigenvalues of $A$ in the cases where $H$ contains the scalar matrices and its image $\det(H) \subset (\Z/N/\Z)^\times$ under the determinant is maximal. 
For example, it covers the case of graphs with Borel, Cartan (both split and non-split) and trivial level structure (which give the classical isogeny graph).
In particular, we recover Pizer's result \cite[Theorem 1]{Pizer} (see also the detailed discussion in Section \ref{sec:otherworks}). 

\begin{Theorem}\label{thm:main1}
With the notation of Definition \ref{def:graph}, if $H$ contains the scalar matrices and $\det(H)=(\Z/N\Z)^{\times}$, then the graph $G(p,\ell,H)$ is connected, its adjacency matrix $A$ is diagonalizable over $\R$, the eigenvalue $\ell+1$ has multiplicity one, and all the other eigenvalues have absolute value smaller than 
$$2\sqrt{\ell}-\left(4\sqrt{\ell}\right)^{-2|V|+3}\,,$$
where $|V|$ is the number of vertices of $G(p,\ell,H)$. In particular, all eigenvalues different from $\ell+1$ are contained in the open Hasse interval $(-2\sqrt{\ell},2\sqrt{\ell})$.
\end{Theorem}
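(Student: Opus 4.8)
The plan is to reduce the theorem to a statement about modular forms and the Ramanujan--Petersson conjecture (Deligne's theorem), which controls the Hecke eigenvalues, and then to upgrade the resulting closed interval $[-2\sqrt\ell,2\sqrt\ell]$ to the slightly smaller \emph{open} interval with the explicit gap $\left(4\sqrt\ell\right)^{-2|V|-1}$. The key observation is that the adjacency operator $A$ of $G(p,\ell,H)$ is, up to the combinatorial normalization coming from automorphisms, the Brandt matrix / Hecke operator $T_\ell$ acting on a space of functions on the supersingular locus of a modular curve with level $H$ structure. Under the hypotheses $H\supseteq$ scalars and $\det H=(\Z/N\Z)^\times$, this supersingular set is the set of $\overline\F_p$-points of a geometrically connected modular curve (or its coarse space), which gives connectedness of $G$; the constant vector spans a $T_\ell$-eigenspace with eigenvalue $\ell+1$, which is the ``Eisenstein'' part, and its multiplicity-one statement follows from the geometric connectedness just established.

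Next I would decompose the orthogonal complement of the constant functions. By the Eichler/Deligne--Rapoport theory (the statement the paper attributes to Mestre and generalizes in Theorems~\ref{thm:modular_general} and \ref{thm:modular_special}), this complement is, as a Hecke module, isomorphic to a space of weight-$2$ cusp forms of the appropriate level whose associated automorphic representations are supersingular (i.e.\ special or supercuspidal) at $p$. For each such newform $f$ the eigenvalue of $T_\ell$ acting on the graph is the Hecke eigenvalue $a_\ell(f)$, and Deligne's bound (Ramanujan--Petersson for holomorphic modular forms) gives $|a_\ell(f)|\le 2\sqrt\ell$ with equality impossible for a single eigenvalue unless $a_\ell(f)=\pm 2\sqrt\ell$, which cannot happen since $a_\ell(f)$ is an algebraic integer and $2\sqrt\ell$ is not (as $\ell$ is prime, $\sqrt\ell\notin\Z$). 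Together with diagonalizability --- which follows because $A$ is conjugate, via the diagonal matrix of vertex weights $w_i=\#\Aut(E_i,\phi_i)$, to a symmetric matrix, so it has a real eigenbasis --- this already yields that every eigenvalue other than $\ell+1$ lies in the open interval $(-2\sqrt\ell,2\sqrt\ell)$.

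The last and most delicate step is to make the gap quantitative, i.e.\ to replace ``$<2\sqrt\ell$'' by ``$\le 2\sqrt\ell-\left(4\sqrt\ell\right)^{-2|V|-1}$''. Here the idea is: each eigenvalue $\lambda\ne\ell+1$ is a root of the characteristic polynomial $\chi_A(x)\in\Z[x]$ of degree $|V|$ (the matrix $A$ has integer entries since the $a_{ij}$ count edges), and the quantity $(2\sqrt\ell-\lambda)(2\sqrt\ell+\lambda)=4\ell-\lambda^2$ is a nonzero algebraic integer lying in a number field of degree at most $|V|$ over $\Q$; in fact $4\ell-\lambda^2$ is a root of a monic integer polynomial one can write down from $\chi_A$, whose other roots are bounded in absolute value by something like $4\ell+(\ell+1)^2$ or, more carefully, by $8\ell$ after subtracting off the $\ell+1$ eigenvalue. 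Taking the norm of $4\ell-\lambda^2$ down to $\Q$, which is a nonzero rational integer hence has absolute value $\ge 1$, and bounding the other conjugates crudely, one gets a lower bound $|4\ell-\lambda^2|\ge \left(4\sqrt\ell\right)^{-2|V|+O(1)}$; since $4\ell-\lambda^2=(2\sqrt\ell-|\lambda|)(2\sqrt\ell+|\lambda|)\le 4\sqrt\ell\,(2\sqrt\ell-|\lambda|)$, this forces $2\sqrt\ell-|\lambda|\ge \left(4\sqrt\ell\right)^{-2|V|-1}$ after arranging the exponent bookkeeping. The main obstacle is precisely this arithmetic estimate: one must be careful that $\lambda$ need not be a Hecke \emph{newform} eigenvalue in a clean number field (oldforms and the combinatorics of $H$ intervene), so the cleanest route is to argue directly with $\chi_A(x)/(x-\ell-1)\in\Z[x]$ and a resultant-type computation, tracking that the polynomial $g(x)\in\Z[x]$ whose roots are $4\ell-\lambda_i^2$ (over the non-Eisenstein eigenvalues $\lambda_i$) is monic with bounded coefficients, so that $|g(0)|=\prod_i|4\ell-\lambda_i^2|\ge 1$ while every factor except the one we care about is $\le 4\sqrt\ell\cdot 4\sqrt\ell=16\ell$ in size, giving the stated bound with a bit of room to spare.
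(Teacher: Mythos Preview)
Your overall architecture---diagonalizability via the weighted Hermitian form, reduction to Hecke eigenvalues, then an algebraic-integer product argument for the explicit gap---matches the paper's. Your route through modular forms and Ramanujan--Petersson is a valid alternative to the paper's more geometric path (which identifies $\ker(w_*)$ with the character group of the toric part of $\Pic^0(M_{H_p,\F_p})$, transfers to the abelian variety $\calA$ via N\'eron models, and invokes Eichler--Shimura plus Weil directly). Both eventually rest on Deligne.

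There is, however, a genuine gap in your argument for the \emph{strict} inequality $|\lambda|<2\sqrt\ell$. You write that $a_\ell(f)=\pm 2\sqrt\ell$ ``cannot happen since $a_\ell(f)$ is an algebraic integer and $2\sqrt\ell$ is not''. But $2\sqrt\ell$ \emph{is} an algebraic integer (root of $x^2-4\ell$), so this step fails. Nothing elementary rules out $a_\ell(f)=\pm2\sqrt\ell$: the Hecke field could contain $\Q(\sqrt\ell)$, and the characteristic polynomial of $A$ could in principle have $x^2-4\ell$ as a factor without violating integrality. The paper handles this by a Coleman--Edixhoven style argument (Theorem~\ref{thm:spet2}): if some eigenvalue had modulus $2\sqrt\ell$, one isolates a sub-abelian variety $B$ of $\calA$ over $\Q$ on which this happens; $B$ has good reduction at $\ell$, and Eichler--Shimura forces $T_\ell=2\Frob$ on $B_{\F_\ell}$, hence $T_\ell$ kills $H^0(B_{\F_\ell},\Omega^1)$. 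Lifting to $\Z_\ell$ this makes $\det(T_\ell)$ divisible by $(2\ell)^{\dim B}$, contradicting that the eigenvalues have absolute value $2\sqrt\ell$. Without this (or an equivalent input such as the semi-simplicity of the Hecke polynomial at good primes), your product argument for the quantitative gap never gets started, since you need $4\ell-\lambda^2\neq0$ to know $|g(0)|\ge1$.

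Once strict inequality is in hand, your norm argument with $g(x)=\prod_i(x-(4\ell-\lambda_i^2))\in\Z[x]$ is essentially the $k'=1$ case of the paper's Lemma~\ref{lem:bound_poly}, which instead takes the Galois product of $2\sqrt\ell-\xi^{-1}\lambda$ over a field containing $\sqrt\ell$ and the relevant roots of unity; both yield the stated bound with room to spare.
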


When the graph contains pairs $(E,\phi)$ with non-trivial automorphisms (i.e. automorphisms not induced by $\pm 1\in \Aut(E)$), the adjacency matrix $A$ is not symmetric.

\medskip

When $\det(H)$ is strictly contained in $(\Z/N\Z)^{\times}$, we need to introduce some further notations to describe the connected components of the graphs, and their partitions. Let $\mu_N^\times(\overline{\F}_p)$ be the set of primitive $N$-th root of unity in $\overline{\F}_p$. This is a principal homogeneous space for the right action of $(\Z/N\Z)^{\times}$ given by $\zeta\cdot a = \zeta^a$. The group $\det(H)$ is a subgroup of $(\Z/N\Z)^{\times}$, so it also acts on $\mu_N^\times(\overline{\F}_p)$ and we can form the quotient $R_H:=\mu_N^\times(\overline{\F}_p)/\det(H)$.

\begin{Definition}[Weil invariant of a level structure]\label{def:Weil_invariant}
Consider an elliptic curve with level $H$ structure $(E,\phi)$. Let $w$ be the Weil pairing on the $N$-torsion of $E$ and let 
$$
w(\phi) = w(\phi(\begin{smallmatrix}1\\0\end{smallmatrix}), \phi(\begin{smallmatrix}0\\1\end{smallmatrix}))  \,.
$$
As $\phi$ is defined only modulo the action of $H$, the invariant $w(\phi)$ is an element of the quotient $R_H$. We call this invariant the Weil invariant of the level structure. 
\end{Definition}

The Weil invariant gives an obstruction to $G = G(p,\ell,H)$ being connected: if two vertices $v_1,v_2$  are connected by a degree $\ell$ isogeny, then \cite[Chapter III, Proposition 8.2]{Sil} implies that their corresponding Weil invariants are connected by the action of $\ell$, i.e. $w(v_2) = w(v_1)^\ell$. Hence in a connected component of $G$, the Weil invariant has image an orbit of the action of $\ell$ on $R_H$. Let $\{C_1,\dots , C_n\}$ be the orbits of $\ell$ acting on $R_H$ and for each $i$ we denote  
$$G_i := w^{-1}(C_i)$$
which is a subgraph of $G$ by the previous argument. Our second main result generalizes Theorem~\ref{thm:main1}.

\begin{Theorem}\label{thm:main2} 
With the notation of Definition \ref{def:graph}, let  $G=G(p,\ell,H)$ with $H$ any subgroup of $\GLmod{N}$, and let  $G_1, \ldots, G_n$ be the subgraphs of $G$ defined above.

\begin{description}[leftmargin=\parindent,labelindent=\parindent]

\item[Connected components] Each $G_i$ is connected, i.e. the graph $G$ has $n$ connected components. Let $\mathcal N_H$ be the normalizer of $H$ in $\GLmod{N}$. If $p$, $\ell$ and $\det(\mathcal N_H)$ together generate $(\Z/N\Z)^\times$, then the $G_i$'s  are all isomorphic.

\item[Spectrum of the adjacency matrix] Denote by $k$ the order of $\ell$ in $(\Z/N\Z)^\times/\det(H)$, and by $k'$ the smallest positive integer such that 
$\ell^{k'}\Id \in H$. 
The adjacency matrix $A_i$ of $G_i$ is diagonalizable over $\C$ and, for each $k$-th root of unity $\zeta$, the number $(\ell{+}1)\zeta$ is an eigenvalue of $A_i$ of multiplicity one. The other eigenvalues of $A_i$ are complex numbers with angle in  $\tfrac{\pi}{k'}\mathbb{Z}$ and absolute value smaller than $$2\sqrt{\ell}-\left(4\sqrt{\ell}\right)^{-2(d-k)k'+1}\,,$$ where $d$ is the number of vertices of $G_i$.

\end{description}

\end{Theorem}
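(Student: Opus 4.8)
The plan is to deduce Theorem~\ref{thm:main2} from Theorem~\ref{thm:main1} by passing to an auxiliary level structure whose determinant group is all of $(\Z/N\Z)^\times$, and then descending. More precisely, set $\widetilde H := H \cdot \{a\cdot\Id : a \in (\Z/N\Z)^\times\}$, the subgroup generated by $H$ and the scalars; then $\widetilde H$ contains the scalar matrices and $\det(\widetilde H) = (\Z/N\Z)^\times$, so Theorem~\ref{thm:main1} applies to $\widetilde G := G(p,\ell,\widetilde H)$: it is connected with diagonalizable, real-spectrum adjacency matrix, simple top eigenvalue $\ell+1$, and all other eigenvalues strictly inside the Hasse interval with the stated quantitative gap. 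The forgetful map $(E,\phi)\mapsto (E,\phi)$ induces a covering $\pi\colon G \to \widetilde G$ of graphs, and the deck group is a quotient of $\widetilde H / H$; the key structural input is that this covering is \emph{Galois} with an abelian deck group $\Gamma$ (a subquotient of $(\Z/N\Z)^\times$ modulo $\det H$, acting by twisting the level structure by scalars), compatibly with how $\ell$ permutes the Weil invariants.

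First I would make the covering-space picture precise: identify the fibers of $\pi$ with torsors under $\Gamma$, check that $A$ (adjacency matrix of $G$) commutes with the $\Gamma$-action, and decompose $\C^V = \bigoplus_{\chi} \C^V_\chi$ into characters of $\Gamma$. On the $\chi$-isotypic piece, $A$ acts as a ``twisted'' adjacency operator on $\widetilde G$; since the underlying graph $\widetilde G$ is the Ramanujan graph controlled by Theorem~\ref{thm:main1}, the eigenvalues of each twisted operator are eigenvalues of $\ell$ times a root of unity coming from $\chi$ (this is where the angles in $\Z\frac{\pi}{k'}$ appear — $k'$ is exactly the order of the relevant scalar $\ell\cdot\Id$ in $\widetilde H/H$, governing how $\Frob_\ell$, i.e. the $\ell$-isogeny, interacts with the twist) and the absolute values are bounded by those of $A_{\widetilde G}$, hence by $2\sqrt\ell$ minus the gap. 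The connectedness statement for the $G_i$ and the ``all $G_i$ isomorphic'' claim follow from the transitivity of the combined action of $p$, $\ell$ and $\det N(H)$ on $R_H$: $p$ acts because the $p$-power Frobenius relates any supersingular curve over $\Fpbar$ to one defined over $\F_{p^2}$ and twists the Weil pairing by $p$, $\ell$ acts by the isogeny as already noted, and $\det N(H)$ acts by symmetries of the graph permuting Weil invariants; together they connect all of $R_H$, so all orbits $C_i$ are in one $\Aut$-orbit.

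The quantitative eigenvalue bound requires one more step: the bound in Theorem~\ref{thm:main1} depends on $|V|$, and here we must feed in $|\widetilde V|$ versus $|V|$. Since $\pi$ has degree $|\Gamma| = k'/(\text{something})$ — more carefully, one relates $|V| = |V(G)|$, $|V(G_i)|$, and $|\widetilde V|$, and the exponent $-2dk'+1$ with $d = |V(G_i)| - k$ should come out by bookkeeping: $G_i$ is a connected component of $G$, the covering $G_i \to \widetilde G$ has degree dividing $k'$, and the ``$-2|\widetilde V| - 1$'' from Theorem~\ref{thm:main1} transforms into ``$-2dk'+1$'' under this dictionary (the shift by $k$ accounts for the $k$ simple eigenvalues $(\ell+1)\zeta$, which are the pullbacks of the single top eigenvalue $\ell+1$ of $\widetilde G$ under the $\Z/k$ of extra symmetry, and must be excluded from the ``other eigenvalues''). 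The diagonalizability and the location of eigenvalues at angles in $\Z\frac{\pi}{k'}$ follow because $A$ restricted to $G_i$ satisfies $A^{k'}$ is, up to scalar, a symmetric (real, diagonalizable) operator — essentially $(A|_{G_i})^{k'}$ descends to a self-adjoint Hecke-type operator — so its eigenvalues are $k'$-th roots of positive reals times the eigenvalues of that operator.

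I expect the main obstacle to be the bookkeeping in the last paragraph: getting the precise relation between $|V(G_i)|$, $k$, $k'$, $|\widetilde V|$ and the degree of the covering right, so that the exponent $-2dk'+1$ comes out exactly rather than off by a constant, and simultaneously verifying that the $k$ eigenvalues $(\ell+1)\zeta$ are genuinely simple and are the only ones on the circle of radius $\ell+1$. A secondary subtlety is justifying that the twisted adjacency operators on $\widetilde G$ have real-modulus spectrum bounded by the untwisted one — this should follow from a comparison/averaging argument (the operator $\sum_{\gamma\in\Gamma} \gamma$ composed with $A$ recovers $|\Gamma|$ times the $A_{\widetilde G}$ action), but making it airtight when $A$ is non-symmetric (the case of extra automorphisms) needs the same care already flagged after Theorem~\ref{thm:main1}.
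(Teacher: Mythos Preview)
Your reduction strategy has a genuine gap at the very first step, and a second one at the heart of the spectral argument.

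\medskip

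\textbf{First gap: $\det(\widetilde H)$ need not be all of $(\Z/N\Z)^\times$.} You set $\widetilde H = H\cdot\{a\,\Id\}$ and claim $\det(\widetilde H)=(\Z/N\Z)^\times$. But $\det(a\,\Id)=a^2$, so $\det(\widetilde H)=\det(H)\cdot\big((\Z/N\Z)^\times\big)^2$, which is the whole group only when $\det(H)$ hits every square class. For $H=\{\Id\}$ and $N$ an odd prime this fails outright, and that is precisely the full-level case Theorem~\ref{thm:main2} is meant to cover. So Theorem~\ref{thm:main1} does not apply to $\widetilde G$ in general, and the reduction collapses.

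\medskip

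\textbf{Second gap: coverings go the wrong way for spectral bounds.} Even granting a connected Ramanujan base $\widetilde G$, the inclusion of spectra for a graph covering $\pi\colon G\to\widetilde G$ is $\mathrm{Spec}(A_{\widetilde G})\subseteq \mathrm{Spec}(A_G)$, not the reverse: pullback $\pi^*$ embeds each eigenspace of $A_{\widetilde G}$ into one of $A_G$, but the ``new'' eigenvalues of $A_G$ (those in the nontrivial $\chi$-isotypic pieces) are not controlled by $A_{\widetilde G}$ at all. Your averaging $\sum_\gamma\gamma$ only sees the trivial-character piece. So the assertion ``the absolute values [of the twisted operators] are bounded by those of $A_{\widetilde G}$'' is exactly the content of the theorem, not a consequence of it. Any rigorous bound on those twisted pieces would require input of the same depth as the paper's own argument.

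\medskip

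\textbf{What the paper does instead.} The paper does \emph{not} reduce Theorem~\ref{thm:main2} to Theorem~\ref{thm:main1}; both are proved simultaneously from Theorem~\ref{thm:spet1}. Diagonalizability and the angle constraint come from Proposition~\ref{prop:diag}: the Hermitian form~\eqref{eq:Hermitian} gives $A^*=\langle\ell^{-1}\rangle A$, so $A$ is normal and $A^{k'}$ is self-adjoint, forcing angles in $\Z\tfrac{\pi}{k'}$. The eigenvalues $(\ell{+}1)\zeta$ are isolated in Proposition~\ref{prop:spet} via the Weil-invariant map $w_{i,*}$. The hard part---the bound on $\Ker(w_{i,*})$ in Theorem~\ref{thm:spet1}---is obtained by identifying $\Ker(w_{i,*})$ with (a piece of) the character group of the toric part of $\Pic^0(M_{H_p,\F_p})$ (Theorem~\ref{thm:comparison}), transporting to characteristic zero via N\'eron models (Lemmas~\ref{lem:Neron_exact}, \ref{lem:fibra}), and then invoking Eichler--Shimura plus the Weil bounds (Theorems~\ref{Eichler-Shimura}, \ref{thm:bound}); the strict inequality and the explicit exponent $-2dk'+1$ come from the good-reduction argument at $\ell$ and Lemma~\ref{lem:bound_poly}. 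Connectedness of each $G_i$ (Corollary~\ref{cor:conn_comp}) is then read off from the multiplicity-one of $\ell{+}1$, not from any transitivity on $R_H$; transitivity is used only for the ``all $G_i$ isomorphic'' clause.
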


Theorem \ref{thm:main2} applies to the case of full level structure, where the adjacency matrix has non-real eigenvalues. In this case $\mathcal N_H=\GLmod{N}$, hence all connected components are isomorphic. We also have that $k=k'$ is the multiplicative order of $\ell$ in $(\Z/N\Z)^{\times}$, and the number of connected components is $n=\varphi(N)/k$, where $\varphi$ is the Euler totient function.

We can also apply Theorem \ref{thm:main2} to the isogeny graphs with torsion point level structure, namely $H=\{\smt *0*1\}$. 
In this case $\det(H)= (\Z/N\Z)^\times$, hence $G$ is connected and $k=1$. One might have $k'>1$, and indeed Corollary \ref{cor:asy_dist} implies that for $p$ big enough the adjacency matrix has non-real eigenvalues.

\begin{Remark}[Multipartite graphs]\label{rem:partition}

Given a finite connected directed graph $G=(V,E)$, a $k$-\emph{multipartition} is a partition of $V$ into $k$ disjoint subsets $V_j$ such that vertices of $V_j$ are connected only to vertices of $V_{j+1 \pmod k}$. A 2-partite graph is called bipartite. When $G$ is $d$-regular, this is related to the spectrum of the adjacency matrix $A$ of $G$ in the following way.  Let $u_j=\sum_{v\in V_j}v$, and $U$ the span of $\{u_1,\dots , u_k\}$ in $\C^V$. Then $U$ is stabilized by $A^t$, and $A^t$ restricted to $U$ acts as $d$ times a cyclic permutation, hence the spectrum of $A$ contains $d$ times the group of $k$-th roots
of unity. 

The Weil invariant gives a $k$-multipartion of the vertices of $G_i$, and by the above discussion this is a $k$-multipartion of $G_i$; the existence of this partition implies the existence of the eigenvalues $(\ell+1)\zeta$'s appearing in the statement of Theorem \ref{thm:main2}. Theorem \ref{thm:main2} also says that there are no other eigenvalues of absolute value equal to $\ell+1$, hence this partition can not be refined.
\end{Remark}

\subsubsection*{Organization of the paper} In Section \ref{S:Weil_pairing}, we reduce the proof of Theorems \ref{thm:main1} and \ref{thm:main2} to Theorem \ref{thm:spet1}. Along the way, we prove a few elementary results about isogeny graphs. 

Sections \ref{S:modular_curve} and \ref{S:relation} are devoted to set-up a more general framework to study isogeny graphs, and to prove Theorem \ref{thm:spet1} (= Theorem \ref{thm:spet2}). These sections rely on more advanced algebraic geometry notions. 
 
 In Section \ref{s:mocular_forms} we develop the connection between isogeny graphs and modular forms. This connection is of independent interest, and it is used to prove Corollary \ref{cor:asy_dist}.
 Throughout the paper, we keep track of automorphisms of the graphs. We relate them to automorphisms of modular curves and modular forms, such as the Fricke and Atkin-Lehner automorphisms. These results are not used in the proof of our main theorems, but we think they could be useful for further developments.

\subsection{Ramanujan graphs and expander sequences}\label{S:graph}

In this section we discuss the implication of our results from the point of view of graph theory. We refer the reader to the textbooks \cite{DSV,HLW,Trev}, the papers \cite{Bordenave,Friedman} and references therein for detailed discussions of the concepts introduced here.

Let $G$ be a $d$-regular non-bipartite (see Remark \ref{rem:partition}) connected finite graph with symmetric adjacency matrix $A$. The spectrum of $A$ contains the eigenvalue $d$, called trivial eigenvalue, with multiplicity one. All other eigenvalues are called non-trivial and are contained in the interval $(-d,d)$ (\cite[Proposition 1.1.2]{DSV}). The \emph{spectral gap} is the minimum of $d-|\lambda|$, where $\lambda$ runs among all non-trivial eigenvalues.
Notice that our main results give lower bounds on the spectral gap
of isogeny graphs.
Lower bounds on the spectral gap can be used, among the other things, to bound the diameter, the expansion constant and the mixing time of a graph, see \cite{DSV,HLW}. 

A graph is called \emph{Ramanujan} if all non-trivial eigenvalues of $A$ are contained in the Hasse interval $[-2\sqrt{d-1},2\sqrt{d-1}]$. The Alon-Boppana inequality says that there exists a constant $c_d>0$, depending only on $d$, such that for every $d$-regular graphs with $n$ vertices there exists a non-trivial eigenvalue with absolute value at least $2\sqrt{d-1}-c_d/(\log(n))^2$; in a more colloquial language, it says that Ramanujan graphs have the largest possible spectral gap among big graphs (\cite[Section 5.2]{HLW}, \cite[Section 1.3]{DSV}, \cite[Introduction]{Bordenave}).

A key result, conjectured by Alon and proven in \cite{Friedman} and \cite{Bordenave}, says the following: fixing a strictly positive real number $\varepsilon$, using the uniform distribution on the set of $d$-regular simple graphs with $n$ vertices, the probability that all non trivial eigenvalues of the adjacency matrix lie in the interval $[-2\sqrt{d-1}-\varepsilon,2\sqrt{d-1}+\varepsilon]$ tends to $1$ when $n$ tends to infinity. In other words, a random graph is close to be Ramanujan. In \cite{HMY}, it is shown that, for $n$ big enough, approximately 69\% of the
$d$-regular 
graphs are Ramanujan.
It is however challenging to construct  examples of Ramanujan graphs, as discussed for instance in \cite[Introduction]{Bordenave}. 
Our results give the following.

\begin{Corollary}\label{cor:ram}
With the notation of Definition \ref{def:graph}, if $p$ is congruent to 1 modulo 12,  $H$ contains $\ell\cdot \Id$, and $\det(H)=(\Z/N\Z)^\times$, then the isogeny graph $G(p,\ell,H)$ is a Ramanujan graph.    
\end{Corollary}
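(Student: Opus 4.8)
The plan is to deduce the statement from Theorem~\ref{thm:main2}, once we know that the adjacency matrix $A$ is symmetric; the congruence $p\equiv 1 \pmod{12}$ is there precisely to guarantee that symmetry, which is the only substantive point.

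First I would dispose of automorphisms. From $p\equiv 1\pmod{12}$ we get $p\equiv 1\pmod 3$ and $p\equiv 1\pmod 4$, so the elliptic curves with $j$-invariant $0$ and $1728$ are ordinary (they are supersingular exactly when $p\equiv 2\pmod 3$, resp. $p\equiv 3\pmod 4$). Hence every supersingular $E/\ol{\F}_p$ has $\Aut(E)=\{\pm 1\}$, and therefore every vertex $(E_i,\phi_i)$ of $G=G(p,\ell,H)$ has $\Aut(E_i,\phi_i)=\{\pm\Id\}$ if $-\Id\in H$ and $\Aut(E_i,\phi_i)=\{\Id\}$ otherwise; in either case $|\Aut(E_i,\phi_i)|$ is independent of $i$.

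Next, the key step: $A$ is symmetric. A degree-$\ell$ morphism $\beta\colon(E_i,\phi_i)\to(E_j,\phi_j)$ is an isogeny with $\phi_j^{-1}\beta\phi_i\in H$ (acting on $N$-torsion, where $\beta$ is an isomorphism as $\gcd(\ell,N)=1$); since $\hat\beta=\ell\cdot\beta^{-1}$ as a map $E_j[N]\to E_i[N]$ and $\ell\,\Id\in H$ by hypothesis, $\phi_i^{-1}\hat\beta\phi_j=(\ell\,\Id)(\phi_j^{-1}\beta\phi_i)^{-1}\in H$, so the dual isogeny $\hat\beta$ is a degree-$\ell$ morphism $(E_j,\phi_j)\to(E_i,\phi_i)$. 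As $\hat{\hat\beta}=\beta$, the map $\beta\mapsto\hat\beta$ is a bijection between degree-$\ell$ morphisms $(E_i,\phi_i)\to(E_j,\phi_j)$ and degree-$\ell$ morphisms $(E_j,\phi_j)\to(E_i,\phi_i)$. Since $\Aut$ of the target acts freely by post-composition on morphisms, and the edges into a vertex are exactly the orbits of this action, we get $\#\{\text{morph. }(E_i,\phi_i)\to(E_j,\phi_j)\}=a_{ji}\,|\Aut(E_j,\phi_j)|$ and similarly $a_{ij}\,|\Aut(E_i,\phi_i)|$ for the morphisms in the other direction; the bijection gives $a_{ij}\,|\Aut(E_i,\phi_i)|=a_{ji}\,|\Aut(E_j,\phi_j)|$, and by the previous paragraph all these orders coincide, so $a_{ij}=a_{ji}$.

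Finally I would feed the remaining hypotheses into Theorem~\ref{thm:main2}. Because $\det H=(\Z/N\Z)^\times$ the set $R_H$ is a single point, so $\ell$ acts on it with one orbit and $k=1$; because $\ell\,\Id\in H$ we have $k'=1$. Theorem~\ref{thm:main2} then yields that $G$ is connected, that $\ell+1$ is an eigenvalue of multiplicity one, and that every other eigenvalue is real (angle in $\Z\pi$) of absolute value $<2\sqrt\ell$. Combined with the symmetry of $A$, $G$ is a connected $(\ell+1)$-regular graph with symmetric adjacency matrix; it is non-bipartite because $-(\ell+1)$ is not an eigenvalue (all non-trivial ones have modulus $<2\sqrt\ell<\ell+1$ for $\ell\ge 2$); and since the Hasse interval of an $(\ell+1)$-regular graph is $[-2\sqrt\ell,2\sqrt\ell]$, it contains every non-trivial eigenvalue. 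Hence $G$ is Ramanujan. The genuinely new ingredient here is the symmetry of $A$, equivalently the absence of vertices with extra automorphisms, which is exactly what $p\equiv 1\pmod{12}$ provides; the rest is a direct reading of Theorem~\ref{thm:main2}.
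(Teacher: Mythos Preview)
Your proof is correct and follows the same route the paper has in mind: the paper does not spell out a proof of this corollary, but the sentence following it indicates that one should invoke Proposition~\ref{prop:diag} for the symmetry of $A$ (which is proved there via exactly the dual-isogeny computation you carry out by hand) and then read off the spectral bound from the main theorems. Your choice to cite Theorem~\ref{thm:main2} rather than Theorem~\ref{thm:main1} is the right one, since the hypotheses of the corollary do not require $H$ to contain all scalar matrices, and your verification that $k=k'=1$ and that $G$ is non-bipartite is exactly what is needed to match the paper's definition of Ramanujan.
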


The first three conditions guarantee that the adjacency matrix is symmetric, see Proposition \ref{prop:diag}; if we drop them, our main results say that the graphs are Ramanujan in some generalized sense. Corollary \ref{cor:ram} can be applied for instance to isogeny graphs with Borel level structure.

\medskip

With the same spirit, people have looked at \emph{expander sequences of graphs}. A sequence of $d$-regular connected finite graphs $G_i$ is an expander sequence if the adjacency matrices $A_i$ are symmetric, the number of vertices tends to infinity, and there exists a constant $\varepsilon>0$ independent of $i$ such that the spectral gap of $G_i$ is at least $\varepsilon$ for every $i$. We again refer to \cite{DSV,HLW} and references therein for a detailed discussion. Observe that in loc. cit. the definition is given in terms of the expansion constant; our definition in terms of spectral gap is equivalent to the classical one because of the Cheeger inequality (\cite[Sections 4.4 and 4.5]{HLW} and \cite[Section 1.2]{DSV}). The importance of constructing explicit examples is highlighted for instance in \cite[Section 2.1]{HLW} or \cite{Ram-LPS}. The following corollary 
of Theorem \ref{thm:main2} and Proposition \ref{prop:diag} provides many new examples of expander sequences of graphs.

\begin{Corollary}\label{cor:expander}
Fix a prime $\ell$ and a sequence of graphs $\{G_i\} = \{G(p_i,\ell,H_i)\}$ with $p_i\equiv 1 \pmod{12}$ and $H_i<\GLmod{N_i}$ a subgroup containing $\ell$, with determinant $\det(H_i) =(\Z/N_i\Z)^\times$, and such that $[\GLmod{N_i}:H_i]\cdot p_i$ tends to infinity. Then, $\{G_i\}$ is an expander sequence of graphs.

\end{Corollary}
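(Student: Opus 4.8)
The plan is to deduce this directly from Corollary \ref{cor:ram} together with a count of the number of vertices of each $G_i$. First I would observe that the hypotheses $p_i \equiv 1 \pmod{12}$, $\ell \in H_i$, and $\det H_i = (\Z/N_i\Z)^\times$ are precisely the hypotheses of Corollary \ref{cor:ram}, so each $G_i = G(p_i,\ell,H_i)$ is a Ramanujan graph: it is $(\ell+1)$-regular, connected (by Theorem \ref{thm:main1}, since $\det H_i = (\Z/N_i\Z)^\times$ and $\ell \in H_i$ forces the scalar matrices hypothesis to be checked — here one uses that $\ell$ scalar lies in $H_i$, and more precisely Proposition \ref{prop:diag} gives symmetry of the adjacency matrix $A_i$ under exactly these three conditions), and all non-trivial eigenvalues lie in $[-2\sqrt{\ell},2\sqrt{\ell}]$. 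In particular the adjacency matrices are symmetric, which is one of the two requirements in the definition of an expander sequence.

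Next I would produce a uniform spectral gap. By Theorem \ref{thm:main1} the non-trivial eigenvalues of $A_i$ are in fact bounded in absolute value by $2\sqrt{\ell} - (4\sqrt{\ell})^{-2|V_i| - 1}$, where $V_i$ is the vertex set of $G_i$; this is strictly less than $2\sqrt{\ell} < \ell + 1$, but the bound degrades as $|V_i| \to \infty$, so it does not by itself give a uniform gap. The clean way around this is to use the Ramanujan bound instead: since $G_i$ is Ramanujan, every non-trivial eigenvalue $\lambda$ satisfies $|\lambda| \le 2\sqrt{\ell}$, hence the spectral gap of $G_i$ is at least $(\ell+1) - 2\sqrt{\ell} = (\sqrt{\ell} - 1)^2 > 0$, a constant $\varepsilon := (\sqrt{\ell}-1)^2$ depending only on $\ell$ and in particular independent of $i$. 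This handles the spectral gap condition.

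It then remains to check that the number of vertices of $G_i$ tends to infinity. The vertex set of $G(p_i,\ell,H_i)$ consists of isomorphism classes of supersingular elliptic curves over $\ol{\F_{p_i}}$ equipped with a level $H_i$ structure. The number of supersingular $j$-invariants in characteristic $p_i$ grows like $p_i/12$, and the number of level $H_i$ structures on a given curve is, up to bounded automorphism factors, $[\GLmod{N_i} : H_i]$; hence $|V_i|$ is bounded below by a constant times $[\GLmod{N_i}:H_i]\cdot p_i$, which tends to infinity by hypothesis. (One can be slightly careful about curves with extra automorphisms, but since $p_i \equiv 1 \pmod{12}$ the curves with $j=0$ and $j=1728$ are supersingular with the usual automorphism groups, contributing only a bounded correction.) Combining the three points — $(\ell+1)$-regularity with symmetric adjacency matrix, uniform spectral gap $\varepsilon = (\sqrt\ell - 1)^2$, and $|V_i| \to \infty$ — shows that $\{G_i\}$ is an expander sequence of graphs.

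The main obstacle is largely bookkeeping rather than conceptual: one must make sure the definition of expander sequence used here (spectral gap bounded below, symmetric adjacency matrices, $|V_i| \to \infty$) is matched exactly, and one must justify that the connectedness and symmetry hypotheses of Corollary \ref{cor:ram} really are implied by the stated conditions on $p_i$, $\ell$, and $H_i$ via Proposition \ref{prop:diag} and Theorem \ref{thm:main1}. The vertex count $|V_i| \gtrsim [\GLmod{N_i}:H_i]\cdot p_i$ should be extracted cleanly, perhaps by referring to the mass formula for supersingular curves together with the description of the fiber of the forgetful map from level-$H_i$ structures.
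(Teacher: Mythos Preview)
Your argument is correct and is precisely the derivation the paper intends: the corollary is stated without an explicit proof, as an immediate consequence of Corollary~\ref{cor:ram} (equivalently, of Theorem~\ref{thm:main1}/\ref{thm:main2} and Proposition~\ref{prop:diag}) together with the growth of the vertex set.

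Two small corrections. First, when $p_i\equiv 1\pmod{12}$ the curves with $j=0$ and $j=1728$ are \emph{ordinary}, not supersingular; this is exactly why every supersingular curve then has automorphism group $\{\pm 1\}$, so no ``bounded correction'' is needed at all. Second, connectedness of $G_i$ does not come from the scalar-matrices hypothesis of Theorem~\ref{thm:main1} (which need not hold here, since only $\ell\cdot\Id\in H_i$ is assumed); it follows directly from $\det H_i=(\Z/N_i\Z)^\times$ via Theorem~\ref{thm:main2}, since then $R_{H_i}$ is a single point and hence $n=1$.
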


The first example where Corollary \ref{cor:expander} can be applied is the classical sequence of isogeny graphs: $N_i=1$ for every $i$, and $p_i$ grows. New examples are for instance when $p_i$ is fixed and $[\GLmod{N_i}:H_i]\to\infty$, which happens e.g. if $N_i$ grows, and $H_i$ is of a fixed type such as Borel or Cartan; or when $p_i$ grows, $N_i$ and $H_i$ can be anything.

Again, if we drop the condition that $p_i$ is congruent to 1 modulo 12, and that $H_i$ contains $\ell$, then the adjacency matrix might not be symmetric and the sequence is expander in a generalized sense.

\medskip

Consider the largest number $\eta=\eta(p,\ell,H)$ such that the non-trivial eigenvalues of the adjacency matrix of $G(p,\ell,H)$ are contained in the Hasse interval shrunk by $\eta$, i.e. the interval $[-2\sqrt{\ell}+\eta,2\sqrt{\ell}-\eta]$. In other words, we are interested in the gap 
\begin{equation}\label{eq:gap}
\eta(p,\ell,H):=2\sqrt{\ell}-\max_{\lambda}|\lambda|\,,
\end{equation}
where the maximum is taken over all non-trivial eigenvalues of the adjacency matrix of $G(p,\ell,H)$. 
The estimates in our Theorems \ref{thm:main1} and \ref{thm:main2} can be rephrased as lower bounds on $\eta$.  Alon-Boppana inequality implies that there is a constant $c_{\ell}$ which depends only on $\ell$ such that $\eta(p,\ell,H)\leq c_\ell \log(|V(p,\ell,H)|)^{-2}$, where $V(p,\ell,H)$ is the set of vertexes of the graph $G(p,\ell,H)$.  Numerical experiments from Appendix \ref{appB} shows that our bounds is not sharp. Let us formulate the following general questions, which are open and interesting already in the case without level structure.

\begin{question}\label{q:gap}
With the notations of Corollary \ref{cor:ram}, fix $\ell$, $N$, and a subgroup $H$ of level $N$. Let $v(p)$ be the number of vertices of $G(p,\ell,H)$, and $\eta(p)=\eta(p,\ell,H)$. What are good lower bounds for $\eta(p)$? What is the asymptotic of $\eta(p)$? Does this sequence achieve the Alon-Boppana bound (i.e.  $\lim_{p\to \infty}\eta(p)\log(v(p))^{2}$ is equal to a positive constant)? 

\end{question}

\subsubsection{Non-backtracking random walks and mixing time} \label{sec:mix}
Random walks can be used to generate probability distributions on the vertices of a graph: one starts from a given distribution $\pi$ and, after a length $k$ random walk, the end vertex has distribution $\pi^{(k)}$. A distribution is stationary if it  does not change after random walk. By general graph theory, if the graph is connected and not multipartite, the sequence $\pi^{(k)}$ converges to the unique stationary distribution; a general references is \cite{HLW}. The mixing time measures
the speed of convergence, i.e. the number $k$ of steps such that $\pi^{(k)}$ is distant at most a certain $\varepsilon$ from the stationary distribution.

Motivated by applications to isogeny based cryptography, in Proposition \ref{mixing_times} we give an upper bound for the mixing time of non-backtracking random walks on isogeny graphs.

Let us first recall the definition of non-backtracking walk on isogeny graphs. Assume that $\ell \cdot \Id \in H$; for each edge $(E,\phi) \overset e\to (E', \phi')$ we can choose an edge
$(E',\phi') \overset {\ol e}\to (E, \phi)$ by choosing an isogeny $\alpha$ representing $e$ and taking $\ol e = [\hat \alpha]$, where $\hat \alpha$ is the dual isogeny of $\alpha$. A walk is non-backtracking if an edge $e$ is never followed by $\ol e$. 

(Note that if $j(E') = 0,1728$ the definition of $\ol e$ is not necessarily canonical: if $u \in \Aut (E',\phi')$ is not $u \pm 1$, the isogeny $\alpha \circ u$ still represents $e$, but $\hat\alpha$ and $\widehat{\alpha \circ u} = \hat{\alpha} \circ \hat u$ might have different kernels, hence correspond to different edges; more details in \cite[Section 3.3]{noi})



In this set-up, if we further assume that $\det(H)=(\Z/N\Z)^\times$ so that $G(p,\ell,H)$ is connected and not multipartite, the distribution $\pi^{(k)}$ obtained after a non-backtracking random walk always converges to a stationary distribution $s$ when $k$ goes to infinity. The Borel case is treated in details in \cite{noi}: as consequence of the Ramanujan property of the graph, \cite[Theorem 11]{noi} shows that the speed of convergence of $\pi^{(k)}$ to $s$ is $O(\tfrac{k}{\ell^{k/2}})$, and compute the precise constants. 

In our Theorem \ref{thm:main2} we show that the non-trivial part of the spectrum is contained in an interval $[-2\sqrt \ell +\eta, 2\sqrt\ell-\eta]$ for $\eta = (4\sqrt\ell)^{-2|V|+3}$, see also \eqref{eq:gap}, Question \ref{q:gap} and Appendix  \ref{appB}. This result is stronger than the Ramanujan property, and it implies that the rate of convergence of $\pi^{(k)}$ to the stationary distribution is $O(\tfrac{1}{\ell^{k/2}})$. We give a precise statement.


\begin{Proposition}\label{mixing_times}
In the notation of Definition \ref{def:graph}, suppose that $H$ contains $\ell\cdot \Id$ and that $\det(H) = (\Z/N\Z)^{\times}$. Let $G$ be the graph $ G(p,\ell,H)$ with set of vertices $V$. 

Let $s$ be the probability distribution on $V$, where each vertex $(E,\phi)$ has probability  proportional to $|\Aut(E,\phi)|^{-1}$. 
Then $s$ is a stationary distribution and for each probability distribution $\pi$ on $V$, the distribution $\pi^{(k)}$ defined above converges to $s$.  More precisely for $p\neq 2,3$ we have the following inequality
$$
d_{TV}(\pi^{(k)},s)\leq \frac14 \sqrt{(p{-}1)[\GLmod{N}:H]} \cdot \min\left( \left( \tfrac{\eta}{\sqrt\ell}-\tfrac{\eta^2}{4\ell} \right)^{-\tfrac12}, \tfrac{(\ell+1)(k+1)-2}{(\ell+1)} \right) \cdot \frac{1}{\sqrt{\ell^k}}\,,
$$  
where $d_{TV}(p_1,p_2) = \tfrac 12\sum_{v\in V}|p_1(v)-p_2(v)|$ is the total variation distance and $\eta>0$ is such that all the non-trivial eigenvalues of $G$ lie in $[-2\sqrt \ell +\eta, 2\sqrt\ell-\eta]$.

For $p=2$, respectively $p=3$, the above inequality is true after substituting ``\,$\le \tfrac 14 \ldots$'' with ``\,$\le \tfrac 12 \ldots$'', respectively with ``\,$\le \tfrac 1{\sqrt 2} \ldots$'.
\end{Proposition}

The proof follows the same strategy as in \cite[Theorem 11]{noi}, the main difference is that we bound the value in Equation (5) in loc. cit. not only as in Equation (6), but also using the inequalities $|\sin((k+1)\theta)|\le 1$, $|\sin((k-1)\theta)|\le 1$  and 
$$|\sin(\theta)| = \sqrt{1-(\cos\theta)^2} = \sqrt{1-\tfrac{\lambda_i^2}{4\ell}} \ge \sqrt{1-\tfrac{(2\sqrt\ell-\eta)^2}{4\ell}}\,.$$

To connect with the notation in  \cite[Theorem 11]{noi}, the quadratic form $Q$ in that theorem is the same as $\frac 12 H$ here (see Equation \eqref{eq:Hermitian}) and the constants $K,M$, related to $||
\cdot ||_{TV}$ and $Q$, can be computed or bounded as in \cite[Theorem 7]{noi}:
$$
K = \left(\tfrac{p-1}{12}[\GLmod{N}:H]\right)^{-1/2}\,, \qquad 
M \le \left( \max_{i}  \tfrac{|\Aut(E_i,\phi_i)|}{2} \right)^{1/2} \, ,
$$
so that $M \le \sqrt 3$ for $p\neq 2,3$, $M \le \sqrt 6$ for $p=3$ and $M \le \sqrt {12}$ for $p=2$.

Automophisms

\subsubsection{Asymptotic distribution of the eigenvalues}

Let us now look at the distribution of all eigenvalues, the bulk of the spectrum following the terminology of \cite[Section 7.1]{HLW}, see also \cite[Section 8]{Serre}. Given a sequence $G_i$ of graphs as above and an angle $\theta$, we introduces the probability measure 
$$\mu(G_i,\theta) := \frac{1}{|\sigma(A_i,\theta)|}\sum_{\lambda \in \sigma(A_i,\theta)}\delta_{\lambda}\,,$$
where $\sigma(A_i,\theta)$ is the set of eigenvalues of the adjacency matrix $A_i$ with phase $\theta$ or $\theta+\pi$, 
 and $\delta_{\lambda}$ is a Dirac mass at $\lambda$; of course the definition makes sense only if $|\sigma(A_i,\theta)|\neq 0$. The limits of the sequences $\{\mu(G_i,\theta)\}$, if they exist, give the asymptotic distribution of the spectrum of $G_i$. If all eigenvalues of $A_i$ are real, we omit the dependence from $\theta$.

Let us also introduce the Kesten–McKay measure (also known as Kesten–McKay law or distribution)
\begin{equation}\label{eq:mckay}
	\mu_{\ell}:=\frac{\ell+1}{\pi}\frac{\sqrt{\ell-x^2/4}}{\ell(\ell^{1/2}+\ell^{-1/2})^2-x^2}dx
\end{equation}
supported in the Hasse interval $[-2\sqrt{\ell},2\sqrt{\ell}]$; it is the asymptotic distribution of the eigenvalues of a random sequence of ($\ell{+}1)$-regular graphs with increasing number of vertices, see \cite{McKay}, \cite[Theorem 7.2]{HLW} and references therein.

The following result, which relies on the theory of modular forms, is a corollary of 
Theorems \ref{thm:modular_special} and \ref{thm:trace_asymptotic}. 
\begin{Corollary}\label{cor:asy_dist} Fix a subgroup $H < \GLmod{N}$, a prime number $\ell$ coprime with $N$, and let $\{p_i\}$ be an increasing sequence of prime numbers not dividing $N\ell$. Let $G_i = G(p_i,\ell,H)$,
\begin{itemize}

\item If $H =\{\Id\}$, i.e. $G_i$ are isogeny graphs with full level structure, given $k'$ the order of $\ell$ in $(\Z/N\Z)^\times$, 
then for every $\theta$ in $\frac{\pi}{k'}\mathbb{Z}$ we have 
$$
\lim_{i\to \infty}\mu(G_i,\theta)= e^{i \theta}\mu_{\ell} \ ,
$$
and for all other choices of $\theta$ there are no eigenvalues.

\item 
If $H$ is the Borel subgroup, then all eigenvalues are real and 
$$
\lim_{i\to \infty}\mu(G_i)=\mu_{\ell}\ ,
$$ 
\item If $H= \{ \smt *0*1\}$, i.e. the $G_i$'s are graphs with torsion point structure, denoting $k'$ the order of $\ell$ in $(\Z/N\Z)^\times$, then for every $\theta$ in $\frac{\pi}{k'}\mathbb{Z}$ we have 
$$
\lim_{i\to \infty}\mu(G_i,\theta) = e^{i\theta}\mu_{\ell}\ ,
$$ 
and for all other choices of $\theta$ there are no eigenvalues.

\item 
If $H$ is a non-split Cartan, 
then all eigenvalues are real and 
$$
\lim_{i\to \infty}\mu(G_i)=\mu_{\ell}\ .
$$ 
\end{itemize}
\end{Corollary}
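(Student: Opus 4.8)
The plan is to pass, via the modular-forms dictionary, to an equidistribution statement for Hecke eigenvalues and then conclude by the method of moments. Consider first $H=\{\Id\}$. By Theorem~\ref{thm:modular_special} the adjacency matrix $A_i$ of $G_i=G(p_i,\ell,H)$ is conjugate to the Hecke operator $T_\ell$ on a space of weight-$2$ modular forms with full level-$N$ structure and $\Gamma_0$-structure at $p_i$; this space splits $T_\ell$-equivariantly as $\mathrm{Eis}_i\oplus S_i$, where $\dim\mathrm{Eis}_i$ is bounded independently of $i$ and accounts exactly for the eigenvalues $(\ell+1)\zeta$ of Theorem~\ref{thm:main2}, while $S_i$ is a space of cusp forms with $\dim S_i\asymp p_i$, so $\dim S_i\to\infty$. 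The non-trivial eigenvalues of $A_i$, counted with multiplicity, are therefore the Hecke eigenvalues $a_\ell(f)$ as $f$ ranges over an eigenbasis of $S_i$.

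Next I would record how the phases arise. Decompose $S_i=\bigoplus_\chi S_i^\chi$ by the nebentypus character $\chi$ of $(\Z/N\Z)^\times$ (the character at $p_i$ is trivial because of the $\Gamma_0(p_i)$-structure). For $f\in S_i^\chi$ one has $a_\ell(f)=\alpha_f+\beta_f$ with $\alpha_f\beta_f=\ell\,\chi(\ell)$ and $|\alpha_f|=|\beta_f|=\sqrt\ell$ (Weil bounds for weight $2$, equivalently the $<2\sqrt\ell$ bound of Theorem~\ref{thm:main2}); writing $\chi(\ell)=e^{2i\theta}$ this forces $e^{-i\theta}a_\ell(f)\in[-2\sqrt\ell,2\sqrt\ell]$. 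As $\chi$ runs over all characters mod $N$, $\chi(\ell)$ runs exactly over the group $\mu_{k'}$ of $k'$-th roots of unity (here $k'$ is the order of $\ell$ in $(\Z/N\Z)^\times$, matching the $k'$ of Theorem~\ref{thm:main2}), so every eigenvalue of $A_i$ has phase in $\Z\tfrac{\pi}{k'}$, and for $\theta\notin\Z\tfrac{\pi}{k'}$ there are no eigenvalues. This also identifies the phase line $e^{i\theta}\R$ with the span of the $S_i^\chi$ for which $\chi(\ell)=e^{2i\theta}$, i.e. with the multipartition of Remark~\ref{rem:partition}.

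Fix $\theta=m\pi/k'$ and $\zeta=e^{2i\theta}$. For each $n\geq 1$, Theorem~\ref{thm:trace_asymptotic} — the Eichler–Selberg trace formula, whose identity term is $\asymp[\SLZ:\Gamma]\asymp p_i$ and, after expanding $T_\ell^n$ through the Hecke recursion $T_\ell T_{\ell^{j}}=T_{\ell^{j+1}}+\ell\langle\ell\rangle T_{\ell^{j-1}}$, reproduces the number of closed walks of length $n$ in the $(\ell+1)$-regular tree, the remaining elliptic, hyperbolic and parabolic terms being $O_{n,N,\ell}(1)$ — gives
\[
\frac{1}{\sum_{\chi(\ell)=\zeta}\dim S_i^\chi}\sum_{\chi(\ell)=\zeta}\ \sum_{f\in S_i^\chi}\bigl(e^{-i\theta}a_\ell(f)\bigr)^n\ \xrightarrow[i\to\infty]{}\ \int x^n\,d\mu_\ell(x).
\]
(For $\zeta=1$ this is Serre's equidistribution theorem for $T_\ell$ on weight-$2$ forms with level tending to infinity; in general it is the same computation on the $\chi$-isotypic part, where $\langle\ell\rangle$ acts by the scalar $\zeta$.) The measure $e^{-i\theta}\!\cdot\mu(G_i,\theta)$ differs from the left-hand side only by the $O(1)$ Eisenstein eigenvalues, so has the same limiting moments; since all these measures are supported on the fixed compact $[-2\sqrt\ell,2\sqrt\ell]$ and $\mu_\ell$ is determined by its moments, Weierstrass approximation gives $e^{-i\theta}\!\cdot\mu(G_i,\theta)\to\mu_\ell$, i.e. $\mu(G_i,\theta)\to e^{i\theta}\mu_\ell$, which is the first bullet. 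The remaining cases repeat this with the appropriate level structure: for $H=\{\smt *0*1\}$ one has $\Gamma_H=\Gamma_1(N)$ and again every character mod $N$ occurs, giving the identical conclusion; for $H$ Borel, $\Gamma_H=\Gamma_0(N)$, only $\chi=1$ occurs, all $a_\ell(f)$ are real, and $\mu(G_i)\to\mu_\ell$; for $H$ a non-split Cartan, $H$ contains the scalars, so $\ell\,\Id\in H$, hence $k'=1$ and, by Theorem~\ref{thm:main2}, all eigenvalues are real, and the argument above yields $\mu(G_i)\to\mu_\ell$.

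Granting Theorems~\ref{thm:modular_special} and \ref{thm:trace_asymptotic}, the main difficulty is the bookkeeping tying the two together: one must check that the Weil-invariant multipartition of Remark~\ref{rem:partition} coincides with the decomposition by nebentypus — precisely, that the eigenvalues on the line $e^{i\theta}\R$ are exactly those coming from characters with $\chi(\ell)=e^{2i\theta}$ — which is the delicate point when $k'>1$; and one must ensure the trace-formula error is $o(\dim S_i)$ uniformly in $i$, which relies on the $\Gamma_0(p_i)$-structure keeping the class-number contributions bounded as $p_i\to\infty$. The analytic core, namely that the main term of the trace reconstructs the moments of the Kesten–McKay law $\mu_\ell$, is exactly Serre's classical computation and presents no new obstacle.
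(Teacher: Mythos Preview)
Your approach is essentially the paper's: combine Theorem~\ref{thm:modular_special} with Theorem~\ref{thm:trace_asymptotic} (which is Serre's equidistribution), and the paper itself says no more than this. Your expansion of the details is mostly correct, but there is one oversight in the full-level case.

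For $H=\{\Id\}$, Theorem~\ref{thm:modular_special} does \emph{not} say that the adjacency matrix is conjugate to $\tT_\ell$; it says $A$ acts as $\tT_\ell\otimes\sigma_\ell$ on $S'\otimes_\C\C^L$, where $\sigma_\ell$ is the cyclic shift on $\C^L$ for $L=\langle\ell\rangle\subset(\Z/N\Z)^\times$. The eigenvalues of $A$ on $\Ker(w_{i,*})$ are therefore products $a_\ell(f)\cdot\zeta$ with $\zeta\in\mu_{k'}$, so the phase of an eigenvalue is the phase of $a_\ell(f)$ \emph{plus} the phase of $\zeta$. Your nebentypus computation correctly pins down the phase of $a_\ell(f)$, but the extra $\zeta$-twist must be tracked as well; the conclusion is unchanged (the twist permutes the rays $e^{i\theta}\R$ and $\mu_\ell=-\mu_\ell$), yet the bookkeeping is a bit more than you wrote. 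Relatedly, the Weil-invariant multipartition of Remark~\ref{rem:partition} is exactly the $\C^L$ tensorand (i.e.\ the $\sigma_\ell$ factor), \emph{not} the nebentypus decomposition of $S'$; these are two independent sources of phase which your last paragraph partially conflates. By contrast, for $H=\{\smt*0*1\}$ there is no $\sigma_\ell$, and your argument is on the nose.

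A minor point: Theorem~\ref{thm:modular_special} already identifies $\Ker(w_{i,*})$ with a space of $p$-new \emph{cusp} forms, so there is no Eisenstein piece to split off. The trivial eigenvalues $(\ell{+}1)\zeta$ sit in the complement of $\Ker(w_*)$ inside $\C^V$ and are accounted for by Proposition~\ref{prop:spet}, not by Eisenstein series.
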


It is instructive to note that Corollary \ref{cor:asy_dist} alone does not imply that all eigenvalues are contained in the Hasse interval: it does not prevent  a small number of eigenvalues to lie outside the support of the asymptotic distribution.

By general graph theory, Corollary \ref{cor:asy_dist} implies that $G_i$ has few cycles, more precisely the number of cycles of a fixed length divided by the number of vertices of $G_i$ tends to zero when $i$ tends to infinity, see \cite{McKay} and \cite[Theorem 10]{Serre}.

\subsection{Relation with isogeny based cryptography}\label{S:crypto}

Usually the security and sometime the design of protocols from isogeny-based cryptography relies on features of isogeny graphs. Often the security is related to the mixing time, the number of cycles, or to the spectral gap of the graphs. All these features can be studied looking at the spectrum of the adjacency matrix. We again refer to \cite{HLW} or other textbooks in Graph Theory or Markov Chains for a general discussion of this topic, see also our Section \ref{S:graph}

The first appearance of isogeny graphs in cryptography is the Charles--Goren--Lauter hash function \cite{CGL}, where the digest of a message is computed trough a walk on a classical isogeny graph. 

An important instance of isogeny based cryptography is the key exchange protocol SIDH \cite{SIDH}. In this protocol, the public key is a pair of vertices on the isogeny graph with full level structure
and the private key is a walk between them. This protocol has been broken \cite{CD, MM,Rob}: if $N$ is big enough with respect to the length of the walk, as in SIDH, there are efficient algorithms to find a path between the two vertices. If $N$ is small with respect to the length of the walk, still we do not know an efficient algorithm to find a path. 

Given the importance of the full level structure in the SIDH attacks, new light has been shed on the torsion and on how to leverage it: new key exchanges, some with the intent of ``fixing'' SIDH, have been proposed and once again public and private keys can be represented using a convenient isogeny graph $G(p,\ell,H)$, now with $H$ different from $\{\Id\}$ and $\GLmod{N}$.
For instance, in \cite{BorisM}, the group $H$ defining the level structure is the group of scalar matrices. In \cite{Festa} the authors use a commutative  $H$ to hide information:  they mainly use a split Cartan subgroup and they also propose the group of circulant invertible matrices; we notice that the non-split Cartan subgroup could be considered as a viable option. Another scheme leveraging early torsion attacks is proposed in \cite{Seta}. The article \cite{torsionDFP} discusses the security of various level structures in cryptographic protocols.

It is not known if there is some intrinsic property of the isogeny graphs which makes the path finding problem more difficult for some level structure rather than others. Let us recall that the efficiency of general path find algorithms on graphs depends on the spectral gap.

\medskip

From a different perspective, estimates on the mixing time of random walks on isogeny graph, as the one from Proposition \ref{mixing_times}, are used to ensure that certain schemes are secure. 
For example, in \cite{noi} non-backtracking random walks on the isogeny graph with Borel level structure are used to define a Zero Knowledge Proof. The length of the walk is a parameter of the scheme, let us denote it by $k$. As discussed in Section \ref{sec:mix}, these walks give a probability distribution $\pi^{(k)}$ on the graph. Following \cite{noi}, the statistical security of the Zero Knowledge Proof is determined by the total variation distance between $\pi^{(k)}$ and the stationary distribution: the closer they are, more statistically secure is the Proof. The analysis of the mixing time from \cite[Theorem 7]{noi} is thus used to give a precise relation between the length of the walk and the security of the Proof.


The last isogeny based protocol being submitted to the NIST for standardization is the signature scheme SQISign (see \cite{SQI1D} and the recent developments \cite{SQIsignHD:}, \cite{SQIsign2D-West}, \cite{SignBoris} and \cite{SQISign2D-East}). In SQISign, a random walk on an isogeny graph with trivial level structure is used to choose a supersingular elliptic curve. In the same fashion of the Zero Knowledge Proof discussed above, the length of the walk is related to the security of the scheme via an estimate of the mixing time. The estimate used in SQIsign is discussed in \cite[Lemma 20]{SQIsign2D-West} and \cite[Proposition 29]{SQIsignHD:}, which again rely on \cite[Theorem 11]{noi}. 

Proposition \ref{mixing_times} improves \cite[Theorem 7]{noi}; in principle, it could imply that to achieve a certain security level one needs shorter walks than the ones requested in the above cited papers, and this could improve the efficiency of the schemes. Looking at the commonly used parameters and the dependence on $p$ of the constants in Proposition \ref{mixing_times}, right now it seems that our result does not give relevant improvements. However, the numerical experiments from Appendix \ref{appB} suggest that the constants in Proposition \ref{mixing_times} could be improved.


\medskip

To conclude this section, we explain how Theorem \ref{thm:main2} implies Conjectures 1 and 2 from \cite{DG} (as long as the restriction from \cite[Section 4]{DG} about the Weil pairing is taken into account). These conjectures are about valid keys in isogeny based cryptography are true. Their proof will rely on the following corollary of Theorem \ref{thm:main2} applied to the case $H = \{\Id\} \times B_0(M)$ in $\GLmod{NM} = \GLmod{N} \times \GLmod{M}$ for $M,N$ coprime integers (we actually need only the description of the connected components of the isogeny graphs from Theorem \ref{thm:main2}).

\begin{Corollary}\label{cor:valid_keys}
    Let $p$, $\ell$, $N$ and $M$ be pairwise coprime numbers such that $p$ and $\ell$ are prime; let $(E,P,Q, C)$ and $(E',P',Q',C')$ be two supersingular elliptic curves over $\overline{\F}_p$ endowed with basis 
    of the $N$ torsion and cyclic subgroups 
    of order $M$. 
    
    Then, there exists $m\in \Z$ and an isogeny $\phi\colon E \to E'$ of degree $\ell^m$ such that 
    $\phi(P)=P'$, $\phi(Q)=Q'$ and $\phi(C)=C'$ 
    if and only if for some integer $k>0$ we have $w(P,Q)=w(P',Q')^{\ell^k},$
    where $w$ is the Weil pairing on the $N$-torsion.
\end{Corollary}
Let us now explain and prove the conjectures in \cite{DG}. Fix a supersingular elliptic curve $E$ with a basis $\{P,Q\}$ of the $N$-torsion. A valid degree insensitive public key is a supersingular elliptic curve with level $N$ structure $(E',P',Q')$ such that there exists $m\in \Z$ and an isogeny of degree $\ell^m$ $\phi\colon E\to E'$ such that $\phi(P)=P'$ and $\phi(Q)=Q'$. Conjecture 1 in \cite{DG}, taking into account the obstruction presented in Section 4 in loc. cit,  states that all supersingular elliptic curves with level $N$ structure are valid degree insensitive key as long as for some integer $k>0$ we have $w(P,Q)=w(P',Q')^{\ell^k}$. This is a direct application of the corollary with $M=1$.



Conjecture~2 is about degree insensitive SIDH squares (these squares are now often called isogeny diamonds). Fix two different primes $\ell_A$ and $\ell_B$, and two numbers $e_A$ and $e_B$. A SIDH square is a commutative diagram
$$\begin{tikzcd}[row sep=huge, column sep=huge]
(E,P_A,Q_A,P_B,Q_B) \arrow[r, "\varphi_A"] \arrow[d, "\varphi_B"'] & (E_A',P_B',Q_B') \arrow[d, "\varphi_{BA}"] \\
(E_B',P_A',Q_A') \arrow[r, "\varphi_{AB}"'] & E_{AB}
\end{tikzcd}
$$
where $\{P_A,Q_A\}$ and $\{P'_A,Q'_A\}$ are basis of the $\ell_A^{e_A}$-torsion, and $\{P_B,Q_B\}$ and $\{P'_B,Q'_B\}$ are basis of the $\ell_B^{e_B}$-torsion; the isogenies $\phi_A$ has degree $\ell_A^{m_A}$ and $\phi_B$ has degree $\ell_B^{m_B}$ for some integers $m_A$ and $m_B$, and respect the level structure.  \cite[Conjecture 2]{DG} says that any four curves with level structure $(E,P_A,Q_A,P_B,Q_B)$, $(E_A',P_B',Q_B')$ ,
$(E_B',P_A',Q_A')$, and $E_{AB}$ can be put in such a diagram as long as there are no obstructions from the Weil pairing, i.e. $w(P_A,Q_A)=w(P_A',Q_A')^{\ell_A^{k_A}}$ and $w(P_B,Q_B)=w(P_B',Q_B')^{\ell_B^{k_B}}$ for some integers $k_A$ and $k_B$. Let us show it.

 There exist infinitely many isogenies $\phi_B \colon E \to E_B$ of degree $\ell_B^m$ sending $(P_A,Q_A) \mapsto (P_A',Q_A')$ (for the ``infinitely many" part we can compose  closed non-backtracking walks in the graph with full level structure with a walk from Corollary \ref{cor:valid_keys}). 
For $m$ big enough, by Proposition \ref{mixing_times} we can find a cyclic isogeny $\psi\colon E_A \to E_{AB}$ of the same degree $\ell_B^m$. 
Denoting $C_B = \ker(\phi_B)$, $K = \ker(\psi)$ and using that $w(P'_B, Q'_B) = w(P_B,Q_B)^{\ell_A^k}$ for some $k$,  Corollary \ref{cor:valid_keys} gives isogeny $\phi_A \colon E\to E_A$ of degree a power of $\ell_A$, sending $P_B \mapsto P'_B$, $Q_B \mapsto Q'_B$ and $C_B \mapsto K$. This last condition implies that we can complete the square with a map $E_B \to E_{AB}$, forming the requesting diagram and completing the proof.

\subsection{Relation with other works} \label{sec:otherworks}

The Ramanujan property of isogeny graphs without level structure is usually attributed to A. K. Pizer \cite{Pizer}. There, the result is stated for graphs constructed using Brandt matrices. These matrices are defined using quaternion algebras and theta series; they represent the action of Hecke operators on spaces of modular forms. Using the moduli space of elliptic curves over $\Z$, one can deduce the bound of the spectra of Hecke operators on modular forms from Deligne's proof of the Weil's conjecture. Finally, the Deuring correspondence provides the relations between Brandt matrices and isogeny graphs. This approach is taken up in full details in \cite[Section 3]{noi}, where it is extended to the case of isogeny graph with Borel level structure. A few weeks after this article appeared on ArXiv, A. Page and B. Wesolowski in \cite{OneEnd} further generalize this approach and gave an independent proof of a variant of our Theorem \ref{thm:main2}. They use a generalization of the Deuring correspondence - the adelic Deuring correspondence - to relate isogeny graphs with level structure to graphs defined using quaternion algebras. They then use the Jacquet–Langlands correspondence to relate the adjacency matrices of this graph to the action of Hecke operators on spaces of modular forms. As in the case without level structure, the bound on the spectra of these operators is a consequence of Deligne's proof of Weil conjecture. Similar arguments can also be found in \cite{CGL2}.

\medskip

Here we follow ideas from \cite{Rib,Emerton}. Using the moduli space of elliptic curves over $\mathbb{Z}$, we directly relate the adjacency matrix of the isogeny graphs with level structure to the action of certain Hecke operators on the coohomology of the moduli space of elliptic curves over $\mathbb{F}_{\ell}$. We then directly apply Deligne's proof of Weil conjecture. Our approach does not involve quaternion algebras, the Deuring and the Jaquet-Langlands correspondences, and modular forms, but still boils down to Deligne's Theorem. 

Let us also briefly survey a few papers on isogeny graphs which do not focus on the spectrum of the adjacency matrix. The Borel level structure case is studied by Arpin in \cite{Arpin}. The zeta-function of isogeny graphs with Borel level structure is studied in \cite{zeta-functions}. Other interesting papers are \cite{Land,Iez,towers}. 
In \cite{Pin,Arp_cic}, there is nice bound on the number of cycles on classical isogeny graphs obtained using different methods from ours.

Isogeny graphs of ordinary curves are studied by Kohel \cite{Kohel}, they have a rather different (and simpler!) structure from the supersingular ones, sometime they go by the names of volcano graphs or jellyfish graphs.

\begin{Remark}[Relation with the graphs $X^{p,q}$ from \cite{DSV} ]
Fixing $p=2$ and taking $H$ to be the group of scalar matrices modulo~$N$ for a prime level $N$, we get graphs closely related to the Cayley graphs $X^{\ell,N}$ from \cite[Chapter 4]{DSV}, and references therein. Indeed, there is only one supersingular elliptic curve $E/{\overline{\F}_2}$ up to isomorphism, that is the one with equation $y^2+y = x^3$, whose endomorphism ring of $E$ is a quaternion order $\calO$ that contains the integral Hermite quaternions $\Z[i,j,k]$ and it is contained in $\tfrac12 \Z[i,j,k]$, so that all $\ell$-isogenies of our graphs can be represented as elements of norm $\ell$ in $\calO$, and actually, up to postcomposition by automrphisms, they are the elements of $S_\ell \subset \Z[i,j,k]$ defined in \cite[page 68]{DSV}\footnote{Indeed all elements $\gamma_i$ of such an $S_\ell$ define an $\ell$-isogeny of $E$ and they are all distinct up to post-composition by automorphism because for $i \neq j$ the element $\gamma_i^{-1} \gamma_j = \tfrac 1\ell \ol{\gamma_i}\gamma_j$ is in $\tfrac{1}\ell \Z[i,j,k]$ but not in $\Z[i,j,k]$, hence it is not in $\tfrac{1}2 \Z[i,j,k]$ which contains $\calO$}. Moreover, the isomorphism $\F_N[i,j,k] \cong M_2(\F_N)$ in loc. cit. keeps track of the action of $\End(E)$ on $E[N]$: we get a bijection between $\mathrm{PGL}_2(\F_N)$ and $H$-structures on $E$, and also a bijection isomorphism classes of supersingular elliptic over $\overline{\F}_2$ together with an $H$-structure with the quotient of $\mathrm{PGL}_2(\F_q)$ by the image of $\Aut(E)$. In particular, a connected component of the isogeny graph $G(2,\ell,H)$ (all components are isomorphic) is a quotient of the Cayley graph $X^{\ell,N}$ obtained in \cite{DSV} reducing the elements of $S_\ell$ modulo~$N$.

\end{Remark}


\subsubsection*{Acknowledgments} 
We have had the pleasure and the benefit of conversations about the topics of this paper with S. Arpin, A. Basso, P. Caputo, L. De Feo, T. B. Fouotsa, T. Morrison, M. Sala, M. Salvi, R. Schoof, S. Vigogna and F. Viviani.
The first author also would like to thank the organizers and the participants of the Banff/Bristol 2023 workshop ``Isogeny graphs in Cryptography" for many discussions on the topics of this paper. We also thank the anonymous referee for his/her suggestions.

Both authors are supported by the MIUR Excellence Department Project MatMod@TOV awarded to the Department of Mathematics, University of Rome Tor Vergata, the ``National Group for Algebraic and Geometric Structures, and their Applications" (GNSAGA - INdAM), and the PRIN PNRR 2022 ``Mathematical Primitives for Post Quantum Digital Signatures". The second author is also supported by ``Programma Operativo Nazionale (PON) “Ricerca e Innovazione” 2014-2020.

\section{First properties of isogeny graphs and reduction of Theorems \ref{thm:main1} and \ref{thm:main2} to Theorem \ref{thm:spet1}}\label{S:Weil_pairing}
We fix $p$ and $\ell$ distinct prime numbers, $N$ a positive integer coprime to $p\ell$ and $H$ a subgroup of $\GLmod{N}$, together with the isogeny graph $G = G(p,\ell,H)$ in Definition \ref{def:graph} and its vertex set $V$.
The adjacency matrix $A$ defines a linear operator $A\colon \Q^V\to \Q^V$, hence also an operator $\C^V \to \C^V$, which maps a vertex $v$ to $\sum v_i$, where the sum runs over all edges $v \to v_i$ coming out of $v$.

\subsection{Automorphisms of isogeny graphs}\label{s:aut_G}

All isogeny graphs have the following automorphism, which is usually called Galois or Frobenius automorphism. 
\begin{subDefinition}[Frobenius automorphism]\label{def:Galois_inv}
	Let $\sigma$ be the Frobenius automorphism of $\overline{\F}_p/\F_p$, then $$\langle\sigma\rangle \colon G \to G$$  maps a vertex $(E,\phi)$ to the conjugated $(E^{\sigma},\phi^{\sigma}:=\sigma\circ\phi)$, and an isogeny to the conjugated by $\sigma$.
\end{subDefinition}

Adding level structure, naturally enriches isogeny graphs with the following automorphisms.

\begin{subDefinition}[Diamond and matricial automorphisms]\label{def:aut}
	Let $G$ be as in Definition \ref{def:graph}. For every $g$ in the normalizer $\mathcal N_H$ of $H$ in $\GLmod{N}$ we define an automorphism
	$$\langle g \rangle\colon G \to G $$
	$$ (E,\phi)\mapsto (E, \phi \circ g) $$
	
	In particular, for every $d$ in $(\Z/N\Z)^{\times}$, the diamond operator $\langle d \rangle $ is the automorphism associated with the diagonal matrix $ \smt d\,\,d$.
\end{subDefinition}

Observe that if $d = \smt d\,\,d$  belongs to $H$, then $\langle d \rangle$ is the identity. Moreover, even if $-1\notin H$, then $\langle -1 \rangle$ is the identity because $(E,-\phi)$ is always isomorphic to $(E,\phi)$.

Notice that, up to isomorphism, we can suppose that each elliptic curve $E_i$ in our graph is defined over $\F_{p^2}$ and that the Frobenius endomorphism $\Frob_{p^2}\colon E_i \to E_i$ acts as $[-p]$. Since the map $\sigma\colon E(\overline{\F}_p) \to E^\sigma(\overline{\F}_p)$ coincides with the action of $\Frob_p\colon E\to E^\sigma$, we deduce that $\langle \sigma \rangle ^2 = \langle p\rangle$  on the graph: indeed, for each vertex $(E_i,\phi_i)$ we have 
$$
\langle\sigma \rangle ^2 (E_i,\phi_i) = (E_i^{\sigma^2}, \sigma^2\circ \phi_i) = (E_i, \Frob_{p^2}\circ \phi_i) = (E_i, [-p]\circ \phi_i) = \langle -p \rangle (E_i,\phi_i) = \langle p \rangle (E_i,\phi_i)\,,
$$
where the last equality is true because $\langle-1\rangle$ is the identity.

\begin{subProposition}\label{p:quotient}
For every $p$, $\ell$, $N$, and $H$, the isogeny graph $G(p,\ell,H)$ is the quotient of the isogeny graph with full level structure $G(p,\ell,\{\Id\})$ by the action of $H$ given in Definition \ref{def:aut}. In particular, the spectrum of the adjacency matrix of $G(p,\ell,H)$ is a subset of the spectrum of the adjacency matrix of $G(p,\ell,\{\Id\})$.
\end{subProposition}

Using Proposition \ref{p:quotient}, one could deduce most of our results from the case of full level structure. However, we prefer to give proofs that directly work for any level structure.

To introduce the Atkin-Lehner automorphisms we need some more notations. If $N=N'q$, with $N'$ and $q$ coprime, and the group $H$ is a product $H'\times B$ in $\GLmod{N'}\times \GLmod{q}$ , then a level $H$ is structure consist of a level $H'$ and a level $B$ structure. Now assume that $q$ is a prime power, and $B$ is a Borel subgroup of $E[q]$; then a level $B$ structure consist of a cyclic subgroup $C$ of $E[q]$ of rank $q$.

\begin{subDefinition}[Atkin-Lehner automorphism]\label{def:ALgraph}
With the above notation,  the Atkin-Lehner automorphism
$$
w_q\colon G \to G
$$
maps $(E,\phi,C)$ to $(E/C,\pi\circ \phi, E[q]/\pi(C))$, where $\phi$ is a level $H'$ structure, $C$ is a level $B$ structure, and $\pi$ is the quotient map $E\to E/C$.
\end{subDefinition}

 When $N = N' q_1\cdots q_r$ with $N, q_1, \ldots, q_r$ coprime and $q_i$ prime powers, and $H = H' \times \prod B_i$ in $\GLmod{N'}\times \prod \GLmod{q_i}$, the above definition gives $r$ Atkin-Lehner automorphisms $w_{q_1}, \ldots, w_{q_r}$.


\subsection{Hermitian form and diagonalization}
With keep the notation of Definition \ref{def:graph}. We introduce the following Hermitian form $H$ on $\C^V$
\begin{subeqn}\label{eq:Hermitian}
H\left( (E_i,\phi_i),(E_j,\phi_j)\right) = \delta_{ij} a_i \,, 
\end{subeqn}
with $a_j = |\Aut(E_i,\phi_i)|$ and $\delta_{ij}$ is the Kronecker delta.

\begin{subProposition}[Adjoint of the adjacency matrix]\label{prop:diag}
Let $G$ and $A$ be as in Definition \ref{def:graph} and let $A^*$ be its adjoint with respect to the Hermitian form (\ref{eq:Hermitian}). Then, 

$$A^*=\langle \ell^{-1} \rangle A\,. $$

The adjacency matrix $A$ is diagonalizable, and the angles of its eigenvalues lie in $\tfrac{\pi}{k'}\mathbb{Z}$, where $k'$ is the minumum positive integer such that $\ell^{k'} \Id \in H$. In particular:
\begin{itemize}
	\item the operators $A$ and $A^*$ commute, are both diagonalizable, have the same spectrum, and are conjugate;
	\item if $\ell$ belongs to $H$, then $A=A^*$ and the spectrum of $A$ is real; 
	\item if $\ell$ belongs to $H$ and $p$ is congruent to 1 modulo 12, the adjacency matrix is symmetric.
\end{itemize}
\end{subProposition}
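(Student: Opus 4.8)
The plan is to compute the adjoint $A^*$ directly from the definition of the Hermitian form and of the edges, then deduce diagonalizability and the location of the eigenvalue angles by a purely linear-algebraic argument using the identity $A^* = \langle \ell^{-1}\rangle A$ together with the fact that $\langle \ell^{-1}\rangle$ has finite order $k'$.

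First I would verify the formula $A^* = \langle \ell^{-1}\rangle A$. For vertices $v_i = (E_i,\phi_i)$ and $v_j = (E_j,\phi_j)$, unwinding the definition of adjoint with respect to $H$ gives $H(Av_j, v_i) = a_i\,a_{ij}$ and $H(v_j, A^*v_i) = a_j\,\overline{(A^*)_{ji}}$, so one needs $a_i\,a_{ij} = a_j\,(A^*)_{ji}$, i.e. $(A^*)_{ji} = \tfrac{a_i}{a_j}a_{ij}$. The key input is a weighted count of dual isogenies: an edge from $v_j$ to $v_i$ is a degree-$\ell$ morphism $(E_j,\phi_j)\to(E_i,\phi_i)$ modulo $\Aut(v_i)$, and taking the dual isogeny (composed with the correction that $\hat\alpha\circ\alpha = [\ell]$, which acts on the level structure as $\langle \ell\rangle$) sets up a bijection between edges $v_j\to v_i$ and edges $\langle\ell^{-1}\rangle v_i \to v_j$; a standard orbit-counting (Burnside-type) bookkeeping of the automorphism groups acting on these isogenies then yields exactly $\tfrac{a_i}{a_j}a_{ij} = (\langle \ell^{-1}\rangle A)_{ji}$. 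This weighted-adjoint computation is the main obstacle: one has to be careful with the automorphism groups $\Aut(E_i,\phi_i)$ versus $\Aut(E_i)$, the role of $-1$, and the fact that $\langle \ell^{-1}\rangle$ merely permutes the vertices, so that the identity is at the level of matrices rather than pointwise on isogenies.

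Granting $A^* = \langle \ell^{-1}\rangle A$, the rest is formal. Since $\langle \cdot\rangle$ is a homomorphism from $N(H)$ to graph automorphisms and $\langle \ell^{-1}\rangle$ has order $k'$ (the least positive integer with $\ell^{k'}\Id\in H$, using also that $\langle -1\rangle = \mathrm{id}$), the operator $P := \langle \ell^{-1}\rangle$ is an isometry of $H$ with $P^{k'} = \mathrm{Id}$, hence $P$ is diagonalizable with eigenvalues among the $k'$-th roots of unity and $P^* = P^{-1}$. From $A = P^{-1}A^* = P^{-1}(PA)^* $... more directly: $A^{*} = PA$ gives $A = P^{-1}A^{*}$, and taking adjoints of $A^* = PA$ yields $A = A^{**} = (PA)^{*} = A^{*}P^{*} = A^{*}P^{-1} = PA P^{-1}$, so $A$ commutes with $P$. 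Then $A^* = PA$ with $P$ unitary and commuting with $A$ shows $A$ is normal, hence diagonalizable with an $H$-orthonormal eigenbasis; moreover $A$ and $A^*=PA$ are simultaneously diagonalizable, and on a common eigenvector with $Av=\lambda v$, $Pv=\zeta v$ (a $k'$-th root of unity) we get $\bar\lambda = \zeta\lambda$, forcing $\lambda = 0$ or $\lambda/|\lambda|$ to be a $(2k')$-th root of unity, i.e. $\arg\lambda \in \Z\tfrac{\pi}{k'}$.

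Finally, the three bulleted consequences are immediate: $A$ and $A^*$ commute and share a spectrum because they are simultaneously diagonalizable via the above; if $\ell\in H$ then $k'=1$, $P=\mathrm{Id}$, so $A=A^*$ is self-adjoint for a positive-definite Hermitian form and therefore has real spectrum; and if in addition $p\equiv 1\pmod{12}$ then every supersingular $j$-invariant over $\ol{\F_p}$ has $\Aut(E)=\{\pm 1\}$ (no extra automorphisms from $j=0,1728$), so all $a_i$ equal $|\Aut(E_i,\phi_i)|\in\{1,2\}$ but in fact the relevant normalization makes $H$ a scalar multiple of the standard one and $A$ literally symmetric — here I would invoke the earlier remark that $\langle -1\rangle$ is trivial so that $a_i$ is constant $=1$ in the symmetric normalization, giving $A = A^t$. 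The one genuine piece of work is the dual-isogeny weighted count establishing $A^* = \langle \ell^{-1}\rangle A$; everything else is linear algebra of normal operators on a Hermitian space.
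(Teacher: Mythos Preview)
Your proposal is correct and follows essentially the same route as the paper: establish $A^* = \langle\ell^{-1}\rangle A$ via duality of isogenies together with the automorphism bookkeeping, deduce normality and hence diagonalizability, and use the finite order of $\langle\ell^{-1}\rangle$ to constrain the eigenvalue angles. The only cosmetic differences are that the paper simply invokes the fact that diamond operators (being graph automorphisms) commute with $A$ rather than deriving this from the adjoint identity, and obtains the angle constraint by observing that $A^{k'}$ is Hermitian (so $\lambda^{k'}\in\R$) instead of via your eigenvalue equation $\bar\lambda=\zeta\lambda$; also, for the ``same spectrum'' bullet the paper explicitly uses that $A$ is a real matrix so its spectrum is closed under conjugation, a point you leave implicit.
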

\begin{proof}
For the first part, we need to prove that, given vertices $(E_i,\phi_i)$ and $(E_j, \phi_j)$ we have
\begin{subeqn}\label{eq:adj}
	H\big(A {\cdot} (E_i,\phi), (E_j,\phi_j)\big) =  H\big((E_i,\phi_i),  \langle \ell^{-1} \rangle A{\cdot} (E_j,\phi_j)\big)\,,
\end{subeqn}
where we interpret  $(E_i,\phi_i)$ and $(E_j, \phi_j)$ as elements of $\C^V$. Let $L$ be the set of degree $\ell$ morphisms $(E_i,\phi_i) \to (E_j,\phi_j)$, and let $M$ be the set of degree $\ell$ morphisms $(E_j,\phi_j) \to (E_i,[\ell]\phi_i)$. Then, using the definition of $A$, and the definition (\ref{eq:Hermitian}) of $H$, we find that 
\[
H(A{\cdot} (E_i,\phi), (E_j,\phi_j)) = \frac{\# L \cdot  \# \Aut(E_j,\phi_j)}{\# \Aut(E_j,\phi_j)} \,,\quad  H((E_i, \phi_i), \langle \ell \rangle A {\cdot} (E_j,\phi_j)) =  \frac{\# M \cdot \#\Aut(E_i, \phi_i)}{\#\Aut(E_i,[\ell]\phi_i)}\,.
\]
We notice that $\Aut(E_i,\ell\phi_i)$ equals $\Aut(E_i, \phi_i)$ as subgroup of $\Aut(E_i)$. Hence equation \eqref{eq:adj} is equivalent to the fact that $ L$ and $M$ have the same cardinality: indeed duality of isogenies gives a bijection between the two.

Since diamond operators commute with $A$, then $A$ is a normal operator, hence diagonalizable. Moreover, the adjoint of $A^{k'}$ is equal to $\langle\ell^{k'}\rangle A^{k'}= A^{k'}$, hence $A^{k'}$ is Hermitian and has real eigenvalues. We deduce that for each $\lambda$ in the spectrum of $A$, its power $\lambda^{k'}$ is real, hence the angle of $\lambda$ lies in $\tfrac{\pi}{k'}\mathbb{Z}$.

The operator $A^*$ is also diagonalizable. Since $A$ and $A^*$ commute, they have the same eigenvectors. The corresponding eigenvalues are conjugated. Since $A$ is real, its spectrum is invariant under conjugation, hence $A$ and $A^*$ have the same spectrum. Since $A$ and $A^*$ are both diagonalizable with the same spectrum, they are conjugate.

If $p$ is congruent to 1 modulo 12, all supersingular elliptic curves have $\{\pm 1\}$ as automorphism group, and hence all vertices $(E_i,\phi_i)$ have the same number $a_i$ of automorphisms: if $-1\in H$, then $a_i=2$, otherwise $a_i=1$. Then, the Hermitian form from Equation \eqref{eq:Hermitian} is a multiple of the standard form, and being self-adjoint coincides with being symmetric.
\end{proof}

\begin{subRemark}\label{rkm_A_star}
Since the Hermitian form (\ref{eq:Hermitian}) is presented in diagonal form, it is easy to write down the entries of $A^*$: for each $i$ we have
\begin{subeqn}\label{eq_A*}
	A^*((E_i,\phi_i))=a_i\sum_j a_j^{-1} (E_j,\phi_j)\,, 
\end{subeqn}
where  $a_i,a_j$ are as in Equation \eqref{eq:Hermitian}, and the sum runs over all edges $(E_j,\phi_j)\to (E_i,\phi_i)$, namely all the edges in $G$ with end-point $(E_i,\phi_i)$. We notice that the entries of $A^*$ are integers: any vertex $(E_j,\phi_j)$ appearing in the right hand side of \eqref{eq_A*} has multiplicity $a_ia_j^{-1} \cdot (\# S/a_i) = \# S/a_j$, for $S$ the set of degree $\ell$ isogenies $(E_j,\phi_j) \to (E_i,\phi_i)$; since $\Aut(E_j,\phi_j)$ acts freely on $S$ by precomposition, then $\# S/a_j$ is an integer.
\end{subRemark}

\subsection{Weil pairing and reduction of Theorems \ref{thm:main1} and \ref{thm:main2} to Theorem \ref{thm:spet1}}

To formulate the arguments in this subsection, we look at the following graph.

\begin{subDefinition}    
    Given $H$ a subgroup $\GLmod{N}$ and $\ell \in (\Z/N\Z)^\times$, the oriented
Caley graph $C = C(N,\det(H), \ell)$ is the graph whose  vertices are the element of  $R_H=\mu_N^\times(\overline{\F}_p)/\det(H)$ and such that there is an edge from $\xi_1$ to $\xi_2$ if and only if $\xi_2=\xi_1^{\ell}$.
\end{subDefinition}


Since $\mu_N^\times(\overline{\F}_p)$ is a principal homogeneous space for the right action of $(\Z/N\Z)^{\times}$, the graph $C(N,\det(H), \ell)$ has simple structure: it is the disjoint union of $n$ cycles $C_1,\ldots C_n$, each having the form of a loop:

\begin{center}
\begin{tikzpicture} 
        \node[draw=none, left] at (-1.7*2,0) {$C_i \cong$};
    \coordinate (1) at ( -1.7*1, 1.7*0 );
    \coordinate (2) at ( -1.7*0.866, 1.7*0.5 );
    \coordinate (3) at ( -1.7*0.5, 1.7*0.866 );
    \coordinate (4) at ( -1.7*0, 1.7*1 );
    \coordinate (6) at ( -1.7*0.866, -1.7*0.5 );
     \node[draw=none, left] at (1) {$v_1\,\,$};
     \node[draw=none, left] at (2) {$v_2\,\,$};
     \node[draw=none, left] at (3) {$v_3\,\,$};
     \node[draw=none, left] at (6) {$v_k\,\,$};
     \foreach \i in {1,2,3,4,6}
         \draw[fill=black](\i) circle (0.15em);
    \draw [-Stealth,thick, domain=180:150] plot ({1.7*cos(\x)}, {1.7*sin(\x)});
    \draw [-Stealth,thick, domain=150:120] plot ({1.7*cos(\x)}, {1.7*sin(\x)});
    \draw [-Stealth,thick, domain=120:90] plot ({1.7*cos(\x)}, {1.7*sin(\x)});
    \draw [dashed, -Stealth,domain=90:-150] plot ({1.7*cos(\x)}, {1.7*sin(\x)});
    \draw [-Stealth,thick, domain=210:180] plot ({1.7*cos(\x)}, {1.7*sin(\x)});

\end{tikzpicture}
\end{center}
with $k$ the order of $\ell$ in $(\Z/N\Z)^\times/\det(H)$ and $n= \varphi(N)/(k\, |\!\det(H)|)$.
In particular, the adjacency matrix $P_i$ of each $C_i$ is the cyclic permutation matrix on $k$ elements; its spectrum in $\C$ is thus the set $\mu_k(\C)$ of the $k$-th roots of unity in $\C$.

\medskip

If two elliptic curves with level structure are connected by a degree $\ell$ isogeny, then \cite[Chapter III, Proposition 8.2]{Sil} implies that the Weil invariant of the level structures are one the $\ell$-th power of the other, hence we have the following result.

\begin{subProposition}\label{prop:morphism}
The Weil invariant (see Definition \ref{def:Weil_invariant}) of a level structure gives a surjective map of graphs
\begin{subeqn}\label{eq:Weil_graphs}
	w\colon G(p,\ell,H)   \to C(N,\det(H), \ell)  \,.
\end{subeqn}
Moreover, in the language of Definitions \ref{def:aut} and \ref{def:Galois_inv}, for every matrix $g \in \GLmod N$ that normalizes of $H$ we have $w(\langle g\rangle (E,\phi)) = w((E,\phi))^{\det(g)}$ and, denoting $\sigma$ the  Frobenius automorphism of $\overline{\F}_p/\F_p$, we have $w(\sigma(E,\phi)) = w((E,\phi))^p$.
\end{subProposition}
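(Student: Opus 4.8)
The plan is to unwind the definitions and lean entirely on the compatibility of the Weil pairing with isogenies, which is exactly \cite[Chapter III, Proposition 8.2]{Sil}: for an isogeny $\alpha\colon E_1\to E_2$ of degree $\ell$ and points $P,Q\in E_1[N]$ one has $w_{E_2}(\alpha P,\alpha Q)=w_{E_1}(P,Q)^{\deg\alpha}=w_{E_1}(P,Q)^{\ell}$.

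\begin{proof}
We first check that $w$ is a well-defined map of graphs. On vertices, Definition \ref{def:Weil_invariant} already gives an element $w((E,\phi))\in R_H$, so we only need to see that edges go to edges. If $\alpha\colon (E_1,\phi_1)\to (E_2,\phi_2)$ is a degree $\ell$ morphism, then by definition there is $h\in H$ with $\alpha\circ\phi_1=\phi_2\circ h$. Applying the Weil pairing compatibility quoted above to the basis $\phi_1\smt{1}{}{0}{},\phi_1\smt{0}{}{1}{}$ of $E_1[N]$ and using that $w$ composed with the right $\det H$-action is insensitive to replacing $\phi_2$ by $\phi_2\circ h$ (indeed $w(\phi_2\circ h)=w(\phi_2)^{\det h}$ and $\det h\in\det H$), we get $w((E_2,\phi_2))=w((E_1,\phi_1))^{\ell}$ in $R_H$. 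Hence the image of the edge $\alpha$ is the edge of $C(N,\det H,\ell)$ from $w((E_1,\phi_1))$ to $w((E_1,\phi_1))^{\ell}$, and $w$ is a morphism of graphs.

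Next, surjectivity. Every $\xi\in R_H$ is of the form $w((E,\phi))$ for a suitable level structure $\phi$: fix any supersingular $E/\ol\F_p$ and any basis of $E[N]$; its Weil pairing is a primitive $N$-th root of unity, and postcomposing the basis with a matrix in $\GLmod N$ scales the pairing by the determinant of that matrix, so since $\det\colon \GLmod N\to \modstar{N}$ is surjective we can hit any class in $\mu_N^\times(\ol\F_p)$, a fortiori any class in the quotient $R_H$. Since $C(N,\det H,\ell)$ is a disjoint union of loops, every edge is determined by its source vertex, so surjectivity on vertices forces surjectivity on edges, and $w$ is surjective as a map of graphs.

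Finally, the two equivariance formulas. For $g\in N(H)$ the automorphism $\langle g\rangle$ sends $(E,\phi)$ to $(E,\phi\circ g)$, and $w(\phi\circ g)=w\big(\phi(g\smt{1}{}{0}{}),\phi(g\smt{0}{}{1}{})\big)$; since $w_E$ is alternating bilinear, this equals $w(\phi)^{\det g}$, which is the asserted identity $w(\langle g\rangle(E,\phi))=w((E,\phi))^{\det g}$ in $R_H$ (the right $\det H$-action on $R_H$ makes the exponent $\det g$ well-defined modulo $\det H$, which is all that is needed as $g$ normalizes $H$). For the Frobenius automorphism $\langle\sigma\rangle$ of Definition \ref{def:Galois_inv}, it sends $(E,\phi)$ to $(E^\sigma,\sigma\circ\phi)$, and Galois-equivariance of the Weil pairing together with $\sigma$ acting as the $p$-power map on $\mu_N(\ol\F_p)$ gives $w^{E^\sigma}(\sigma P,\sigma Q)=\sigma\big(w^E(P,Q)\big)=w^E(P,Q)^p$; taking $P,Q$ the images of the standard basis under $\phi$ yields $w(\sigma(E,\phi))=w((E,\phi))^p$.
\end{proof}

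There is no real obstacle here; the only point requiring a little care is bookkeeping of the equivalence relations, namely that everything is stated in the quotient $R_H$ and that replacing $\phi$ by $\phi\circ h$ with $h\in H$ changes the raw Weil pairing by $\det h\in\det H$, hence does not change the class. The heart of the argument is simply the functoriality of the Weil pairing under isogenies and under Galois, both standard.
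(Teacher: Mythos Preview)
Your proof is correct and follows exactly the approach the paper indicates: the paper simply cites \cite[Chapter III, Proposition 8.2]{Sil} for the compatibility of the Weil pairing with isogenies and leaves the rest implicit, while you spell out the well-definedness, surjectivity, and the two equivariance formulas in detail. There is no substantive difference in method.
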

An example of isogeny graph together with the map $w$ is given in Figure \ref{figure}.

Denoting $V(\cdot)$ the set of vertices of a graph, the above map extends to  linear maps 
\begin{subeqn}\label{eq:Weil_push_all}
w_*\colon \Q^{V(G)} \lto \Q^{V(C)}  \,.
\end{subeqn}
Fix a connected components $C_i$ of $C$, let $G_i:=w^{-1}(C_i)$  (this definition of $G_i$ coincides with the one in the Introduction), then the above map  restricts to a morphism
\begin{subeqn}\label{eq:Weil_push}
w_{i,*}\colon \Q^{V(G_i)} \lto \Q^{V(C_i)}  \,.
\end{subeqn}

\begin{subRemark}[Description of $\ker(w_{i,*})$]\label{rem:genratori_nuclei} The kernel of the map $w_{i,*}$ defined in Equation \eqref{eq:Weil_push} will play an important role in this paper, so let us describe it explicitly. 

If, as in the Borel case, $\det(H)= (\Z/N\Z)^\times$, then $\ker(w_{i,*})$  is the space of linear combination of vertices of the graphs whose coefficient sum up to zero, i.e. the orthogonal complement of the vector $(1,\dots , 1)$ in $\Q^{V(G)}$ for the standard scalar product.

In general,  $\ker(w_{i,*})$ is equal to the subspace of $\Q^{V(G_i)}$ spanned by linear combinations of elements of $V(G_i)$ with the same Weil invariant and such that the coefficients sum up to zero. In other words, calling $V_\xi$ the set of vertices of $G$ with Weil invariant $\xi \in R_H$, then 
\[
\ker(w_*) = \bigoplus_{\xi \in R_H} \left\{ x \in \Q^{V_\xi}:  \sum x_v = 0  \right\} \,, \quad \ker(w_{i,*}) = \bigoplus_{\xi \in V(C_i)} \left\{ x \in \Q^{V_\xi}:  \sum x_v = 0  \right\} 
\]
where we identify $\Q^{V(G)}$ with the direct sum of the various $\Q^{V_\xi}$.


\end{subRemark}

Since $w_i$ is a map of regular graphs, $\ker(w_{i,*})$ is stable for the action of the adjacency matrix $A_i$ of $G_i$.  
\begin{subProposition}\label{prop:spet}
Let $G$ be an isogeny graph as in Definition \ref{def:graph}, $G_i$ be one of its subgraphs defined above, and $A_i$ the adjacency matrix of $G_i$.

The spectrum of $A_i$ over $\C$ is equal to the union of $(\ell+1)\mu_k(\C)$  and the spectrum of $A_i$  restricted to $\ker(w_{i,*})\otimes \C$, where $k$ is as in Theorem \ref{thm:main2} and $\mu_k(\C)$ is the group of $k$-th roots of unity in $\C$.
\end{subProposition}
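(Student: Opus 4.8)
The plan is to exploit the surjective map of graphs $w\colon G_i \to C_i$ from Proposition~\ref{prop:morphism} and the induced push-forward $w_{i,*}\colon \C^{V(G_i)}\to \C^{V(C_i)}$ of Equation~\eqref{eq:Weil_push}. Since $w$ is a map of graphs, $w_{i,*}$ intertwines the adjacency operators: $w_{i,*}\circ A_i^t = P_i^t \circ w_{i,*}$ on column vectors, or rather (being careful about which side the adjacency matrix acts) $A_i$ and $P_i$ are compatible via $w_{i,*}$, so we get a short exact sequence of $A_i$-modules
\begin{subeqn}\label{eq:ses}
0 \to \Ker(w_{i,*}) \to \C^{V(G_i)} \xrightarrow{w_{i,*}} \C^{V(C_i)} \to 0\,,
\end{subeqn}
in which $\Ker(w_{i,*})$ is $A_i$-stable (as already noted) and the quotient $\C^{V(C_i)}$ carries the action of $P_i$, the cyclic permutation matrix on $k$ elements, scaled appropriately.

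First I would check the intertwining relation precisely. A vertex $v$ of $G_i$ has exactly $\ell+1$ outgoing edges, all landing on vertices with Weil invariant $w(v)^\ell$; dually, the adjacency operator $A_i$ (which sends a vertex to the formal sum of vertices mapping \emph{to} it) is compatible with $w_{i,*}$ in the sense that $w_{i,*}(A_i x) = (\ell+1)\,P_i\, w_{i,*}(x)$ for all $x$, because each of the $\ell+1$ edges into a given target redistributes mass consistently with the structure of $C_i$ as a single $k$-cycle with each "fiber" $V_\xi$ collapsed to a point. Here $P_i$ is the cyclic permutation matrix on the $k$ vertices of $C_i$, whose spectrum is exactly $\mu_k(\C)$. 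Hence the operator induced by $A_i$ on the quotient $\C^{V(C_i)}$ is $(\ell+1)P_i$, with spectrum $(\ell+1)\mu_k(\C)$.

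Next, since $A_i$ is diagonalizable (Proposition~\ref{prop:diag}), the short exact sequence \eqref{eq:ses} of $A_i$-modules splits as a direct sum of $A_i$-invariant subspaces, so the spectrum of $A_i$ is the union (with multiplicity) of the spectrum of $A_i|_{\Ker(w_{i,*})}$ and the spectrum of the induced operator $(\ell+1)P_i$ on the quotient, namely $(\ell+1)\mu_k(\C)$. This gives the claimed equality of spectra as sets. To upgrade to the statement as phrased — "the union of $(\ell+1)\mu_k(\C)$ and the spectrum of $A_i$ restricted to $\Ker(w_{i,*})$" — it suffices to observe that diagonalizability means no eigenvalue is lost when passing to a direct summand, and the two summands account for all of $\C^{V(G_i)}$.

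The main obstacle I anticipate is getting the intertwining relation and the scaling factor exactly right, in particular tracking the convention that $A_i$ acts as the transpose of the naive adjacency matrix (it sends $v$ to the sum over edges $v \to v_i$ of $v_i$, per the first paragraph of Section~\ref{S:Weil_pairing}) and verifying that $w_{i,*}$ — which forgets everything but the Weil invariant — genuinely commutes with this action up to the factor $\ell+1$, uniformly over all fibers $V_\xi$. Once that compatibility is established, the rest is formal: diagonalizability of $A_i$ (already proved) makes the decomposition into $\Ker(w_{i,*})$ and a complementary $A_i$-invariant subspace isomorphic to $\C^{V(C_i)}$ automatic, and the spectrum of the cyclic permutation matrix $P_i$ on $k$ elements is the standard fact that it equals $\mu_k(\C)$.
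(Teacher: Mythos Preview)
Your proposal is correct and follows the same route as the paper's proof: both use the intertwining relation $(\ell+1)P_i \circ w_{i,*} = w_{i,*}\circ A_i$ to identify the action induced by $A_i$ on $\C^{V(G_i)}/\Ker(w_{i,*}) \cong \C^{V(C_i)}$ with $(\ell+1)P_i$, whose spectrum is $(\ell+1)\mu_k(\C)$. The paper exhibits an explicit complement $U = \mathrm{span}\{v_j\}$ with $v_j = \sum_{w(v)=\xi_j} v$, whereas you split the short exact sequence abstractly via diagonalizability of $A_i$ (Proposition~\ref{prop:diag}); this is only a cosmetic difference, though note that your parenthetical in the second paragraph (``vertices mapping \emph{to} it'') has the direction of $A_i$ backwards --- your final paragraph has it right.
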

\begin{proof}

The spectrum of $A_i$ is the union of the spectra of $A_i$ restricted to $\ker(w_{i,*})$ and of $A_i$ when acting on the quotient $\C^{V(G_i)}/\ker(w_{i,*})$. Since $w_i$ is a map of graphs and $G_i$ is $\ell+1$ regular while $C_i$ is $1$ regular, the action of $A_i$ on $\C^{V(G_i)}/\ker(w_{i,*}) \cong \C^{V(C_i)}$  is $\ell+1$ times the adjacency matrix of $C_i$, that is 
$$
\left( \begin{matrix}
	0 & && \\
	\vdots && (\ell{+}1) \Id_{k-1} & \\
	0 & && \\
	(\ell{+}1)  &0 & \cdots &0
\end{matrix}\right)
$$
which  is diagonalizable with spectrum $(\ell{+}1)\mu_{k}(\C)$.
\end{proof}

The study of the spectrum of $A_i$ to $\ker(w_{i,*})$ is rather delicate; Sections \ref{S:modular_curve} and \ref{S:relation} are devoted to the proof of the following result.

\begin{subTheorem}[see Theorem \ref{thm:spet2}]\label{thm:spet1}
Let $G$ be as in Definition \ref{def:graph}, let $G_i$ be one of its subgraphs defined above, with adjacency matrix $A_i$, acting on the kernel $\ker(w_{i,*})$ of the map (\ref{eq:Weil_push}).
Then the absolute values of the eigenvalues of $A_i$ restricted to $\ker(w_{i,*})\otimes \C$ are strictly smaller than $2\sqrt{\ell}$.
\end{subTheorem}

Let us see how this statement implies the other theorems described in the Introduction.

\begin{subCorollary}\label{cor:conn_comp}
With the notation as in Theorem \ref{thm:spet1}, each $G_i$ is connected. If $p,\ell$ and $\det \mathcal N_H$ generate $(\Z/N\Z)^\times$, then all $G_i$'s are isomorphic.
\end{subCorollary}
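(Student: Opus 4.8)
\textbf{Proof plan for Corollary \ref{cor:conn_comp}.}

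The plan is to deduce both statements from Theorem \ref{thm:spet1} together with the structural results of this section. For connectedness of $G_i$: by Proposition \ref{prop:spet}, the spectrum of $A_i$ is the union of $(\ell{+}1)\mu_k(\C)$ and the spectrum of $A_i$ restricted to $\Ker(w_{i,*})$. By Theorem \ref{thm:spet1}, every eigenvalue in the latter piece has modulus strictly smaller than $2\sqrt{\ell} < \ell{+}1$ (the inequality $2\sqrt\ell<\ell+1$ holds since $\ell\ge 2$, being equivalent to $(\sqrt\ell-1)^2>0$). Hence the eigenvalue $\ell{+}1$ of $A_i$ occurs only inside the $U$-part and there with multiplicity one, since $\mu_k(\C)$ contains $1$ exactly once. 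Now I invoke the standard fact from spectral graph theory (Perron–Frobenius for the $(\ell{+}1)$-regular nonnegative matrix $A_i$, cf. \cite[Proposition 1.1.2]{DSV}): the multiplicity of the top eigenvalue $\ell{+}1$ of the adjacency matrix of a finite $(\ell{+}1)$-regular graph equals the number of connected components. Since $G_i$ is $(\ell{+}1)$-regular (each vertex has $\ell{+}1$ outgoing edges, and the in-degree equals the out-degree because $A_i$ and $A_i^*$ are conjugate by Proposition \ref{prop:diag}, or directly because $\sum_i a_{ij}=\ell+1$ follows from duality of isogenies), multiplicity one forces $G_i$ connected.

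For the isomorphism statement: recall from Proposition \ref{prop:morphism} that for $g\in N(H)$ the automorphism $\langle g\rangle$ satisfies $w(\langle g\rangle(E,\phi)) = w((E,\phi))^{\det g}$, and the Frobenius automorphism $\langle\sigma\rangle$ satisfies $w(\sigma(E,\phi)) = w((E,\phi))^p$. Thus $\langle g\rangle$ and $\langle\sigma\rangle$ act on the set $R_H$ of Weil invariants, hence permute the connected components $G_1,\dots,G_n$ (which are exactly the fibres $w^{-1}(C_i)$ over the $\ell$-orbits $C_i\subset R_H$): $\langle g\rangle$ sends $G_i$ isomorphically onto $w^{-1}(C_i^{\det g})$ and $\langle\sigma\rangle$ sends $G_i$ isomorphically onto $w^{-1}(C_i^{p})$. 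The induced permutations on the set $\{C_1,\dots,C_n\}$ of $\ell$-orbits in $R_H\cong (\Z/N\Z)^\times/\det H$ are translation by $\det g$ and by $p$ respectively (well-defined on the quotient by $\langle\ell\rangle$). Therefore the subgroup of $\mathrm{Sym}(\{C_1,\dots,C_n\})$ generated by these permutations is the image of the subgroup of $(\Z/N\Z)^\times$ generated by $\ell$, $p$ and $\det N(H)$ acting by translation on $(\Z/N\Z)^\times/(\det H\cdot\langle\ell\rangle)$. Under the hypothesis that $p$, $\ell$ and $\det N(H)$ generate $(\Z/N\Z)^\times$, this action is transitive on $\{C_1,\dots,C_n\}$, so all the $G_i$ are carried onto one another by graph automorphisms, hence are pairwise isomorphic.

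The only genuinely nontrivial input is Theorem \ref{thm:spet1}, which is proved later; granting it, the main point to be careful about is the regularity and the Perron–Frobenius argument in the first paragraph — in particular that $A_i$, although not necessarily symmetric, is a nonnegative matrix whose row sums (in the sense of in-degrees) are constant equal to $\ell{+}1$, so that the classical criterion "multiplicity of the Perron eigenvalue $=$ number of components" still applies; this follows from Proposition \ref{prop:diag} since $A_i$ is conjugate to $A_i^*$ and $A_i^*$ visibly has constant row sums by Remark \ref{rkm_A_star}, or one may simply apply the criterion to the symmetric matrix $A_i^*A_i$ whose connected components coincide with those of $G_i$. Everything else is bookkeeping with the $(\Z/N\Z)^\times$-action on Weil invariants.
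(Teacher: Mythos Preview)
Your proposal is correct and follows essentially the same route as the paper: both parts hinge on Proposition~\ref{prop:spet} plus Theorem~\ref{thm:spet1} together with the Perron--Frobenius-type fact from \cite[Proposition 1.1.2]{DSV} for connectedness, and on the action of $\langle g\rangle$ and $\langle\sigma\rangle$ on Weil invariants (Proposition~\ref{prop:morphism}) for the isomorphism of the $G_i$'s. Your additional care about the non-symmetric case (constant in-degree via Proposition~\ref{prop:diag} and Remark~\ref{rkm_A_star}) and the explicit check $2\sqrt{\ell}<\ell{+}1$ are details the paper leaves implicit.
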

\begin{proof}
By general graph theory, e.g. the argument from \cite[Proposition 1.1.2]{DSV}, the number of connected component of an $\ell{+}1$ regular graph is the multiplicity of the eigenvalues $\ell{+}1$ for the adjacency matrix, hence Proposition \ref{prop:spet} and Theorem \ref{thm:spet1} implies that $G_i$ is connected. 

For the second part we notice that $p,\ell$ and $\det \mathcal N_H$ generate $(\Z/N\Z)^\times$ if and only if $\langle p, \det \mathcal N_H\rangle$ acts transitively on the set of orbits $\{C_1, \ldots, C_n\}$.
If, for $g$ in $\mathcal N_H$, $\det(g)$ maps $C_i$ to $C_j$, then $\langle g \rangle$ and $\langle g^{-1} \rangle$ give an isomorphism between $G_i$ and $G_j$. Analogously, if  $p$ maps $C_i$ to $C_j$, then $\langle \sigma \rangle$ gives an isomorphism between $G_i$ and $G_j$.
\end{proof}

\begin{figure}[H]
\centering
    \includegraphics[width=16cm]{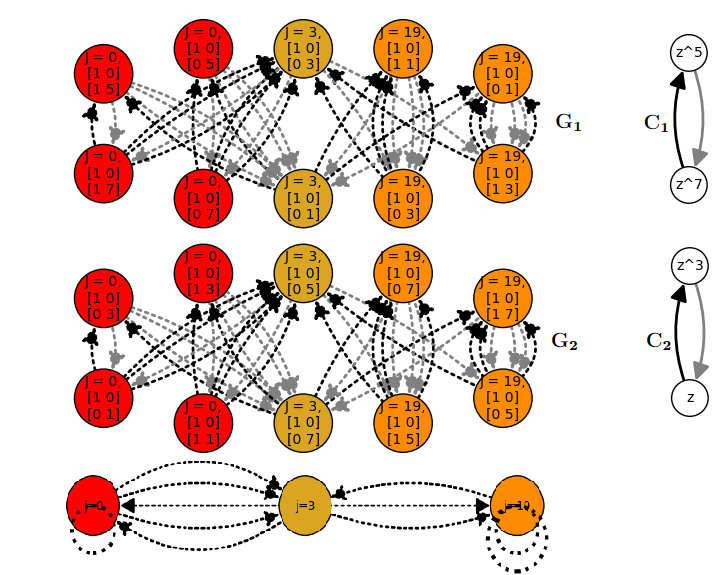}
    \caption{\emph{This is an example of isogeny graph $G(p,\ell,H)$ with $p=23$, $\ell=3$ and $H = \langle \smt5621,\smt1201,\smt7027,\smt5005,\smt2771,\smt1401,\smt1041 \rangle $ the only index 8 subgroup of $\GLmod{8}$.
    \newline
    $\qquad$ The color indicates the elliptic curve, or equivalently the $j$-invariant. The level structure is given through a matrix: for each of the three elliptic curves we have chosen a (non-canonical) basis of the $8$-torsion, and the matrix gives the change of basis. For mere visual clarity, arrows going up are black, arrows going down are gray.
    \newline
    $\qquad$ The graph $G(p,\ell,H)$ has two components, $G_1$ and $G_2$, which correspond via the Weil invariant to the two connected components $C_1$ and $C_2$ of the Cayley graph, depicted on the right.  Since each $C_i$ has two vertices, each $G_i$ is bipartite, see Remark \ref{rem:partition}.  At the bottom, the graph without level structure. 
    }} 
\label{figure}
\end{figure}    

To prove Theorem \ref{thm:main1} and Theorem \ref{thm:main2} we will use the following lemma.

\begin{subLemma}\label{lem:bound_poly}
Let $p(x)\in \Z[x]$ be a monic integral polynomial of degree $d$, 
$\ell$ and $k'$ two positive integers such that for every complex root $\lambda$ of $p(x)$ one has $|\lambda| < 2\sqrt{\ell}$ and the phase of $\lambda$ is in $ \frac{\pi}{k'} \Z$; then $$|\lambda| < 2\sqrt{\ell}-\left(4\sqrt{\ell}\right)^{-2dk'+1} \,.
$$
\end{subLemma}

\begin{proof}

Let $F$ be the subfield of $\C$ obtained adding to $\Q$ all roots of $p(x)$, all $k'$th roots of unity and $\sqrt{\ell}$. The field $F$ is a Galois extension of $\Q$ of degree at most $2dk'$.

Let $\lambda$ be a root of  $p(x)$ of phase $\xi$. Consider the product
$$P:=\prod_{\sigma \in \Gal(F/\Q)}\sigma(2\sqrt{\ell}-\xi^{-1}\lambda) \,.$$
Observe first that $P$ is $\Gal(F/\Q)$-invariant, hence it is in $\Q$. It is a product of algebraic integers, hence it is in $\Z$.
Because of the assumption on the absolute values of the roots of $p(x)$, $P$ is different from 0, hence $|P|\geq 1$.   Furthermore, $|\sigma(2\sqrt{\ell}-\xi^{-1}\lambda)| \le  |\sigma(2\sqrt{\ell})|+|\sigma(\xi^{-1})||\sigma(\lambda)|< 4\sqrt{\ell}$, hence
$$|2\sqrt{\ell}-\xi^{-1}\lambda| = \frac{|P|}{ \prod_{\sigma \in \Gal(F/\Q)\setminus\{1\}} |\sigma(2\sqrt{\ell}-\lambda)| } > \left(4\sqrt\ell\right)^{-|G|+1} \,. $$
\end{proof}

\emph{Proof of Theorems \ref{thm:main1} and \ref{thm:main2}}  The statement about the connected components is Corollary \ref{cor:conn_comp}.  Diagonalizability and the angles of the eigenvalues are given in Proposition \ref{prop:diag}. The eigenvalues of absolute value $\ell{+}1$ are described in Proposition  \ref{prop:spet}. 

To bound the aboslute values of the other eigenvalues, we first apply Theorem \ref{thm:spet1}, and then we apply Lemma \ref{lem:bound_poly} to the characteristic polynomial of the adjacency matrix $A_i$ restricted to $\ker(w_{i,*})\otimes \C$.

\subsection{Isomorphism between Borel and Cartan level structure}\label{S:BorelversusCartan}
Here we define an isomorphism between graphs with Borel and Cartan level structure; because of this isomorphism, in the sequel, when we spell out our results in special cases, we will consider the Borel level structures and not the Cartan ones.

Fix $p$ and $\ell$ distinct primes; let $N$ be a positive integer coprime with $p$ and $\ell$; let $B_0(N^2)$ be the Borel subgroup of $\GLmod{N^2}$ and $T(N)$ the split Cartan of $\GLmod{N}$. 
Consider the map
\begin{equation*}\label{eq:iso_BC}
    \begin{aligned}
F\colon G(p,\ell,B_0(N^2))&\lto G(p,\ell,T(N))\\
(E,C)&\longmapsto (E/NC,C/NC,E[N]/NC) \ .
    \end{aligned}
\end{equation*}

\begin{subProposition}
The map $F$ defined above 
gives an isomorphism of graphs. 
\end{subProposition}
\begin{proof} The map $F$ naturally extends to edges:  given vertices $(E,C)$ and $(E',C')$ in the Borel graph, for each $\ell$-isogeny $\varphi \colon E \to E'$ such that $\varphi(C) = C'$, we also have $\varphi(NC) = NC'$, hence there exists an isogeny  $\tilde \varphi$ of degree $\ell$ that fits in the commutative diagram
$$
\begin{tikzcd}
    E \arrow[d] \arrow[r, "\varphi"] & E'\arrow[d]
    \\
    E/NC \arrow[r, "\tilde \varphi"]  & E'/NC 
\end{tikzcd} 
$$
In particular, the isogeny $\tilde \varphi$ has degree $\ell$ and sends the subgroups $C/NC$, $E[N]/NC$ respectively to $C'/NC'$, $ E'[N]/NC'$. 
The inverse of $F$, at the level of vertices, is the map sending $(E, C_1, C_2)$ to $(E/C_2, (N^{-1} C_1)/C_2)$, where $N^{-1}C_1$ is the set of points $P\in E[N^2]$ such that $NP$ lies in $C_1$.

\end{proof}

\section{Preliminary results on modular curves}\label{S:modular_curve}
In this section we collect some facts about the compactified moduli space of elliptic curves with level structure, and Hecke operators. Our main references is \cite{DeRa}, as it treats these concepts in the generality that we need. There are however many other references about these classical topics.

We need the construction of the moduli space as stack: first we need the definition of (generalized) elliptic curve over an arbitrary scheme $S$; then, as usual, the moduli space will be the space $M$ such that maps from $S$ to $M$ are equivalent to (generalized) elliptic curves over $S$. A modern reference about stacks is \cite{Alper}, it contains also an introduction with a high level description of moduli spaces and stacks.

Following \cite[Chapter II page 173]{DeRa}, a generalized elliptic curve is either an elliptic curve or a N\'eron polygon.  A N\'eron polygon is a curve obtained taking $n$ copies of $\P^1$ indexed by $\Z/n$, and gluing the point $0$ of the $i$-th copy, with the point $\infty$ of the $i+1$- copy; its smooth locus has a natural group structure.

A generalized elliptic curves $\pi\colon E\to S$ over an arbitrary scheme $S$ is a family of genus one curves, whose geometric fibers are either elliptic curves or N\'eron polygons, and with a group structure on the smooth locus \cite[Definition 1.12 page 36]{DeRa}.

Fix a positive integer $N$ and a subgroup $H$ of $\GLmod{N}$.
A level $H$ structure on an elliptic curve or on a N\'eron polygon with $N$ edges $E$ is an isomorphism $\phi$ between the $N$ torsion of $E$ and $(\Z/N\Z)^{\oplus 2}$; two level structures $\phi_1$ and $\phi_2$ are isomorphic if  there exists an $h$ in $H$ such that $\phi_1=\phi_2 \circ h$. Observe that to have such a level structure the characteristic of the base field can not divide $N$.

A level $H$ structure on a generalized elliptic curves $\pi\colon E\to S$ over an arbitrary scheme $S$ is  an isomorphism $\phi$ of the $N$ torsion of $E$ with $(\Z/N\Z)^{\oplus 2}_S$; two level structures $\phi_1$ and $\phi_2$ are isomorphic if \'etale locally on $S$ there exists an $h$ in $H$ such that $\phi_1=\phi_2 \circ h$.


A key result of \cite{DeRa} is that the moduli stack $\calM_H$ parameterizing generalized elliptic curve with level $H$ structure is a proper and smooth Deligne-Mumford stack over $\Z[1/N]$, see \cite[Section IV.3, and Theorem 3.4]{DeRa}. The moduli space of elliptic curves is an open substack of $\calM_H$; in particular, it is not proper.

For the proofs of our results, we need a more general definition of level structure that works also when the characteristic divides the level. The most general notion is the one of Drinfeld level structure, see \cite{KM}; in this paper we will only need a generalization of Borel level structure, already discussed in \cite{DeRa}, which we recall below.


For every positive integer $k$, let $B_0(k) = \{ \smt *0**\}$ be the standard Borel subgroup of $\GLmod{k}$. Given $M=Nq_1\cdots q_r$, with $q_i$ prime powers that are pairwise coprime and prime to $N$, we will consider level structures associated with subgroups $K$ of $\GLmod{M}$ of the form 

\begin{equation}\label{eq_K}
K = H \times B_0(q_1) \times \cdots \times B_0(q_r)      \quad < \quad    \GLmod{M} = \GLmod{N}\times \prod_{i=1}^r\GLmod{q_i}\,,
\end{equation}
for $H$ a subgroup of $\GLmod{N}$. When $r=1$ and $q_1=p$ is prime, we write
\begin{equation}\label{eq_Hp}
	H_p := H \times B_0(p)  \quad < \GLmod{N}\times \GLmod{p} = \GLmod{Np}\,.
\end{equation}

For these kind of subgroups, a level $K$ structure on a generalized elliptic curve $\pi \colon E\to S$ is the datum of a level $H$ structure $\phi$, and, for each $q_i$, a cyclic locally free subgroup scheme $C_i$ of rank $q_i$ such that the subgroup generated by $C_i$ and the image of $\phi$ intersects every irreducible component of every geometric fiber of $\pi$ (see \cite[1.4.1, page 100]{DeRa}). 

Since a Borel subgroup $B_0(q)$ is the stabilizer of a line in $(\Z/q\Z)^2$, we observe that over $\Z[1/M]$ this second definition of level $K$ structure is equivalent to the previous one, i.e. to an isomorphism between the $M$-torsion and $(\Z/M\Z)^{\oplus 2}$ up to the action of $K$. On the other hand, if, for example, $q_i$ is prime, over $\ol{\F}_{q_i}$ the kernel of the Frobenius is a ``new'' example of $B_0(q_i)$-structure for an elliptic curve which does not fit in the previous definition.

The stack $\calM_K$ parametrizes generalized elliptic curves with level $K$ structure such that the N\'eron polygons have only $M$ edges. It is a proper and regular Deligne-Mumford stack over $\Z[1/N]$, it is smooth outside the $\overline{\F}_{q_i}$ points parametrizing supersingular elliptic curves, see \cite[Chapter V, Theorem 1.6, Proposition 1.10, Variants 1.14 and 1.20]{DeRa}.

For $d$ in $(\Z/N\Z)^{\times}$, we define an automorphism $\langle d \rangle$  of $\calM_K$ called diamond operator. A point of $\calM_K$ is a generalized elliptic curve with level $K$ structure $(E,\phi, C_1,\dots , C_r)$; we let\footnote{This definition of the diamond operator generalizes the one from \cite[Section 7.9]{DS}, which is given just for torsion point level structures. It  differs slightly from the one given in \cite[Appendix A]{RSDZB} for full level structures. Our definition applies to arbitrary level structure, and it is compatible with the Eichler-Shimura relation \ref{Eichler-Shimura}.} 
\begin{equation}\label{diamond}
\langle d \rangle (E,\phi, C_1,\dots , C_r):= (E,d\phi, C_1,\dots , C_r) \, ,
\end{equation}
where $d\phi$ is the level $H$ structure obtained composing $\phi$ with the multiplication by $d$. This definition makes sense for curves over an arbitrary scheme $S$, so gives an automorphism of $\calM_K$. The inverse of $\langle d \rangle$ is $\langle e \rangle$, where $e$ is a multiplicative inverse of $d$ modulo ~$N$.

We now introduce two key maps, that will play more than one role for us
\begin{equation}\label{eq:maps_Tl}
\begin{aligned}
	&\pr_p\colon \calM_{H_p} \to  \calM_H\,, \quad & \pr_p (E\to S, \phi, C) = (E\to S, \phi)\,, \\
	&\quot_p\colon \calM_{H_p} \to  \calM_H \quad & \quot_p (E\to S, \phi, C) = (E/C\to S, \pi_C\circ \phi)\,,
\end{aligned}
\end{equation}
where $\pi_C$ is the quotient map $E \to E/C$.

Following \cite[Section V]{DeRa}, we first use them to study the fiber $\calM_{H_p,\F_p} = \calM_{H_p}\times \Spec \F_p$. The maps $\pr_p$ and $\quot_p$ have right inverses when restricted to  $\calM_{H_p,\F_p}$. Indeed, an elliptic curve $E$ over $\overline{\F}_p$ has only two subgroup or rank $p$: the kernel of the Frobenius and the kernel of the Verschiebung. Recall that, by definition, the Verschiebung is the dual isogeny of the Frobenius; we denote it by $\Ver$. They are equal if and only if the curve is supersingular. 
We obtain two morphisms
\begin{equation}\label{eq:pieces_B0p}
\begin{aligned}
	\pr_{p,p}^{-1}\colon \calM_{H,\F_p} &\lto \calM_{H_p,\F_p}\,, \qquad   (E/S/\F_p, \phi) \mapsto (E/S/\F_p, \phi, \ker(\Frob))\,,
	\\ \quot_{p,p}^{-1}\colon \calM_{H,\F_p} &\lto \calM_{H_p,\F_p}\,, \qquad  (E/S/\F_p, \phi) \mapsto (E^{(p)}/S/\F_p, \phi\circ (\cdot \tfrac{1}{p}) \circ\Frob ,\ker(\Ver))\,,
\end{aligned}
\end{equation}
which provide a description of $\calM_{H_p,\F_p}$ as the union of two copies of $\calM_{H,\F_p}$ nodally attached at the supersingular elliptic curves, see \cite[Section 5, Theorem 1.16 and Variant 1.18]{DeRa}. Here we apologize for an abuse of notations: $\pr_{p,p}^{-1}$ and $\quot_{p,p}^{-1}$ are not the inverse of $\pr_{p,p}= \pr_{p,\F_p}$ and $\quot_{p,p} = \quot_{p,\F_p}$, but just the right inverse.

\medskip

Every Deligne-Mumford stack $\mathcal{M}$ admits a coarse space $M$, in particular $\mathcal{M}_K$ has a coarse space $M_K$. Every map between stacks, such as $\pr_p$ and $\quot_p$, induces a map between coarse spaces. A key fact is that in our set-up the formation of the coarse space is compatible with base change. More precisely, let $\ell$ be any prime number not dividing $N$ (possibly it can also be a divisor of the $q_i$'s); the universal property of coarse spaces gives a map from the coarse space of $\mathcal{M}_{K,\mathbb{F}_{\ell}}$ to $M_{K,\mathbb{F}_{\ell}}:=M_K\times \mathbb{F}_{\ell}$. In \cite[Cor 6.10 page 145]{DeRa} it is shown that this map is an isomorphism (observe that if $\ell$ divides $N$ then this compatibility is not known for general $H$, see for instance \cite[Section 8.5]{KM}).

The definition of Picard group generalizes in a standard way from varieties to Deligne-Mumford stack, see e.g. \cite[Section 4.1.7]{Alper}.
We use the Picard group fo $\calM_K$ and the maps (\ref{eq:maps_Tl}) to define the \emph{Hecke operator} $T_{\ell}$. 

\begin{Definition}[Hecke operators]\label{def:Hecke}

With $K$ as in Equation \eqref{eq_K}, and for a prime $\ell$ which does not divide $M$, the Hecke operator $T_\ell$ is the map
\[
T_\ell := (\quot_\ell)_* \circ \pr_\ell^* \colon \Pic(\calM_{K}/\Z[1/N])  \to \Pic(\calM_{K}/\Z[1/N])\,,
\]
where the push-forward is a cycle push-forward.	 

The analogue definition works for the coarse space $M_K$.
\end{Definition}

Observe that the diamond operator $\langle d \rangle$, which is defined for every $d$ which does not divide $N$, commutes with $\pr_{\ell}$, $\quot_{\ell}$ and $T_{\ell}$.

The moduli space $\calM_{K, \F_\ell}$ is one dimensional, and we have the following celebrated description of the  restriction of the Hecke operator $T_{\ell}$ to the Jacobian $\Pic^0(\calM_{K,\F_\ell})$ of $\calM_{K, \F_\ell}$.

\begin{Theorem}[Eichler-Shimura relation] \label{Eichler-Shimura}
With the notations of Definition \ref{def:Hecke}, 
denoting by $T_{\ell,\F_\ell}$ the restriction of $T_{\ell}$ to either $\Pic^0(\calM_{K,\F_\ell})$ or $\Pic^0(M_{K,\F_\ell})$, we have
\[
T_{\ell,\F_\ell} = \Frob_* + \langle \ell\rangle_* \Frob^*
\]
where $\langle \ell\rangle$ is the diamond automorphism \eqref{diamond} and $\Frob$ is the Frobenius 
of the curve $\calM_{K,\F_\ell}$ or $M_{K,\F_\ell}$.
\end{Theorem}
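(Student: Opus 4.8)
The plan is to prove the Eichler--Shimura relation by reducing to a statement about correspondences on the special fiber $\calM_{K,\F_\ell}$ and then invoking the explicit description of $\calM_{K_\ell,\F_\ell}$ recalled just above. Recall that $T_\ell = (\quot_\ell)_* \circ \pr_\ell^*$ is defined via the two projections $\pr_\ell, \quot_\ell \colon \calM_{K_\ell} \to \calM_K$. Base-changing everything to $\F_\ell$ and using that the formation of coarse spaces commutes with base change at $\ell$ (as recalled from \cite[Cor.~6.10]{DeRa}), it suffices to work on $\calM_{K_\ell,\F_\ell}$. The key input is that $\calM_{K_\ell,\F_\ell}$ is the union of two copies of $\calM_{K,\F_\ell}$ glued nodally at the supersingular points, and that under this identification the two right inverses $\pr_{\ell,\ell}^{-1}$ and $\quot_{\ell,\ell}^{-1}$ from \eqref{eq:pieces_B0p} pick out the two components: $\pr_{\ell,\ell}^{-1}$ sends $(E,\phi)$ to $(E,\phi,\ker\Frob)$, while $\quot_{\ell,\ell}^{-1}$ sends it to $(E^{(\ell)},\dots,\ker\Ver)$.

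The steps I would carry out are as follows. First, identify the divisor $\pr_\ell^*[(E,\phi)]$ on $\calM_{K_\ell,\F_\ell}$: since the fiber of $\pr_\ell$ over a point parametrizes the choice of a rank-$\ell$ subgroup, over $\F_\ell$ the only such subgroups of $E$ are $\ker\Frob$ and $\ker\Ver$ (which coincide exactly at supersingular points), so set-theoretically this fiber meets each of the two components once. Second, I track what happens on each component. On the component $\mathrm{im}(\pr_{\ell,\ell}^{-1})$ the map $\pr_\ell$ is (generically) an isomorphism, so pulling back and then applying $(\quot_\ell)_*$ along this component computes the isogeny $E \mapsto E/\ker\Frob = E^{(\ell)}$, which on the level of divisor classes is exactly the action of $\Frob_*$. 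Third, on the other component $\mathrm{im}(\quot_{\ell,\ell}^{-1})$, the roles of $\pr_\ell$ and $\quot_\ell$ are swapped in the sense that $\pr_\ell$ restricted there is the quotient-by-$\ker\Ver$ map and $\quot_\ell$ is (generically) an isomorphism; composing, the contribution is $E \mapsto E/\ker\Ver$, i.e. the Verschiebung direction, which accounts for $\langle\ell\rangle_* \Frob^*$ — the diamond operator $\langle\ell\rangle$ appears because going around via Verschiebung multiplies the level structure $\phi$ by $\ell$ (this is visible in the formula $\phi \circ (\cdot\tfrac1\ell)\circ \Frob$ in \eqref{eq:pieces_B0p}). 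Summing the two contributions gives $T_{\ell,\F_\ell} = \Frob_* + \langle\ell\rangle_*\Frob^*$.

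To make the multiplicity count rigorous, I would note that $\pr_\ell$ has degree $\ell+1$ and that, component by component on $\calM_{K_\ell,\F_\ell}$, each of the two copies of $\calM_{K,\F_\ell}$ maps isomorphically under $\pr_\ell$ (respectively $\quot_\ell$); since the flat pullback $\pr_\ell^*$ of a Cartier divisor is compatible with this decomposition, the total $T_\ell$ is the sum of the two restricted operators with multiplicity one each, and no ramification correction is needed because we are computing on $\Pic^0$ and the nodes are codimension $\geq 1$. The main obstacle I anticipate is the careful bookkeeping at the supersingular points, where the two components of $\calM_{K_\ell,\F_\ell}$ are glued and where $\pr_\ell$ (resp. $\quot_\ell$) fails to be étale: one must check that passing to the coarse curve and to $\Pic^0$ absorbs these finitely many bad points, so that the decomposition of $T_{\ell,\F_\ell}$ into the two summands is valid as an identity of endomorphisms of $\Pic^0(M_{K,\F_\ell})$. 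This is exactly where the regularity of $\calM_{K}$ and the explicit local description from \cite[Chapter V]{DeRa} are used.
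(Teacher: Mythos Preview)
Your approach is correct and essentially identical to the paper's: both decompose $T_{\ell,\F_\ell} = (\quot_\ell)_* \circ \pr_\ell^*$ along the two irreducible components $\mathrm{im}(\pr_{\ell,\ell}^{-1})$ and $\mathrm{im}(\quot_{\ell,\ell}^{-1})$ of $\calM_{K_\ell,\F_\ell}$, then identify the two resulting compositions with $\Frob_*$ and $\langle\ell\rangle_*\Frob^*$ using the explicit formulas \eqref{eq:pieces_B0p}. The paper writes this more tersely as the formula $T_{\ell,\F_\ell} = (\quot_\ell\circ\pr_{\ell,\ell}^{-1})_* + (\pr_\ell\circ\quot_{\ell,\ell}^{-1})^*$ and dispenses with your worries about multiplicities and supersingular points: since the decomposition is by irreducible components of a correspondence and we work on $\Pic^0$, no extra bookkeeping at the nodes is needed.
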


\begin{proof}
We first prove the result on the stacks. Looking at the description of $\quot_{\ell,\F_\ell}$ and $\pr_{\ell,\F_\ell}$ on the two irreducible components of $\calM_{K_{\ell}}$, we can write
$$
T_{\ell,\F_\ell}=(\quot_{\ell,\F_\ell}\circ \pr_{\ell,\F_\ell}^{-1})_*\circ (\pr_{\ell,\F_\ell}\circ \pr_{\ell,\F_\ell}^{-1})^*+(\quot_{\ell,\F_\ell}\circ \quot_{\ell,\F_\ell}^{-1})_*\circ (\pr_{\ell,\F_\ell}\circ \quot_{\ell,\F_\ell}^{-1})^*
$$
Both $\pr_{\ell,\F_\ell}\circ \pr_{\ell,\F_\ell}^{-1}$ and $\quot_{\ell,\F_\ell}\circ \quot_{\ell,\F_\ell}^{-1}$ are the identity on $\Pic^0\calM_{K,\F_\ell}$, so we are left with 
$$
T_{\ell,\F_\ell}=(\quot_{\ell,\F_\ell}\circ \pr_{\ell,\ell}^{-1})_*+(\pr_{\ell,\F_\ell}\circ \quot_{\ell,\ell}^{-1})^*
$$
We observe that $(\quot_{\ell,\F_\ell}\circ \pr_{\ell,\ell}^{-1})_*=\Frob_*$ because it maps $(E,\phi)$ to $(E^{(\ell)},\Frob \circ \phi)$ . To conclude, $(\pr_{\ell,\F_\ell}\circ \quot_{\ell,\ell}^{-1})^*=\langle\ell\rangle_* \Frob^*$ because it maps $(E,\phi)$ to $(E^{(\ell)},\Frob\circ \phi \circ \left( \cdot\frac{1}{\ell}\right))$.

The property on the coarse spaces follows from their universal property.


\end{proof}

The spectral bounds in Theorem \ref{thm:spet1} will eventually be a consequence of the following bound, which in turn is a consequence of the above mentioned Eichler-Schimura relation and Weil's conjectures. A reference for the \'{e}tale cohomology used below is \cite{MilnEt}. 
For Weil's conjectures we refer to \cite{Del}.

\begin{Theorem}[Bound on the eigenvalues of the Hecke operator]\label{thm:bound}
With the above notations, let $\ell,\ell'$ be different primes not diving $M$, then the roots of the characteristic polynomial of the action $T_{\ell}$ on  $H^{i,\text{\'et}}(\Pic^0(M_{K, \F_{\ell}}),\Q_{\ell'})$ have complex absolute value less than or equal to $2\ell^{i/2}$.
\end{Theorem}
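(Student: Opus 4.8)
The plan is to reduce the statement to the classical estimate of Eichler and Shimura on Hecke eigenvalues via the Eichler--Shimura relation (Theorem~\ref{Eichler-Shimura}) together with the Weil conjectures for the Frobenius acting on \'etale cohomology of abelian varieties over finite fields. The only subtlety is that $\ell' = \ell$ is allowed, i.e.\ we may be computing $\ell$-adic cohomology in the same characteristic as the reduction; so some care is needed to stay in a setting where Weil's bounds apply.

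First I would pass from the Picard variety to its cohomology in a uniform way. For an abelian variety $B$ over $\F_\ell$, one has $H^{i,\text{\'et}}(B,\Q_{\ell'}) \cong \bigwedge^i H^{1,\text{\'et}}(B,\Q_{\ell'})$, and $H^{1,\text{\'et}}(B,\Q_{\ell'}) = H^{1,\text{\'et}}(B_{\overline{\F}_\ell},\Q_{\ell'})$ is dual to the rational ($\ell'$-adic) Tate module when $\ell' \neq \ell$, and is the dual of the first de Rham / crystalline realization (or just directly the cotangent-type space) handled by Weil more generally; in all cases its dimension is $2g$ with $g = \dim B = \dim \Pic^0(M_{K,\F_\ell})$. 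So it suffices to bound the eigenvalues of $T_\ell$ on $H^{1,\text{\'et}}(\Pic^0(M_{K,\F_\ell}),\Q_{\ell'})$ by $2\sqrt{\ell}$, and the general $i$ bound $2\ell^{i/2}$ follows by functoriality of $T_\ell$ on the exterior power (the eigenvalues on $\bigwedge^i$ are $i$-fold products of eigenvalues on $H^1$).

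Next, the heart: on $\Pic^0(M_{K,\F_\ell})$ the Eichler--Shimura relation gives $T_{\ell,\F_\ell} = \Frob_* + \langle \ell\rangle_*\Frob^*$. Dualizing (or applying Poincar\'e duality / the Rosati involution coming from a polarization), $\Frob^*$ is, up to the invertible operator $\langle\ell\rangle_*$, the Verschiebung $\Ver_* = \ell\cdot\Frob_*^{-1}$ on $H^1$; more precisely $\Frob_* \circ (\langle\ell\rangle_*\Frob^*) = \langle \ell\rangle_* \cdot [\ell]$ acts as multiplication by $\ell$ (times the finite-order automorphism $\langle\ell\rangle_*$, which commutes with everything and has eigenvalues roots of unity). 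Hence on each eigenspace, if $\alpha$ is an eigenvalue of $\Frob_*$ on $H^1$ then the corresponding eigenvalue of $T_{\ell,\F_\ell}$ is $\alpha + \zeta\ell/\alpha$ for a root of unity $\zeta$. By the Weil conjectures (the Riemann hypothesis for abelian varieties over finite fields, due to Weil, valid for every prime $\ell'$ including $\ell' = \ell$ via crystalline cohomology) we have $|\alpha| = \sqrt{\ell}$, so $|\alpha + \zeta\ell/\alpha| \le |\alpha| + \ell/|\alpha| = 2\sqrt{\ell}$. Finally, since $M_K$ is defined over $\Z[1/N]$ and $\ell \nmid N$, the reduction $M_{K,\F_\ell}$ is a well-defined proper curve and $\Pic^0(M_{K,\F_\ell})$ is an abelian variety over $\F_\ell$ (at worst after normalization, if $M_K$ is singular over $\F_\ell$ one replaces $\Pic^0$ by the abelian part of the Picard scheme; its cohomology is still where $T_\ell$ acts), so the hypotheses of Weil's theorem are met.

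\textbf{Main obstacle.} The delicate point is precisely the case $\ell' = \ell$: \'etale cohomology with $\Q_\ell$-coefficients in characteristic $\ell$ is not the right object, so one must either reinterpret $H^{i,\text{\'et}}(\Pic^0(M_{K,\F_\ell}),\Q_\ell)$ as the appropriate $\ell$-adic/crystalline realization on which Frobenius still satisfies the Weil bound, or argue that in the intended application $\ell' \neq \ell$ always and simply note this. I expect the cleanest route is to observe that for an abelian variety $B/\F_\ell$ the characteristic polynomial of Frobenius on $H^1$ is independent of the chosen cohomology theory (it equals the characteristic polynomial of $\Frob$ acting on $B$, an integer polynomial), so the eigenvalue bound $|\alpha| = \sqrt\ell$ holds regardless; this makes the statement and proof uniform in $\ell'$. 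The rest is the purely formal manipulation of $\Frob_* + \langle\ell\rangle_*\Frob^*$ and passage to exterior powers described above.
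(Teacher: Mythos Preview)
Your overall strategy---Eichler--Shimura plus Weil---is exactly the paper's, and your treatment of the $i=1$ case is correct and matches the paper's argument. The genuine gap is in your reduction from $H^i$ to $H^1$. You write that ``the general $i$ bound $2\ell^{i/2}$ follows by functoriality of $T_\ell$ on the exterior power (the eigenvalues on $\bigwedge^i$ are $i$-fold products of eigenvalues on $H^1$)''. But $i$-fold products of numbers bounded by $2\sqrt\ell$ are only bounded by $(2\sqrt\ell)^i = 2^i\ell^{i/2}$, not $2\ell^{i/2}$. So your reduction yields a strictly weaker bound than the one stated.

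The paper avoids this by \emph{not} reducing to $H^1$: it applies the Weil bound directly on $H^i$, where the eigenvalues of $\Frob_*$ (which is the Frobenius of the abelian variety $X=\Pic^0(M_{K,\F_\ell})$) have absolute value exactly $\ell^{i/2}$. Since $\Frob_*$ and $\Frob^*$ commute and $\Frob^*\circ\Frob_* = [\ell]$, which acts as $\ell^i$ on $H^i$, the eigenvalues of $\Frob^*$ on $H^i$ also have absolute value $\ell^{i/2}$; together with $|\langle\ell\rangle|=1$ this gives $|T_\ell\text{-eigenvalue}| \le \ell^{i/2}+\ell^{i/2} = 2\ell^{i/2}$. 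The point is that the decomposition $T_\ell = \Frob_* + \langle\ell\rangle\Frob^*$ is a sum of two commuting operators, and the Weil bound applies to each summand separately on every $H^i$; taking exterior powers of $T_\ell$ itself loses this additive structure.

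Your discussion of the case $\ell'=\ell$ is a fair caveat, but the paper simply cites Deligne and does not address it; in the paper's applications one always has $\ell'\neq\ell$, so this is not an obstacle to the intended use.
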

\begin{proof}
The curve $M_{K, \F_\ell}$ is proper and smooth, hence $X:=\Pic^0(M_{K, \F_\ell})$ is an abelian variety defined over $\F_{\ell}$. Weil's conjectures, proved by Deligne  \cite[Theoreme 1.6]{Del}, implies that the roots of the characteristic polynomial of the action $\Frob_X$, which is the Frobenius of $X$, on  $H^{i,\text{\'et}}(X,\Q_{\ell'})$ have complex absolute value $\ell^{i/2}$ (in loc. cit. Deligne uses the term variety to denote also possibly non-irreducible reduced schemes).

The Frobenius $\Frob_X$ is the endomorphism $\Frob_*$ appearing in Theorem \ref{Eichler-Shimura}.
The maps $\Frob_*$ and $\Frob^*$ commutes, $\Frob^*\circ \Frob_*$ is the multiplication by $\ell$, hence also $\Frob_*$ has eigenvalues of complex absolute value $\ell^{i/2}$.
The map $\langle \ell \rangle$ is an automorphism of finite order of $X$, hence its eigenvalues are roots of unity.

Since the maps $\Frob_*$, $\Frob^*$ and $\langle \ell \rangle$ commute,  the claim follows from Theorem \ref{Eichler-Shimura}. 
\end{proof}

\
We close this section by introducing some automorphisms of modular curves. They will be related to the automorphisms of isogeny graphs from Section \ref{s:aut_G}, see Theorem \ref{thm:riassunto_confronti}.

The following automorphisms will correspond to the automorphisms from Section \ref{s:aut_G} with the same name.

\begin{Definition}[Matricial automorphisms]\label{def:aut_modular}
	Given a level structure $K= H\times \prod B_0(q_i)$ as in \eqref{eq_K}, for any element $g$ in the normalizer $\mathcal N_H<\GLmod{N}$ of $H$, the automorphism $\langle g\rangle\colon \calM_{K}\to \calM_{K}$ maps a curve $(E,\phi, C_1, \ldots C_r)$ to $(E, \phi\circ g, C_1, \ldots C_r)$. \newline
	In particular, for every $d$ in $(\Z/N\Z)^{\times}$, the diamond operator $\langle d \rangle $ in \eqref{diamond} is the automorphism associated with the diagonal matrix $ \smt d\,\,d$.
\end{Definition}

\begin{Definition}[Atkin-Lehner automorphisms]\label{def:AL}
Given $K= H\times \prod B_0(q_i)$ as in \eqref{eq_K}, each $q_i$ yields the Atkin-Lehner map
\begin{equation}\label{eq_AL}
w_{q_i}\colon \calM_K\to\calM_K\,,\quad (E,\phi, C_1, \ldots C_r) \mapsto \left(E/C_i, \pi_i\circ \phi, \pi_i(C_1), \ldots , E[q_i]/C_i, \ldots, \pi_i (C_r)\right) 
\end{equation}
where $\pi_i \colon E \to E/C_i$ is the projection.
\end{Definition}

For a level structure $H_p$, the Atkin-Lehner automorphism $w_p$ plays a special role, and it is related to the Galois or Frobenius automorphism of the graphs. We will call it Fricke automorphism because it is equal to the classical Fricke involution when $K=B_0(p)$, and to highlight its special role.

\begin{Definition}[Fricke automorphism]\label{def:fricke}
For a level structure $H_p$, the Fricke automorphism $\sigma \colon \calM_{H_p}\to \calM_{H_p}$ maps a curve $(E,\phi, C)$ to $(E/C,\pi \circ \phi, E[p]/C)$, where $\pi\colon E \to E/C$ is the projection.
\end{Definition}

\section{Relation between modular curves and isogeny graphs and proof of Theorem \ref{thm:spet1}}\label{S:relation}

In this section we explain the relation between the isogeny graph, together with its adjacency matrix, and the coarse moduli space $M_{H_p,\mathbb{F}_p}$, together with the Hecke operator $T_{\ell}$ (see Remark \ref{rem:analogue_stacks} for the analysis on the stack).

We fix $p,N,H$ as in Definition \ref{def:graph}: $p$ is a prime number, $N$ is an integer not divisbile by $p$, and $H$ is a subgroup of $GL_2(\Z/N\Z)$. The maps \eqref{eq:maps_Tl} give the desingularization
\begin{equation}\label{eq:res}    
\pr_{p,p}^{-1} \sqcup \quot_{p,p}^{-1}\colon M_{H,\F_p} \sqcup M_{H,\F_p} \to M_{H_p,\F_p}\,.
\end{equation}

Since the singularities of $M_{H_p,\F_p}$ are nodal, the pull-back induces an exact sequence 
\begin{equation}\label{eq:T_1}
0 \to T \to \Pic^0\left(  M_{H_p,\F_p}\right) \lto \Pic^0\left( M_{H,\F_p}\right)^{ 2} \to 0
\end{equation}
with $T$ the toric part of the semi-abelian variety $\Pic^0\left( M_{H_p,\F_p} \right)$.

To analyze $T$, we need first to count the connected components of $M_{H,\overline{\F}_p}$. To this end, recall that the Weil invariant of a level structure, see Definition \ref{def:Weil_invariant}, gives a morphism 
$$
w\colon M_{H}  \to  \Spec \left(\mathbb{Z}[\tfrac 1N, \zeta_N]^{\det(H)}\right)
$$
where $\zeta_N$ is a primitive $N$-th root of the unity, see \cite[Chapter 3, Subsection 3.20]{DeRa}, and the exponentiation to $\det(H)$ means that we take invariants of $\det(H) \subset (\Z/N\Z)^\times = \Gal(\Q(\zeta_N)/\Q)$. If we base change to a field of characteristic prime to $N$, the fibers of $w$ are irreducible, see \cite[Chapter 3, Corollary 5.6]{DeRa}. In particular, there is a bijection between the connected components of $\calM_{H,\overline{\F}_p}$  and  $R_H=\mu_N^\times(\overline{\F}_p)/\det(H)$, see Definition \ref{def:Weil_invariant} and above.

Call these components $M_{\xi}$, for $\xi$ in $R_H$.
The discussion below Equation \eqref{eq:pieces_B0p} implies that the map $\pr_{p,\F_p}$ is surjective and gives a bijection between the connected components of $M_{H,\overline{\F}_p}$ and the ones of $M_{H_p,\overline{\F}_p}$.

By definition, points on $T$ correspond to line bundles $L$ over $M_{H_p}$ such that both $\left(\pr_{p,p}^{-1}\right)^* L$ and $\left(\quot_{p,p}^{-1}\right)^*L$ are trivial. As recalled in Appendix \ref{S:nodal}, to describe such an $L$ we need to give a scalar for each node of $ M_{H_p,\F_p}$, modulo a diagonal action of $\Gm$ for every connected component $M_{\xi}$. Recall that the nodes of $ M_{H_p,\F_p}$ are the points representing supersingular curves.  Call $V_\xi$ the set of vertices of $G=G(p,\ell,H)$ with Weil invariant $\xi$, which are in turn the points of $M_{\xi}$ such that $\pr_{p,p}^{-1}(v)$ is singular in $M_{H_p,\overline{\F}_p}$. 
With this notation we have a canonical isomorphism
\begin{equation}\label{eq:T}
T\cong\,\, \prod_{\xi \in R_H} T_{\xi} \quad \textrm{with} \quad T_{\xi}:= \Gm^{V_\xi}/\Gm \,.
\end{equation}

For the groups of characters $T^\vee  := \Hom(T,\Gm)$ and $T_\xi^\vee := \Hom(T_\xi,\Gm)$,  we obtain
\begin{equation}\label{eq:T_dual}
\begin{aligned}
	T^\vee  = \bigoplus_{\xi \in R_H} T_{\xi}^\vee \quad \textrm{with} \quad T_{\xi}^\vee \cong \left\{ x \in \Z^{V_\xi} \,:\, \textstyle \sum_{v\in V_\xi} x_v = 0  \right\} \,.
\end{aligned}
\end{equation}

This identifies $T^\vee$ with a submodule of $\Z^V$ and, comparing with Remark \ref{rem:genratori_nuclei}, we have a canonical isomorphism $T^\vee_\xi \otimes \Q = \ker(w_*)$. Moreover, considering the decomposition $R_H = C_1 \sqcup \ldots \sqcup C_n$  of $R_H$ into the orbits of $\xi \to \xi^\ell$, as in the discussion below Proposition \ref{prop:morphism}, we have for each $C_i$ a canonical isomorphism
\begin{equation}\label{eq:F_i}
\bigoplus_{\xi \in C_i} T^{\vee}_{\xi}\otimes \Q= \ker(w_{i,*}) \,,
\end{equation}
where $\ker(w_{i,*})$ is the subspace of $\Q^V$ described in Remark \ref{rem:genratori_nuclei}.

\begin{Theorem}\label{thm:comparison}

Let $G = G(p,\ell,H)$ be the graph in Definition \ref{def:graph}, with $G_i$ the subgraphs defined above Theorem \ref{thm:main2}, and let $T = \prod_{\xi \in R_H} T_\xi$ be the maximal torus of $\Pic^0(M_{H_p,\overline{\F}_p})$ , as in Equations \eqref{eq:T_1} and \eqref{eq:T}. 

For each $i$, the isomorphism \eqref{eq:F_i} conjugates the action of the Hecke operator $T_{\ell}$ to the adjoint action of the adjency matrix of the graph $G_i$: i.e. the following diagram is commutative
$$
\begin{tikzcd}
	\displaystyle\bigoplus_{\xi \in C_i} T^{\vee}_{\xi}\otimes \Q  \ar[d,equals]  \arrow[rr, "T_\ell"]
 && 
	\displaystyle \bigoplus_{\xi \in C_i} T^{\vee}_{\xi}\otimes \Q  \ar[d,equals] 
	\\ \ker(w_{i,*}) \arrow[rr, "A_i^*"] && \ker(w_{i,*}) 
\end{tikzcd}
$$

where $\ker(w_{i,*})$ is the subspace of $\Q^V$ described in Remark \ref{rem:genratori_nuclei}, and $A^*$ is the  adjoint of the adjacency matrix $A$ with respect to the Hermitian form \eqref{eq:adj}, see also Proposition \ref{prop:diag}. 

\end{Theorem}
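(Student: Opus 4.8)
\emph{Sketch of proof.} The plan is to compute the endomorphism of the character group $T^\vee$ induced by $T_\ell=(\quot_\ell)_*\circ\pr_\ell^{*}$ node by node, using the description of line bundles on a semistable curve in terms of gluing scalars at the nodes (Appendix~\ref{S:nodal}), and to recognise the outcome as the operator $A^{*}$ in the form of Remark~\ref{rkm_A_star}. First, functoriality: since $\ell\nmid Np$, the curve $M_{H_p\times B_0(\ell),\F_p}$ is again semistable — two copies of $M_{H\times B_0(\ell),\F_p}$ glued transversally at their supersingular points — and the maps $\pr_\ell,\quot_\ell\colon M_{H_p\times B_0(\ell),\F_p}\to M_{H_p,\F_p}$ carry irreducible components to irreducible components and nodes to nodes; one uses here that a degree-$\ell$ isogeny is étale, so it preserves supersingularity and sends the Frobenius kernel to the Frobenius kernel, whence the nodal structure is respected. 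Therefore $\pr_\ell^{*}$ and $(\quot_\ell)_*$ are compatible with the sequence \eqref{eq:T_1}, preserve the toric parts, and $T_\ell$ acts on $T=\prod_\xi T_\xi$, hence on $T^\vee\subset\Z^V$ through \eqref{eq:T_dual}; recall that a point of $T_\xi$ is a tuple $(c_v)_{v\in V_\xi}\in\Gm^{V_\xi}$ modulo the diagonal, evaluated by $x\in T^\vee_\xi$ as $\prod_v c_v^{x_v}$.

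Next, I would unwind the two maps on gluing data. The nodes of $M_{H_p\times B_0(\ell),\F_p}$ are the coarse supersingular points $(E,\phi,C)$, i.e.\ precisely the edges of $G$: the edge $(E,\phi,C)$ has source $\pr_\ell(E,\phi,C)=(E,\phi)$ and target $\quot_\ell(E,\phi,C)=(E/C,\pi_C\circ\phi)$. Both $\pr_\ell^{*}$ and $(\quot_\ell)_*$ admit explicit descriptions on the dual graphs: $\pr_\ell^{*}$ repeats a scalar $c_v$ at each node over $v$, while $(\quot_\ell)_*$, being a cycle push-forward, combines the scalars over a fixed node of the target weighted by the ramification indices of $\quot_\ell$ (resp.\ of $\pr_\ell$) at the supersingular points on the coarse spaces. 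Combining them, $T_\ell$ sends a function $x=(x_v)_{v\in V}$ to $v'\mapsto\sum_{\alpha\colon v\to v'}w(\alpha)\,x_v$, the sum over edges $\alpha$ of $G$ with target $v'$, for weights $w(\alpha)$ that one then computes: since $\pr_\ell$ and $\quot_\ell$ are étale on the stacks $\calM$ (degree $\ell+1$, level prime to $p$), their ramification at the coarse point $\alpha=(E,\phi,C)$ is measured by the indices $[\Aut(E,\phi):\Aut(E,\phi,C)]$ and $[\Aut(E/C,\pi_C\phi):\Aut(E,\phi,C)]$. Feeding this into the duality of isogenies exactly as in the proof of Proposition~\ref{prop:diag} — which turns degree-$\ell$ morphisms $(E,\phi)\to(E',\phi')$ into degree-$\ell$ morphisms $(E',\phi')\to(E,[\ell]\phi)$, i.e.\ trades a flip of edges for the diamond twist $\langle\ell\rangle$ — identifies the resulting weighted count with the expression $a_{i}\sum_j a_{j}^{-1}(\cdots)$ of Remark~\ref{rkm_A_star}, so that $T_\ell$ on $T^\vee$ agrees with $A^{*}=\langle\ell^{-1}\rangle A$ on $\C^V$. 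Finally, since $\pr_\ell$ preserves the Weil invariant while $\quot_\ell$ raises it to the $\ell$-th power (\cite[Chapter~III, Proposition~8.2]{Sil}, as in Proposition~\ref{prop:morphism}), $T_\ell$ maps $T_\xi^\vee$ into $T_{\xi^{\ell}}^\vee$, hence preserves each summand $\bigoplus_{\xi\in C_i}T_\xi^\vee=\Ker(w_{i,*})$ of \eqref{eq:F_i}; restricting the identification above to this summand yields the asserted commutative square with $A_i^{*}$.

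The step I expect to be the main obstacle is the weight bookkeeping in the middle paragraph: one must verify that $(\quot_\ell)_*\circ\pr_\ell^{*}$ on node scalars produces exactly the factors $a_ia_j^{-1}$ of Remark~\ref{rkm_A_star}, and not $A$ or another normalisation, which requires tracking with care (i) the ramification of the coarse-space maps $\pr_\ell,\quot_\ell$ at the supersingular points carrying extra automorphisms; (ii) the compatibility of coarse-space formation with reduction mod $p$ (the Deligne–Rapoport result quoted in Section~\ref{S:modular_curve}), needed even to interpret $\Pic^0(M_{H_p,\F_p})$ and its torus via the nodal structure; and (iii) the precise way the duality of isogenies intertwines ``edges out of $v'$'' with ``edges into $v'$'' modulo the diamond operator $\langle\ell\rangle$. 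A cleaner route, indicated in Remark~\ref{rem:analogue_stacks}, is to run the whole computation first on the stacks $\calM$, where $\pr_\ell$ and $\quot_\ell$ are finite flat of degree $\ell+1$ and $T_\ell$ is visibly the naive adjacency operator of the stacky graph, and only then descend to the coarse spaces by comparing the standard Hermitian form with the form \eqref{eq:Hermitian} weighted by $|\Aut(E,\phi)|$.
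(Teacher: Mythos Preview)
Your sketch is correct and follows essentially the same route as the paper: the paper packages the ``gluing-scalar'' computation you describe into a general lemma on correspondences between nodal curves (Proposition~\ref{prop:correspondence_T}), applies it with $F=\pr_\ell$, $G=\quot_\ell$ to obtain $T_\ell(E_i,\phi_i)=\sum_{(E_j,\phi_j,C)\in\quot_\ell^{-1}(E_i,\phi_i)}\ord_{(E_j,\phi_j,C)}(\quot_\ell)\cdot(E_j,\phi_j)$, and then computes the orders $\ord(\quot_\ell)$ by writing $M_H=M_K/G$ for an auxiliary rigid level $K$ --- precisely your point~(i). One small simplification relative to your outline: since the formula above already sums over the fibre of $\quot_\ell$, it is directly a weighted count of edges \emph{into} $(E_i,\phi_i)$, so the match with the expression for $A^*$ in Remark~\ref{rkm_A_star} is immediate and the appeal to duality of isogenies (your detour through Proposition~\ref{prop:diag}) is not needed at this step.
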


\begin{proof}
Let $V$ be the set of vertices of $G$. Equation \eqref{eq:T_dual} gives an embedding of $T^\vee$ and  $\bigoplus_{\xi \in C_i} T^{\vee}_{\xi}$ inside $\Z^V$.

In the Appendix \ref{S:nodal} we described the Picard group of a nodal curve, and how morphism such as the Hecke operator act on it (it is a rather general theory, and this is why we have discussed it in an appendix). In particular, Proposition \ref{prop:correspondence_T} tells us that $T_\ell\colon T^\vee \to T^\vee$ (and in particular also its restriction to $\bigoplus_{\xi \in C_i} T^{\vee}_{\xi}$) extends to a map $T_\ell \colon \Z^V \to \Z^V$. It is enough to prove the commutativity of the diagram 
$$
\begin{tikzcd}
	\Z^V \otimes\Q  \arrow[d,"\sim" labl]  \ar[rr, "T_\ell \otimes \Q"]&& 
	\Z^V \otimes \Q  \arrow[d,"\sim" labl] 
	\\ \Q^V \arrow[rr, "A^*"] && \Q^V
\end{tikzcd}\,.
$$
In particular, it is enough checking the commutativity on the elements $(E_i,\phi_i)$ of the canonical basis of $\Z^V$. 

On $M_{H_p}$ we have $T_\ell = (\quot_\ell)_* \circ \pr_\ell^*$. Writing $M_{H_p,\F_p}$ as the union of two copies of $M_{H,\F_p}$, Proposition \ref{prop:correspondence_T} allows us to describe $T_\ell$ on $\Z^V$ only looking at what is happening on one of the two copies of $M_{H,\F_p}$. In particular,  for each supersingular point $(E_i,\phi_i)$ on $M_{H}(\overline{\F}_p)$ we have
\begin{equation}\label{eq:Tl_chars}
	\begin{aligned}
		T_{\ell}(E_i,\phi_i) = \!\!\sum_{(E_j,\phi_j,C)} \!\! \mathrm{ord}_{(E_j,\phi_j,C)}(\quot_\ell) {\cdot} \pr_{\ell}(E_j,\phi_j,C)   = \!\!\sum_{(E_j,\phi_j,C) } \!\! \mathrm{ord}_{(E_j,\phi_j,C)}(\quot_\ell) \cdot (E_j,\phi_j)\,,
	\end{aligned}
\end{equation}
where $(E_j,\phi_j,C)$ varies in the fiber $\quot_\ell^{-1}(E_i,\phi_i) \subset M_{H_\ell}(\overline{\F}_p)$.

To compute the orders $\ord(\quot_\ell) $ we start by noticing that when $H$ structures are rigid (i.e. when $\Aut(E,\phi) = \{ 1\}$ for each $(E,\phi)$ in $M_H(\overline{\mathbb{F}}_p)$), then $\ord (\quot_\ell)=1$: indeed $\quot_\ell$ has degree $\ell{+}1$ and duality of isogenies gives a bijection between the set of points $(E_j,\phi_j,C)\in \quot_{\ell}^{-1}(E_i,\phi_i)$ and the set of points $(E_i,\tfrac 1\ell \phi_i,C)\in M_{H_\ell}(\overline{\F}_p)$ which has cardinality $\ell{+}1$ because $\Aut(E_i, \phi_i)$ is trivial, hence for different subgroups $C_1,C_2 \subset E_i[\ell]$ the triples $(E_i,\tfrac 1\ell \phi_i,C_1)$ and $(E_i,\tfrac 1\ell \phi_i,C_2)$ are not isomorphic.

For general $H$ structure, even not rigid, write $M_{H,\F_p}=M_{K,\F_p}/G$ for $K$ a rigid level structure and $G$ a finite group, with quotient map $\pi_G$ (for example take $K$ to be full-level structures of level $3N$, see \cite[Corollary 4.7.2]{KM}, and $G<\GLmod{3N}$ to be the inverse image of $H$ under reduction modulo $N$). Analogously we have $M_{H_\ell}=M_{K_\ell}/G$, with quotient map $\pi_{G,\ell}$. Now, given $(E_j,\phi_j,C)$ supersingular point on $M_{H_{\ell}}$, we can lift it to a point $(E_j,\psi_j,C)$ on $M_{K_{\ell}}$, and, using the commutation $\quot_\ell\circ\pi_{G,\ell} = \pi_G\circ\quot_\ell$, we compute
$$ 
\begin{aligned}
	\ord_{(E_j,\phi_j,C)} \quot_{\ell} & = \frac{\ord_{(E_j,\psi_j,C)} (\quot_\ell \circ \pi_{G,\ell})}{\ord_{(E_j,\psi_j,C)} \pi_{G,\ell}} = \frac{\ord_{(E_j,\psi_j,C)} ( \pi_{G}\circ \quot_\ell)}{\ord_{(E_j,\psi_j,C)} \pi_{G,\ell}} 
	\\ & =  \frac{\ord_{(E_j,\psi_j,C)} (\quot_\ell) \cdot \ord_{(E_i,\psi_i)}\pi_G }{\ord_{(E_j,\psi_j,C)} \pi_{G,\ell}} = \frac{1 \cdot |\Aut (E_i,\phi_i)|}{|\Aut(E_j,\phi_j,C)|} \,.    
\end{aligned}
$$
Substituting in Equation \eqref{eq:Tl_chars}, and using the definition of $a_i$ in \eqref{eq:Hermitian}, we get
\begin{equation}\label{eq:boh}
	T_{\ell}(E_i,\phi_i)  = \sum_{(E_j,\phi_j,C) }  \frac{ |\Aut (E_i,\phi_i)|}{|\Aut(E_j,\phi_j,C)|} \cdot (E_j,\phi_j) = a_i \sum_{(E_j,\phi_j,C) }  |\Aut(E_j,\phi_j,C)|^{-1} \cdot (E_j,\phi_j)\,,
\end{equation}
where the sums run over the isomorphism classes of triples $(E_j,\phi_j,C)\in M_{H_\ell}(\overline{\F}_p)$ such that  $\quot_\ell(E_j,\phi_j,C):= (E_j/C,\pi_C\circ\phi_j)$ is isomorphic to $(E_i,\phi_i)$. We want to compare the last term of Equation \eqref{eq:boh} with the description of $A^*$ given in Remark \ref{rkm_A_star}. 

Observe that $(E_j,\phi_j)$ appears in the right hand side of \eqref{eq:boh} if and only if there is an arrow $(E_j,\phi_j) \to (E_i,\phi_i)$. The number of such arrows equals the number of nontrivial subgroups $C\subset E_i[\ell]$ such that $ (E_j/C,\pi_C\circ\phi_j)\cong (E_i,\phi_i)$. Two triples $(E_j,\phi_j,C_1)$ and $(E_j,\phi_j,C_2)$ give the same element of $M_{H_\ell}(\overline{\F}_p)$ if and only if there exist $\sigma$ in $\Aut(E_j,\phi_j)/\Aut(E_j,\phi_j,C_1)$ such that $\sigma(C_1) = C_2$. Such $\sigma$, if it exists, is unique because we quotiented out exactly by the stabilizer of $(E_j,\phi_j,C_1)$ in $\Aut(E_j,\phi_j)$. We conclude that the coefficient of $(E_j,\phi_j)$ in the right hand side of Equation \eqref{eq:boh} is  
\[
a_i \sum_{\substack{0 \subsetneq C\subsetneq E_j[\ell] \text{ s.t.} \\ (E_j/C,\pi_C\circ\phi_j)\cong (E_i,\phi_i)}} |\Aut(E_j,\phi_j)/\Aut(E_j,\phi_j,C)|^{-1}  |\Aut(E_j,\phi_j,C)|^{-1}\,.
\]
As in Remark \ref{rkm_A_star} we have $a_i=|\Aut(E_j,\phi_j)|$, we have the claim.

\end{proof}

 The automorphisms of modular curves act on the Picard groups, and hence on $T^{\vee}$, via pull-back. The following proposition explains how the canonical isomorphism \eqref{eq:F_i} 
relates the automorphisms of the isogeny graph and the automorphism of the modular curve.
\begin{Proposition}\label{prop:aut}
The canonical isomorphism
\[
 T^\vee  \otimes \Q  = \oplus_{i=1}^n \ker(w_{i,*})\,,   
\]
which is the direct sum over $i$ of the  isomorphisms from Equation \eqref{eq:F_i}, conjugates the Galois map \ref{def:Galois_inv} to the Fricke map \ref{def:fricke}, and the automorphisms from Section \ref{s:aut_G} to the automorphisms from \ref{def:aut_modular} and \ref{def:AL} with the same name.
\end{Proposition}
\begin{proof}
    This is an application of Proposition \ref{prop:correspondence_T} in the case where $G$ is the identity of $M_{H_p,\F_p}$ and $F$ is one of the automorphisms of $M_{H_p,\F_p}$ we have considered. In particular,it is enough checking that the action of matricial automorphisms, respectively Atkin-Lehner automorphisms and Fricke map, on the supersingular points of  $M_{H_p,\F_p}$  is exactly the action of the corresponding automorphisms of the graph. In the first two cases this is straight forward. For the Fricke map $\sigma$, we observe that, given a point $(E,\phi,\ker(\Frob_p))$ of $M_{H_p,\mathbb{F}_p}(\overline{\mathbb{F}}_p)$ representing a supersingular elliptic curve, we have 
    \[
     \sigma \big(E,\phi,\ker(\Frob_p)) = \big(E/\ker(\Frob_p), \pi\circ \phi, E[p]/\ker(\Frob_p)\big) \,.
    \]
    This is equal to $ \big(E^\sigma, \sigma\circ \phi, \ker \Frob_p\big)$ since $E/\ker(\Frob_p)$ is supersingular, hence $E[p]/\ker(\Frob_p)$ must be equal to the kernel of its Frobenius, and the quotient map $\pi\colon E \to E/\ker(\Frob_p)$ is exactly the Frobenius map $\Frob_p \colon E \to E^\sigma$. We conclude that the action of the Fricke map on the points of $M_{H_p,\mathbb{F}_p}$ representing superingular elliptic curves is equal to the Galois action on the corresponding points of the graph.

\end{proof}

\begin{Remark}[Analogous construction on the moduli stack]\label{rem:analogue_stacks}
One could carry out the constructions of this section on the stack $\mathcal{M}_{H_p,\mathbb{F}_p}$ rather than the coarse space $M_{H_p,\mathbb{F}_p}$. Observe that when $p \geq 5$, so the characteristic of the base field does not divide the automorphism group, this stack is a twisted curve, as in \cite[Section 2]{AOV}. Twisted curves are also called stacky curves in the literature. At least in these cases, in loc. cit. is explained how the Picard group is an extension of the Picard group of the coarse space by a finite \'{e}tale group over $\mathbb{F}_p$ related to the automorphism groups. The study of this extension might give further information about isogeny graphs.
\end{Remark}

\begin{Definition}\label{def:A}
Let $p$ be a prime number, $N$ is an integer coprime with $p$, and $H$ is a subgroup of $GL_2(\Z/N\Z)$ as in Definition \ref{def:graph}, 
let $\calA = \calA_{H,p}$ over $\Z[1/N]$ be the connected component of the identity of the kernel of the map
$$
\left(\pr_{p,*}, \quot_{p,*}\right) \colon \Pic^0\left(M_{H_p}\right)  \lto \Pic^0\left(M_{H}\right)\times \Pic^0\left(M_{H}\right)
$$ 
\end{Definition}

The action of the Hecke operator $T_{\ell}$, and the automorphism from Definitions \ref{def:fricke}, \ref{def:aut_modular} and \ref{def:AL} preserve $\calA$, hence we can and do consider their restriction to $\calA$.

\begin{Proposition}
Let $p$ be a prime number, $N$ is an integer coprime with $p$, and $H$ is a subgroup of $GL_2(\Z/N\Z)$ as in Definition \ref{def:graph}. The fiber $\calA_{\F_p}$ is equal to the torus $T$ introduced in Equation \eqref{eq:T_1}.
\end{Proposition}
\begin{proof}
Since $\Pic^0\left(M_{H,\mathbb{F}_p}\right)$ is an abelian variety, and there are no non-trivial map from a torus to an abelian variety, we have the inclusion $T\subseteq \calA_{\F_p}$.

Since $\dim T= \dim \Pic^0\left(M_{H_{p, \ol{F}_p}}\right)-\dim \left(\Pic^0\left(M_{H_{\ol{F}_p}}\right)\times \Pic^0\left(M_{H_{\ol{F}_p}}\right)\right)$, to conclude we have to show that the reduction modulo $p$ of $\left(\pr_{p,*}, \quot_{p,*}\right)$ is surjective. 

We look at the resolution given by Equation \eqref{eq:res} and we consider the map 
\[
\lambda \colon \Pic^0(M_{H,\F_p})^{ 2} \lto \Pic^0(M_{H_p,\F_p})\,,\quad (x,y)\longmapsto (\pr_{p,p}^{-1})_*(x)+ (\quot_{p,p}^{-1})_*(y)\,.
\] 
By the same arguments used in the proof of Theorem \ref{Eichler-Shimura}, (or see also the diagram in \cite[page 145]{DeRa}), we have that 
$\left(\pr_{p,*}, \quot_{p,*}\right)_{\F_p}
\circ \lambda$ equals $\smt{\Id}{\Frob}{\Frob}{\Id}$ as endomorphism of $\Pic^0(M_{H,\F_p})^{ 2}$; this endomorphism is surjective, hence the same is true for $\left(\pr_{p,*}, \quot_{p,*}\right)_{\F_p}$.

\end{proof}

The following key technical lemma uses the theory of N\'eron models, a general reference for N\'eron models is \cite{BLR}. 

\begin{Lemma}\label{lem:Neron_exact}
Fix $p,N,H$ as in Definition \ref{def:graph} and let $\calA = \calA_{H_p}$. Then, for every endomorphism $F$ of $\calA$ and every prime number $q$ not dividing $N$, we have
$$\dim (\mathrm{Im} \left(F|_{\calA_{\C}}\right)) =  \dim \left(\mathrm{Im}\left( F|_{\calA_{\overline{\F}_q}}\right)\right) \,.$$
\end{Lemma}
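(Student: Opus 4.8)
The statement is a "constancy of rank across fibers" result for an endomorphism $F$ of the abelian scheme (more precisely semi-abelian over the base, but $\calA$ is an abelian scheme away from $p$ and has toric fiber at $p$) $\calA = \calA_{H_p}$ over $\Z[1/N]$. The natural tool, as flagged in the statement, is the theory of N\'eron models together with the good reduction properties of $\calA$ away from $p$ and the behavior at $p$. First I would reduce to the case $q \ne p$ and then handle $q = p$ (the torus fiber) separately. For $q \neq p$ (and also for the archimedean place, i.e. the generic fiber), $\calA$ has good reduction, so $\calA$ is an abelian scheme over $\Z_{(q)}$, its own N\'eron model; then $F$ extends to an endomorphism of this abelian scheme, and $\mathrm{Ker}(F)$ is a closed subgroup scheme, flat over the base after passing to the identity component / quotienting appropriately. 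The dimension of the image is the dimension of $\calA$ minus the dimension of $\mathrm{Ker}(F)^0$, and I would argue this kernel dimension is constant: the connected component of the kernel of $F$ in the abelian scheme is again an abelian scheme (being a closed subgroup scheme of an abelian scheme, smooth over the base since we are in characteristic prime to its order of components, with proper fibers), so its relative dimension is locally constant on the connected base $\Spec \Z[1/N]$, hence equal on the generic fiber and on the fiber at $q$. This gives $\dim \mathrm{Im}(F|_{\calA_\C}) = \dim \mathrm{Im}(F|_{\calA_{\ol\F_q}})$ for all $q \nmid Np$.

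For the remaining and genuinely delicate case $q = p$, where $\calA_{\F_p} = T$ is a torus, I would compare $\calA_\C$ with $\calA_{\ol\F_p}$ using the N\'eron model $\mathcal{N}$ of $\calA_\Q$ over $\Z_{(p)}$: $\calA$ over $\Z_{(p)}$ is (by construction, as a connected component of a kernel inside $\Pic^0(M_{H_p})$ which has semi-abelian reduction) the identity component of this N\'eron model, a semi-abelian scheme with abelian generic fiber $\calA_\Q$ and toric special fiber $T$. The endomorphism $F$ of $\calA$ extends by the N\'eron mapping property to an endomorphism of $\mathcal{N}$, preserving the identity component. The key invariant to control is $\dim \mathrm{Im}(F)$ on each fiber. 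I would use that for a semi-abelian scheme, the dimension of the image of an endomorphism is upper semicontinuous on the base, so $\dim \mathrm{Im}(F|_{\calA_{\ol\F_p}}) \le \dim\mathrm{Im}(F|_{\calA_\C})$; for the reverse inequality I would pass to the character lattice / Tate module. Concretely, the generic-fiber image and special-fiber image can be computed from the rank of $F$ acting on the $\ell'$-adic Tate module $T_{\ell'}\calA_\Q$ (for an auxiliary prime $\ell' \ne p$): by smooth-proper base change and the theory of N\'eron models of semiabelian type, $T_{\ell'}\calA_\Q$ and $T_{\ell'}$ of the special fiber (which includes the Tate module of the toric part $T$, i.e. the cocharacter lattice, together with contributions matching the generic fiber via the monodromy filtration) have the same rank, and $F$ acts compatibly, so the rank of $F$ on these lattices agrees. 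Then $\dim\mathrm{Im}(F)$ on each fiber equals $\tfrac12$ the rank of $F$ on the corresponding Tate module (for the abelian fiber) resp.\ the rank on the character lattice (for the torus), and the Eichler--Shimura / monodromy compatibility forces these to agree.

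I expect the $q = p$ step to be the main obstacle: the fiber jumps type (abelian to toric), so one cannot simply invoke "rank of an endomorphism of an abelian scheme is locally constant." The cleanest route is probably to invoke the orthogonality / weight-monodromy structure of the N\'eron model $\mathcal{N}$: the component group and the filtration $0 \to T \to \calA_{\F_p} \to 0$ (here purely toric, no abelian quotient) fit into the exact sequence \eqref{eq:T_1}, and $\mathrm{Hom}$ into an abelian variety kills tori, so $F$ on the special fiber is determined by $F$ on the character lattice $T^\vee$, which is a sublattice of $\Z^V$ stable under $F$ (this is exactly the content of Theorem \ref{thm:comparison} and Proposition \ref{prop:correspondence_T}, identifying $F$ with an integral matrix). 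The same integral matrix, acting on $H^1_{\text{\'et}}(\calA_{\ol\F_p}, \Q_{\ell'})$ or on $H^1$ of the generic fiber, has the same rank because $H^1$ of the semiabelian reduction and of the generic fiber have the same dimension (the Picard scheme $\Pic^0(M_{H_p})$ has everywhere semiabelian reduction by Deligne--Rapoport, so $\calA$ does too, and there is no abelian-to-toric drop in cohomological dimension). Hence the ranks, and so the image dimensions, coincide.

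\textbf{Summary of steps.} (1) Reduce to the two cases $q \ne p$ and $q = p$. (2) For $q \ne p$: use that $\calA$ has good reduction there, so $\mathrm{Ker}(F)^0$ is an abelian subscheme of constant relative dimension over the connected base, giving constancy of $\dim\mathrm{Im}(F)$. (3) For $q = p$: realize $\calA$ over $\Z_{(p)}$ as the identity component of the N\'eron model of $\calA_\Q$, with toric special fiber $T$; extend $F$ by the N\'eron property; identify $F$ with an integral matrix on the character lattice $T^\vee \subset \Z^V$ (via Theorem \ref{thm:comparison}); and show the rank of this matrix on $T^\vee$ equals its rank on $H^1$ of the generic fiber, using that $\Pic^0(M_{H_p})$ has semiabelian reduction so the relevant cohomology dimensions do not jump. (4) Conclude $\dim\mathrm{Im}(F|_{\calA_\C}) = \dim\mathrm{Im}(F|_{\calA_{\ol\F_q}})$ in all cases.
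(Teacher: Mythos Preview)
Your overall framework (N\'eron models, semi-abelian reduction) is correct, and for $q\neq p$ your sketch is fine, though the case split is unnecessary. The paper treats all $q$ uniformly: after blowing up the non-regular supersingular points of $M_{H_p}$ so that the model is regular (which leaves $\Pic^0$ unchanged), one invokes \cite[Theorem 4(b), Section 9.5]{BLR} to identify $\calA$ with the identity component of the N\'eron model of $\calA_\Q$ over $\Z_{(q)}$, checks semi-abelian reduction, and then applies the single black box \cite[Proposition 3, Section 7.5]{BLR}: for abelian varieties with semi-abelian reduction, passage to N\'eron models is exact up to isogeny. Applying this to $0 \to (\ker F)^0 \to \calA_\Q \to \mathrm{Im}(F) \to 0$ gives the dimension equality on every fiber at once. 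You should also note that you never actually justify that $\calA$ \emph{is} the identity component of the N\'eron model; this needs the regularity of the integral model, which is why the blow-up step is there.

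The genuine gap is at $q=p$. You correctly observe that $\dim\mathrm{Im}(F|_{\calA_\C}) = \tfrac12\,\mathrm{rank}\big(F\mid T_{\ell'}\calA_\Q\big)$ while $\dim\mathrm{Im}(F|_T) = \mathrm{rank}\big(F\mid X^*(T)\big)$, so what you must prove is $\mathrm{rank}(F\mid V_{\ell'}) = 2\,\mathrm{rank}(F\mid V_{\ell'}^I)$, where $V_{\ell'}=T_{\ell'}\calA_\Q\otimes\Q_{\ell'}$ and $V_{\ell'}^I\cong X_*(T)\otimes\Q_{\ell'}$. Your justification (``cohomology dimensions do not jump'', ``Eichler--Shimura/monodromy compatibility forces these to agree'') is not an argument: the two lattices have different ranks ($2g$ versus $g$), so nothing about constancy of cohomology dimensions helps. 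What is actually needed is that in the purely toric case the monodromy operator induces an $F$-equivariant isomorphism $V_{\ell'}/V_{\ell'}^I \cong V_{\ell'}^I(-1)$, so that $F$ has the same rank on the two graded pieces; this is precisely the content encapsulated in the BLR exactness result the paper invokes. Finally, your appeal to Theorem \ref{thm:comparison} and Proposition \ref{prop:correspondence_T} is misplaced: those describe the specific Hecke correspondence, whereas the lemma must hold for an arbitrary endomorphism $F$ of $\calA$ (it is later applied to $F=q(T_\ell)$ for all polynomials $q$).
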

\begin{proof}
By \cite[Proposition 6.7 and Theorem 6.9, pages 143-145]{DeRa}, both $M_H/\Z[\tfrac 1N]$ and $M_{H_p}/\Z[\tfrac 1N]$ have reduced fibers, and geometrically irreducible generic fiber. Again by loc$.$ cit$.$, $M_H$ is regular, but $M_{H_p}$ might not be: it is smooth away from supersingular elliptic curves $(E,\phi, C)$ in characteristic $p$, and locally around such points it is isomorphic to $\Z_p[[w,z]]/(wz-p^k)$, where $k$ is either $\# Aut(E,\phi,C)$, or half of it if $-1$ is an automorphism. To reduce to the regular case we can blow-up the non-regular points. In this way, we introduce a chain of $\P^1$'s on the fiber over $p$; this chain does not alter the $\Pic^0$, hence we can assume by abuse of notation that also $M_{H_p}$ is regular.

We now localize at $q$ and apply \cite[Theorem 4 (b), Section 9.5, page 267]{BLR}: both $ \Pic^0\left(M_{H_p}\right) $ and $ \Pic^0\left(M_{H}\right)$ are the connected component of the identity of the N\'eron models of $ \Pic^0\left(M_{H_p}\right)_{\Q} $ and $ \Pic^0\left(M_{H}\right)_{\Q}$, hence $\calA$ is the connected component of the identity of the N\'eron model of $\calA_{\Q}$ (this last assertion can checked using the universal property of N\'eron models). Moreover, by Lemma \ref{lem:fibra} and  \cite[Proposition 6.7, page 143]{DeRa}, $\calA$ has semi-abelian reduction.

When there is semi-abelian reduction, by \cite[Proposition 3, section 7.5, page 186]{BLR}, taking N\'eron models is exact up to isogeny, so we have the claim.

\end{proof}

Now, we start looking at the singular cohomology of the complex variety $\calA(\C)$, we will denote it by $H^{1\text{sing}}(\calA(\C),\Z)$. This cohomology group will be used to compare the adjacency matrix of the graph with the action of the Hecke operator on the \'{e}tale cohomology $H^{1,\text{\'et}}(\calA_{\overline{\F}_\ell}, \Q_{\ell'})$, where we have the bound on the eigenvalues from Theorem \ref{thm:bound}.

\begin{Lemma}\label{lem:fibra} 
Fix $p,\ell,H$ as in Definition \ref{def:graph}. Let $\calA$ be the abelian variety in Definition \ref{def:A} and let $T^{\vee}$ be the group of characters of the torus $T$ introduced in Equation \eqref{eq:T_1}.

There is a (non-canonical) isomorphism of $T_{\ell}$ modules
$$
( T^{\vee} \otimes \Q )^{\oplus 2} \cong  H^{1,\text{sing}}(\calA(\C), \Z)\otimes \Q
$$
which is equivariant for the automorphisms from Definitions \ref{def:aut_modular}, \ref{def:fricke} and \ref{def:AL}.
\end{Lemma}
\begin{proof}
First we show that there exists a non-canonical isomorphism $\gamma$ of $T_{\ell}$-modules. For this it is enough showing a $\Q$-linear isomorphism between   $T^{\vee} \otimes \Q$ and $H^{1,\text{sing}}(\calA(\C), \Z)\otimes \Q$, as $\Q[x]$-modules, with $x$ acting as $T_{\ell}$. Since $\Q[x]$ is a PID, it is enough showing that for every polynomial $q$ in $\Z[x]$, the rank of $F:=q(T_{\ell})$ is equal on both spaces. 
The morphism $F$ is an endomorphism of $\calA$. The rank of $F$ restricted to $T^{\vee}\otimes \Q$ is equal to $\dim(\mathrm{Im}\left( F|_{\calA_{\overline{\F}_\ell}}\right))$. The rank of $F$ on $H^{1,\text{sing}}(\calA(\C), \Q)=H^{1,\text{sing}}(\calA(\C), \Z)\otimes \Q$ is equal to twice $\dim(\mathrm{Im} \left(F|_{\calA_{\C}}\right))$. We obtain the claim by Lemma \ref{lem:Neron_exact}.


We now show that $\gamma$ can be choosen equivariant for all automorphisms. Let $G$ be the group formed by these automorphisms. Theorem \ref{thm:comparison} and Proposition \ref{prop:diag} imply that $T_{\ell}$ is semi-simple: we can decompose both $V = (T^{\vee} \otimes \Q )^{\oplus 2}$ and $W = H^{1,\text{sing}}(\calA(\C), \Z)\otimes \Q$ in ``eigenspaces'', i.e. subspaces $V_\mu, W_\mu$ of the form $\ker(\mu(T_\ell))$ for  $\mu\in \Q[x]$ an irreducible polynomial. Notice that $\Q[T_\ell]$ acts as the field $K = \Q[x]/\mu$ on $V_\mu$ and  $W_\mu$. Since  $G$ commutes with $T_{\ell}$, then it preserves such eigenspaces and we are left to prove that $V_\mu \cong W_\mu$ as $K[G]$-modules. To this end, since $G$ is finite, it is enough to show that the characters of these two representations are the same. Since each $g\in G$ has finite order, the actions of $g$ on $V_\mu, W_\mu$ are separable, and the traces over $K$ are equal if for each $p\in K[x]$ the ranks of $p(g)$ are equal: this is a consequence of the fact that $V_\mu$ and $W_\mu$ can be described as the images of a certain polynomial in $T_\ell$ and of Lemma \ref{lem:Neron_exact} applied to endomorphisms induced by polynomials in $g$ and $T_\ell$. 
\end{proof}

The following lemma is a rather general fact.
\begin{Lemma}\label{lem:et_vs_sing}
Fix $p,\ell,H$ as in Definition \ref{def:graph}. Let $\calA$ be the abelian variety in Definition \ref{def:A} and denote $H^{*,\text{sing}}$ the singular cohomology.

For any prime $\ell'$ which does not divide $p\ell N$, we have an isomorphism of $T_{\ell}$  modules
$$
H^{1,\text{\'et}}(\calA_{\overline{\F}_\ell}, \Q_{\ell'})\cong H^{1,\text{sing}}(\calA(\C), \Z) \otimes_\Z \Q_{\ell'}\,,
$$
which is equivariant for the automorphisms from Definitions \ref{def:fricke}, \ref{def:aut_modular} and \ref{def:AL}.

\end{Lemma}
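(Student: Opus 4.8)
The plan is to deduce this comparison between $\ell$-adic and singular cohomology of $\calA$ from the smooth and proper base change theorems in \'etale cohomology, applied to a suitable model of $\calA$ over a small ring. First I would choose a finitely generated $\Z[1/N\ell\ell']$-subalgebra $R\subset\C$ over which $\calA$, together with the Hecke operator $T_\ell$ and all the automorphisms of Definitions \ref{def:fricke}, \ref{def:aut_modular} and \ref{def:AL}, is defined: concretely, $\calA$ arises as a subabelian scheme of $\Pic^0$ of the modular curves $M_{H_p}$, $M_H$, which are already defined over $\Z[1/N]$, so in fact one may simply spread out $\calA_\Q$ (with its endomorphisms) to an abelian scheme $\calA_R$ over $R=\Z[1/N\ell\ell']$ itself, using that $\Pic^0$ of the modular curves over $\Z[1/N]$ has semi-abelian reduction everywhere and is an abelian scheme away from finitely many primes. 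After inverting those extra bad primes we obtain an honest abelian scheme $\calA_R\to\Spec R$ carrying $T_\ell$ and the listed automorphisms as $R$-endomorphisms.

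Next I would invoke the smooth proper base change theorem: for an abelian scheme $\calA_R\to\Spec R$ and a prime $\ell'$ invertible in $R$, the \'etale cohomology $H^{1,\text{\'et}}(\calA_{\bar s},\Q_{\ell'})$ is (non-canonically, but compatibly with the action of endomorphisms of $\calA_R$) independent of the geometric point $s$ of $\Spec R$; more precisely $R^1\pi_*\Q_{\ell'}$ is a lisse sheaf on $\Spec R$, so its stalks at any two geometric points are isomorphic as modules over $\End_R(\calA_R)$, and this endomorphism algebra contains $T_\ell$ and the automorphisms in question. Applying this to the geometric point over $\F_\ell$ and to a geometric point over the generic fiber $\bar\Q$ gives an isomorphism of $T_\ell$-modules (equivariant for all the automorphisms)
$$
H^{1,\text{\'et}}(\calA_{\ol{\F_\ell}},\Q_{\ell'})\;\cong\; H^{1,\text{\'et}}(\calA_{\bar\Q},\Q_{\ell'})\;\cong\;H^{1,\text{\'et}}(\calA_\C,\Q_{\ell'}),
$$
where the last step uses that \'etale cohomology with $\Q_{\ell'}$-coefficients is insensitive to the extension of algebraically closed base fields $\bar\Q\subset\C$. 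Finally the classical comparison isomorphism between \'etale and singular cohomology of the complex abelian variety $\calA_\C$ identifies $H^{1,\text{\'et}}(\calA_\C,\Q_{\ell'})$ with $H^{1,\text{sing}}(\calA_\C,\Z)\otimes_\Z\Q_{\ell'}$, again compatibly with all algebraic endomorphisms (in particular $T_\ell$ and the automorphisms), since the comparison is functorial in morphisms of varieties over $\C$. Chaining these isomorphisms yields the statement.

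The only genuine point requiring care — and the step I expect to be the main obstacle — is the equivariance of the spreading-out: one must make sure that $T_\ell$ and the Fricke/Atkin-Lehner/matricial automorphisms, which are defined on $\calA_\Q$ (or on the reductions $\calA_{\F_p}=T$, $\calA_{\F_\ell}$), really do extend to endomorphisms of the abelian scheme $\calA_R$, so that base change applies to them simultaneously. This follows because all these operators are induced by correspondences on the modular curves $M_{H_p}$, $M_H$ that themselves are defined over $\Z[1/N]$ (the maps $\pr_p$, $\quot_p$, the diamond operators, and the Atkin-Lehner maps all come from Definition \ref{def:Hecke} and Definitions \ref{def:aut_modular}, \ref{def:fricke}, \ref{def:AL}, which are formulated integrally), so they preserve the integral $\Pic^0$ and hence the abelian scheme $\calA_R$; by N\'eron-model functoriality (as in the proof of Lemma \ref{lem:Neron_exact}) they extend uniquely. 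Once this is in place, everything else is the standard machinery of smooth-proper base change and the Artin comparison theorem, applied over the connected base $\Spec R$, and no further calculation is needed.
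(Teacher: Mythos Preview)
Your proposal is correct and follows essentially the same route as the paper: smooth--proper base change to identify the \'etale cohomology of $\calA_{\ol{\F_\ell}}$ and $\calA_\C$, followed by the Artin comparison theorem over $\C$, with equivariance coming from functoriality. The paper is slightly more direct because $\calA$ is already defined over $\Z[1/N]$ (Definition~\ref{def:A}) and is an abelian scheme after localizing at $\ell$, so no spreading-out argument is needed; your extra care about extending $T_\ell$ and the automorphisms integrally is therefore unnecessary here, but not wrong.
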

\begin{proof}
The isomorphism is given by the cospecialization map, let us explain the argument. By proper-smooth base change theorem (see \cite[Theorem 20.4]{MilnEt}), the cospecialization map 
\begin{equation} \label{eq:etale_proper_smooth}
	H^{1,\text{\'et}}(\calA_{\overline{\F}_\ell}, \Q_{\ell'}) \lto H^{1,\text{\'et}}(\calA_\C, \Q_{\ell'}) \,,
\end{equation}
is an isomorphism. Since the cospecialization map is functorial, then it is an isomorphism of $T_{\ell}$ modules. 

Moreover, since $\calA_\C$ is a smooth variety over $\C$, then the comparison theorem \cite[Theorem 21.1]{MilnEt} tells us that, for each positive integer $k$, we have isomorphisms
\[
H^{1,\text{\'et}}(\calA_\C, \Z/(\ell')^k\Z) \cong H^{1,\text{sing}}(\calA(\C), \Z/(\ell')^k\Z)
\]
Since the above isomorphism is functorial, then, again, it also an isomorphism of $T_{\ell}$ modules. The proof of the second statement is analogous. 
\end{proof}

The following result sums up Theorem \ref{thm:comparison}, Lemma \ref{lem:fibra} and Lemma \ref{lem:et_vs_sing}
\begin{Theorem}[Relation between isogeny graphs and modular curves]\label{thm:riassunto_confronti}
Fix $p,\ell,H$ as in Definition \ref{def:graph}. Let $\calA$ be the abelian variety in Definition \ref{def:A}, and $\ker(w_{i,*})$ as in Remark \ref{rem:genratori_nuclei}.

For any prime $\ell'$ which does not divide $p\ell N$, there is a non-canonical isomorphism
$$
H^{1,\text{\'et}}(\calA_{\overline{\F}_\ell}, \Q_{\ell'})\cong \left(\ker(w_{i,*})\otimes \Q_{\ell'}\right)^{\oplus 2}
$$
which conjugates the action of the Hecke operator $T_{\ell}$ to the action of the adjacency matrix of the isogeny graph. 

Moreover, one can find an isomorphism that conjugates the Galois map \ref{def:Galois_inv} to the Fricke map \ref{def:fricke}, and the automorphisms from Section \ref{s:aut_G} to the automorphisms from \ref{def:aut_modular} and \ref{def:AL} with the same name.
\end{Theorem}

(As funny byproduct, this result shows the well-known fact that the minimal polynomial of the Hecke operator is defined over $\Z$.)

We are now ready to prove Theorem \ref{thm:spet1} about isogeny graphs used in Section \ref{S:Weil_pairing}.
\begin{Theorem}[see Theorem \ref{thm:spet1}]\label{thm:spet2}
The absolute values of the eigenvalues of $A_i$ restricted to $\ker(w_{i,*})\otimes \C$ are strictly smaller than $2\sqrt{\ell}$.

\end{Theorem}
\begin{proof}

Because of Theorem \ref{thm:riassunto_confronti}, the eigenvalues of $A_i$ are the eigenvalue of the Hecke operator $T_\ell$ acting on cohomology a sub-abelian variety $\calA$ of the Jacobian of a modular curve. Then, the combination of Eichler-Shimura relation and Weil conjectures stated in Theorems \ref{Eichler-Shimura}, \ref{thm:bound} implies that the absolute values of the eigenvalues are less or equal than $2\sqrt{\ell}$.

\medskip

In the spirit of the proof of \cite[Theorem 2.1]{CE}, we prove arguing by contradiction that the absolute values of the eigenvalues cannot be equal to $2\sqrt{\ell}$.  
 Suppose that $T_\ell \acts  H^{1,\text{\'et}}(\calA_{\overline{\F}_\ell}, \Q_{\ell'})$  has a complex eigenvalue of absolute value $2\sqrt\ell$. By Proposition \ref{prop:diag}, such an eigenvalue, has an angle in $\tfrac{\pi}{k'}\Z$ for $k'$ an integer, implying that $T_\ell^{2k'}-(4\ell)^{k'}$ is not surjective on $ H^{1,\text{\'et}}(\calA_{\overline{\F}_\ell}, \Q_{\ell'})$ nor, by lemma \ref{lem:et_vs_sing}, on $H^{1,\text{sing}}(\calA(\C), \Q)$. We deduce that $\ker(T_\ell^{2k'}-(4\ell)^{k'})$ in $\calA$ is not finite, hence its connected component of identity $B$ is a sub-abelian variety of $\calA$ defined over $\Q$ of positive dimension. 

The abelian variety $B$ has good reduction modulo~$\ell$, since it is a quotient of the Jacobian of a modular curve which has good reduction modulo~$\ell$ (indeed having good reduction is stable under isogeny and quotients). 

On the cohomology of $B_{\F_\ell}$ we have the Eichler-Shimura relation \ref{Eichler-Shimura} $T_\ell = \Frob+\langle\ell\rangle\Ver$. Because of this, if an eigenvalue of $T_{\ell}$ has absolute value $2\sqrt\ell$, then $\Frob$ and $\langle\ell\rangle\Ver$ must have exactly the same eigenvalues relative to the same eigenvectors; we conclude that $T_\ell=2\Frob$ on $B_{\F_\ell}$. This implies that the action $T_\ell\acts  H^0(B_{\F_\ell}, \Omega^1)$ is zero. Furthermore, in the case $\ell=2$, it implies that the action of $T_\ell$ on $H^0(B,\Omega^1)$ is a multiple of $2$, and  also the action $\tfrac{T_\ell}{2}\acts  H^0(B_{\F_\ell}, \Omega^1)$ is zero. By the good reduction, we can consider the free $\Z_\ell$-module $M:=H^0(B_{\Z_\ell}, \Omega^1)$, and $M\otimes_{Z_\ell} \F_\ell = H^0(B_{\F_\ell}, \Omega^1)$. Knowing the action of $T_\ell$ and $\tfrac{T_\ell}{2}$  on $M\otimes \F_\ell$, we deduce that the action $T_\ell\acts M$ must be a multiple of $2\ell$, hence its determinant must be a multiple of $(2\ell)^{\rank M} = 2^{\dim B}\ell^{\dim B}$. This is absurd since by looking at the eigenvalues, the determinant of $T_\ell\acts M$ is a root of unity times $(2\sqrt\ell)^{\dim B} = 2^{\dim B}\ell^{\dim B/2}$. Hence there are no eigenvalues of $T_\ell$ of absolute value $2\sqrt\ell$.


\end{proof}

\section{Relation with modular forms}\label{s:mocular_forms}
In this section we identify our spaces $\ker(w_{i,*})$ from Remark \ref{rem:genratori_nuclei} with spaces of modular forms. We start from the following lemma.
\begin{Lemma}\label{lem:hodge}
Fix $p,\ell,H$ as in Definition \ref{def:graph}. Let $\calA$ be the 
abelian variety in Definition \ref{def:A} and let $T^{\vee}$ be the group of characters of the torus $T$ introduced in Equation \eqref{eq:T_1}. 

We have a (non-canonical) isomorphism of $T_{\ell}$ modules
$$
T^{\vee}\otimes \C \cong H^0(\calA_{\C},\Omega^1)
$$
which is equivariant for the automorphisms $u$ from Definitions \ref{def:aut_modular}, \ref{def:fricke} and \ref{def:AL}, acting by pullback on $\calA$, hence as $u^{*,\vee}$ on $T^\vee$ and as $(u^*)^*$ (see Remark \ref{rmk:pullpullpush}) on the differentials of $\calA_\C$.

\end{Lemma}
\begin{proof}
It is enough giving an isomorphism $T^\vee \otimes \C \cong H^0(\calA_\C,\Omega^1)$, which is analogous to Lemma \ref{lem:fibra}.
\end{proof}

\begin{Remark}\label{rmk:pullpullpush}
For a map of curves $u\colon X\to Y$, we have the pullback $u^*\colon \Pic^0(Y) \to \Pic^0(X)$ and its pullback 
$$(u^*)^*\colon H^0(\Pic^0(X), \Omega^1) = H^0(X, \Omega^1) \lto H^0(\Pic^0(Y), \Omega^1)  = H^0(Y, \Omega^1)\,.$$
Then, the above map is equal to  the pushforward of differentials $u_*\colon H^0(X, \Omega^1) \to H^0(Y, \Omega^1)$.  In particular, in Lemma \ref{lem:hodge}, an automorphism $u$ acts as the restriction of $u_*$ on $H^0(\calA_\C, \Omega^1)$
\end{Remark}

The above Lemma, together with Theorem \ref{thm:comparison}, suggests the study differentials on $\calA$: in Theorems \ref{thm:modular_general} and \ref{thm:modular_special} we relate these differentials with modular forms.

To include non-connected modular curves in our analysis, in Subsections \ref{s:6.1} - \ref{s:6.4} we recall the notation, mainly following \cite{DS}, and collect some slightly cumbersome computations.

\subsection{Complex points on modular curves}\label{s:6.1}
Analogously to \cite[IV.5.3]{DeRa}, using the definition $\HH^\pm := \C - \R$ and its ``compactification'' $\ol{\HH}^\pm := \HH^\pm \cup \P^1(\Q)$, we have a (canonical) isomorphism of Riemann surfaces
\begin{subeqn} \label{eq_complex_points_can}
\begin{gathered}
    \text{GL}_2(\Z) \backslash \big(\ol{\HH}^\pm \times(\GLmod{N}/H)\big)  \overset{\sim}{\lto}  M_H(\C),
\end{gathered}
\end{subeqn}
where for each $\tau$'s in $\HH^\pm$ (on proper elliptic curves) the map identifies 
\[
    (\tau,\gamma H)  \longmapsto  (E_{\tau},\phi_{\tau}\circ\gamma ) = (\C/(\Z {+} \Z\tau) , \phi_{\tau}\circ\gamma),
    \quad 
    \phi_\tau \left(\begin{smallmatrix}1 \\ 0\end{smallmatrix}\right)=\tfrac{\tau}{N}\,, \phi_\tau \left(\begin{smallmatrix}0 \\ 1\end{smallmatrix}\right)=\tfrac{1}{N} \,.
\]
In the above isomorphism $\text{GL}_2(\Z) $ acts by 
\begin{subeqn}\label{action_GL2}
\begin{gathered}
 g \cdot (\tau ,\gamma H) := (g(\tau), \bar{g}^{-T} \gamma H) \qquad\qquad \text{i.e.} \\
\smt abcd (\tau ,\gamma H) = \left(  \tfrac{a\tau+b}{c\tau+d}, \,\, \tfrac{1}{\det g} \smt{d}{-c}{-b}{a} \gamma H \right)\,.
\end{gathered}
\end{subeqn} 
For the subgroup $H_p < \GLmod{Np}$, Equation \eqref{eq_complex_points_can} can be rephrased as
\begin{subeqn}\label{Hp_complex_points_can} 
	\begin{aligned}
  \Gamma^0(p) \backslash \big(\ol\HH^\pm \times\tfrac{\GLmod{n}}{H}\big) & \overset{\sim}{\lto} M_{H_p}(\C) 
	\,,\qquad 	(\tau,\gamma)  \longmapsto (E_\tau, \phi_{\tau} \circ \gamma, \langle \tfrac \tau p \rangle )\,,
	\end{aligned}
\end{subeqn}
where $\Gamma^0(p)$ is the subgroup of $\text{GL}_2(\Z)$ made of matrices congruent to $\smt *0**$ modulo~$p$. 

Using the above  isomorphisms 
the maps $\pr_p$ and $\quot_p$ in \eqref{eq:maps_Tl} become
\begin{subeqn}\label{pr_quot_C}
\begin{gathered}
    \pr_p,\, \quot_p\,\colon  \Gamma^0(p)  \backslash \big(\HH^\pm \times \tfrac{\GLmod{N}}{H}\big) \lto \text{GL}_2(\Z) \backslash \big(\HH^\pm \times \tfrac{\GLmod{N}}{H} \big) \,,
  \\
\pr_p (\tau,\gamma) = (\tau,\gamma) \,, \qquad
	\quot_p	(\tau,\gamma) = ( \smt 100p\tau,\smt p001  \gamma)
\end{gathered}
\end{subeqn}

The isomorphisms \eqref{eq_complex_points_can} \eqref{Hp_complex_points_can} also help us recognize the components, over $\C$, of modular curves: choosing representatives $g_1,\ldots, g_r$ for the quotient $\GLmod{N}/(H \cdot \text{SL}_2(\Z/N\Z))$, we get the following (non-canonical) decomposition into connected components
\begin{subeqn}
\label{components_XH}
\begin{gathered}
     M_H(\C) \cong \bigsqcup_{j=1}^r  \Gamma_{g_jHg_i^{-1}} \backslash \ol{\HH}\,,\qquad  \left( E_\tau, \phi_{\tau}\circ g_j \right) \longmapsfrom (\tau, g_j)\,,
     \\[-1ex]
     M_{H_p}(\C) \cong \bigsqcup_{j=1}^r  \big(\Gamma^0(p){\cap} \Gamma_{g_jHg_i^{-1}}\big) \backslash \ol{\HH}\,, \qquad  
  \left( E_\tau, \phi_{\tau}\circ g_j, \langle \tfrac \tau p\rangle \right) \longmapsfrom (\tau, g_j)\,,
\end{gathered}  
\end{subeqn}
where $\ol\HH= \HH \cup \P^1(\Q)$ is the ``compactification'' of $\HH = \{ \tau \in \C: \mathrm{Im}(\tau)>0 \}$, and where
\begin{equation*}\label{eq:Gamma_H}
	\Gamma_{H}:= \{ \gamma \in \text{SL}_2(\Z): \gamma^T\!\!\!\pmod n \text{ lies in }H\}.
\end{equation*}

\begin{subRemark}\label{rem_00}
In Equation \eqref{Hp_complex_points_can} we use $\Gamma^0(p) = \Gamma_{B^0(p)}$, with $B^0(p)$ the Borel group $\{\smt**0*\}$ (notice the transposition in \eqref{action_GL2}). Since conjugation of the $H_p$ gives an isomorphic modular curve, we can also use $B_0(p) = \{\smt*0**\} = \smt 0110 B^0(p) \smt0110^{-1}$, yielding a variant of \eqref{Hp_complex_points_can}: 
\begin{subeqn}\label{Hp_complex_points_can_2} 
	\begin{aligned}
  \Gamma_0(p) \backslash \big(\ol\HH^\pm \times\tfrac{\GLmod{n}}{H}\big) & \overset{\sim}{\lto} M_{H_p}(\C) \,, 
  \qquad 
		(E_\tau, \phi_{\tau} \circ \gamma, \langle \tfrac 1p \rangle ) \longmapsfrom (\tau,\gamma) \,,
	\end{aligned}
\end{subeqn}
for $\Gamma_0(p) = \Gamma_{B_0(p)} = \{\smt abcd \in \mathrm{GL}_2(\Z) : c \equiv 0 \mod p \}$.
\end{subRemark}

\subsection{Modular forms and differentials}
For any congruence subgroup $\Gamma$ of $\SLZ$, the map $f \mapsto f \mathrm{d}\tau$ gives an isomorphism between the space $S_2(\Gamma)$ of cuspidal modular forms of weight 2 and the space $H^0(\Gamma \backslash \ol\HH, \Omega^1)$ of holomorphic differentials on $\Gamma \backslash \ol\HH$, see \cite[Section 3.3 and Excercise 3.3.6]{DS} or \cite[Theorem 2.3.2]{Miy}. 
This, together with \eqref{components_XH} implies the isomorphisms
\begin{subeqn}\label{Omega_XH}
 H^0(M_{H,\C} , \Omega^1) \cong \bigoplus_{j=1}^r S_2\left(\Gamma_{g_jHg_i^{-1}}\right) \,, 
 \qquad H^0(M_{H_p,\C} , \Omega^1) \cong \bigoplus_{j=1}^r S_2\left(\Gamma_{g_jHg_i^{-1}} \cap \Gamma^0(p)\right) \ .
\end{subeqn}

\subsection{Full level case}
When $H = \{ \Id \}$, we write $M_N$ for $M_H$ and 
$\Gamma(N)$ for $\Gamma_H$, which contains matrices in $\SLZ$ congruent to $\smt 1001$ modulo $N$. Choosing $\{ g_i\} = \{ \smt a001: a \in (\Z/N\Z)^\times \}$, Equation \eqref{components_XH} gives  
\begin{subeqn}\label{iso_M_np}
 M_N(\C) \cong \bigsqcup_{a\in (\Z/N\Z)^\times}\!\!\!\! \Gamma(N) \backslash \ol{\HH}\,,\quad  M_{\{\Id\}\times B_0(p)}(\C) \cong  \bigsqcup_{a\in (\Z/N\Z)^\times}\!\!\!\! \big(\Gamma^0(p){\cap}\Gamma(N)\big) \backslash \ol\HH \,,
\end{subeqn}
and, compatibly with this isomorphisms, the maps $\pr$, $\quot$ are 
\begin{subeqn}\label{pr_quot_C_Mn}
\begin{gathered}
    \pr_p,\, \quot_p\,\colon  \!\!\!\! \bigsqcup_{a\in (\Z/N\Z)^\times}\!\!\!\! \big(\Gamma^0(p){\cap}\Gamma(N)\big) \backslash \ol\HH  \lto \!\!\!\! \bigsqcup_{a\in (\Z/N\Z)^\times}\!\!\!\! \Gamma(N) \backslash \ol\HH  \,,  
  \\
  \pr_p (\tau,a) = (\tau,a)\,, \qquad  \quot_p	(\tau,a) = ( \smt 100q\tau, pa )
\end{gathered}
\end{subeqn}

Moreover, Equation \eqref{Omega_XH} becomes 
\begin{subeqn}\label{Omega_Xn}
\begin{aligned}
& H^0(M_{N,\C} , \Omega^1) \cong \bigoplus_{a\in (\Z/N\Z)^\times}\!\!\!\! S_2\left(\Gamma(N)\right) = S_2\left(\Gamma(N)\right) \otimes_\C \C^{(\Z/N\Z)^\times}  \,, 
\\
& H^0(M_{\{\Id\}\times B^0(p), \C} , \Omega^1) \cong \bigoplus_{a\in (\Z/N\Z)^\times}\!\!\!\! S_2\left(\Gamma(N) {\cap} \Gamma^0(p)\right)  = S_2\left(\Gamma(N){\cap} \Gamma^0(p)\right) \otimes_\C \C^{(\Z/N\Z)^\times} 
\end{aligned}
\end{subeqn}

\subsection{Hecke operators}\label{s:6.4}
As in \cite[Section 5.1]{DS}, we recall the definition of double coset operators: given $\Gamma_1, \Gamma_2 < \SLZ$ congruence subgroups, and given $\alpha\in \GL_2^{\det >0}(\Q)$, we have the operator 
\begin{subeqn}\label{double_coset}
	[\Gamma_1\alpha\Gamma_2]_2 \colon M_2(\Gamma_1) \to M_2(\Gamma_2)\,, \quad  f [\Gamma_1\alpha\Gamma_2]_2 
 = \sum_j f[\alpha \gamma_j]_2 
 \,,    
\end{subeqn}
where $
f[\smt abcd]_2 (\tau) = \det\smt abcd 
\tfrac{1}{(c\tau+d)^2} f\left( \tfrac{a\tau+b}{c\tau+d} \right)$, and $\{\gamma_j\}$ is a set of representatives for $\Gamma_3\backslash \Gamma_2$, with $\Gamma_3 = \alpha^{-1}\Gamma_1 \alpha \cap \Gamma_2$. We can interpret the operator \eqref{double_coset} as follows: we have maps
\begin{subeqn}\label{double_coset_geo}
	\begin{tikzcd}
		\Gamma_3\backslash\ol\HH \arrow[rrr, "\alpha\colon \tau \mapsto \alpha\tau"] \arrow[d,"\pi_2\colon \tau \mapsto \tau"] &&&     \alpha\Gamma_3\alpha^{-1}\backslash\ol\HH \arrow[d,"\pi_1\colon \tau \mapsto \tau"]
		\\ 
		\Gamma_2\backslash\ol\HH &&&  \Gamma_1\backslash\ol\HH
	\end{tikzcd}    
\end{subeqn}
and, under the isomorphism \eqref{Omega_XH}, we have
$[\Gamma_1\alpha\Gamma_2]_2 = \pi_{2,*} \circ (\pi_{1}\alpha)^*$.
A particular case are the classical Hecke operators in the theory of modular forms, see \cite[Section 5.2]{DS}: 
	\begin{subeqn}\label{tT_pullpush}
		\tT_\ell :=[\Gamma\smt 100\ell \Gamma]_2  = \pi_* \circ {\smt 100\ell}^*\,, \qquad {\smt 100\ell}\,, \pi  \colon \left( \Gamma^0(\ell) {\cap} \Gamma \right) \backslash\ol\HH \to  \Gamma\backslash\ol\HH
	\end{subeqn}
where 
${\smt 100\ell}\tau = \tfrac\tau \ell$, $\pi\tau = \tau$, and 
we consider $\Gamma = \Gamma_H$ for $H<\GLmod{N}$ any subgroup that is normalized by diagonal matrices.

In the case $\Gamma = \Gamma(N)$, we want to compare $\tT_\ell$ with the Hecke operator $T_\ell$ in Definition~\ref{def:Hecke}. Indeed $T_\ell$ acts as $\quot_{\ell,*}\circ \pr_\ell^*$  on $\Pic^0(M_N)$, hence it acts by pull back as $\pr_{\ell,*}\circ \quot_\ell^*$ on $H^0(\Pic^0(M_{N,\C}),\Omega^1) = H^0(M_{N,\C},\Omega^1)$. 
By \eqref{Omega_Xn}, this space of differentials is isomorphic to $S_2\left(\Gamma(N)\right) \otimes_\C \C^{(\Z/N\Z)^\times}$ and, under this identification, Equation
\eqref{pr_quot_C_Mn} tells that that $\pr_{\ell,*} = \pi_* \otimes \Id$ and that $\quot_\ell^* = {\smt 100\ell}^* \otimes \sigma_\ell$, where  $\sigma_{\ell}\colon \C^{(\Z/N\Z)^\times}\to \C^{(\Z/N\Z)^\times}$ is the ``shift by $\ell$'' namely $(z_a)\mapsto (z_{a\ell})$, and the maps $\pi, \smt 100\ell$ are the same appearing in \eqref{tT_pullpush}. We deduce that 
\begin{subeqn}\label{Hecke_Mn_C}
    T_\ell = \tT_\ell\otimes_{\C}\,\sigma_\ell \quad \text{ in } 
    H^0(\Pic^0(M_{N,\C}),\Omega^1) = S_2(\Gamma(N))      \otimes_{\C} \C^{(\Z/N\Z)^\times}\,.
\end{subeqn}
We have an analogous equality for $H = \{\Id \} \times B_0(p)$: using the second line in \eqref{Omega_Xn} 
\begin{subeqn}\label{Hecke_Mnp_C}
    T_\ell = \tT_\ell\otimes_{\C}\,\sigma_\ell \quad \text{ in }
    H^0(\Pic^0(M_{\{\Id \} \times B_0(p),\C}),\Omega^1) = S_2( \Gamma^0(p) {\cap}\Gamma(N))      \otimes_{\C} \C^{(\Z/N\Z)^\times}\,.
\end{subeqn}

\subsection{Graphs versus modular forms} \label{s:forms_vs_graphs}
In this section we study $H^0(\calA_\C, \Omega^1)$. Definition \ref{def:A} gives the canonical isomorphism 
$$ 
H^0(\calA_\C, \Omega^1)  = \frac{H^0(M_{H_p,\C}, \Omega^1)}{\pr_p^* \,H^0(M_{H,\C}, \Omega^1) + \quot_p^*\, H^0(M_{H_p,\C}, \Omega^1)}\,.
$$       
We start by looking at the case $H=\{\Id\}$, where Equation \eqref{pr_quot_C_Mn} gives an explicit description of $\pr_p^*, \quot_p^*$. Instead of taking a quotient, we can take the orthogonal complement with respect to the Petersson inner product (see \cite[Section 5.5]{DS}): following \cite{Rib}, we define the space of $p$-new forms as
$$ S_2^\pnew (\Gamma^0(p) {\cap} \Gamma(N)) := \Big( S_2(\Gamma(N)) + S_2(\Gamma(N)[\smt 100p]_2)  \Big)^\perp \quad \subset  S_2(\Gamma^0(p) {\cap} \Gamma(N))\,,$$
which, by the same arguments in \cite[Proposition 5.5.2 and Proposition 5.6.2]{DS}, is $\tT_\ell$-stable. In particular, using the description (\eqref{Hecke_Mnp_C} of the Hecke operator, we get the isomorphism 
\[
T_\ell \lefttorightarrow H^0(\calA_{\{\Id\},p,\C}, \Omega^1) \cong S_2^\pnew (\Gamma^0(p) {\cap} \Gamma(N)) \otimes_\C \C^{(\Z/N\Z)^\times}\righttoleftarrow \tT_\ell \otimes \sigma_\ell \,.
\]
To treat the case of a general $H$, we recall that $\GLmod N$ acts on $M_{\{\Id\}\times B_0(p)}$ by the law  $(E,\phi, C)^g = (E,\phi\circ g, C)$. Using \eqref{action_GL2} and \eqref{iso_M_np}, we can characterise this action as follows:
\begin{equation*}\label{g_tilde}
\begin{aligned}
 (\tau, \smt a001)^{g} = (\tau, \smt{ad}001)  \qquad & \text{ if } g = \smt d001\,,
\\
(\tau, \smt a001)^g = (\tilde{g_a}\tau, \smt a001) \qquad &\text{ if } \det g=1  \,,
  \end{aligned}
\end{equation*}
where  $\tilde{g_a}$  is any matrix in $\Gamma^0(p)$ that is congruent to  $\left( \smt a001 g \smt a001^{-1}  \right)^t$ modulo $N$. 
We get an action of $\GLmod N$ by pullback on $H^0(\calA_{\{\Id\},p,\C}, \Omega^1) \subset H^0(M_{\{\Id\}\times B_0(p)})$ as follows:  
\begin{subeqn}\label{action_GL2_2}
\begin{gathered}
\GLmod{N} \lefttorightarrow  
S_2^\pnew (\Gamma^0(p) {\cap} \Gamma(N)) \otimes_\C \C^{(\Z/N\Z)^\times} = \bigoplus_{a\in (\Z/N\Z)^\times}\!\!\!\! S_2^\pnew (\Gamma^0(p) {\cap} \Gamma(N))  \ ,
\\ \smt d001 \cdot (f_a)_a  = (f_{ad})_a \,, \quad g \cdot  (f_a)_a = (f_a[\tilde{g_a}]_2)_{a} \text{ if } \det g = 1\,,
\end{gathered}
\end{subeqn}
where he operation $[\cdot]_2$ is as in \eqref{double_coset}, and $\tilde{g_a}$ is chosen as above.
Since pullback of differentials along the natural projection $M_{\{\Id\}\times B_0(p)} \to M_{H_p} $ identifies $H^0(\calA_{H,p}, \Omega^1)$ with the subspace of $H^0(\calA_{\{\Id\},p}, \Omega^1)$ made of $H$-invariant differentials, we get the isomorphism 
\[
T_\ell \lefttorightarrow H^0(\calA_{H,p,\C}, \Omega^1) \cong \left( S_2^\pnew (\Gamma^0(p) {\cap} \Gamma(N)) \otimes_\C \C^{(\Z/N\Z)^\times} \right)^H \righttoleftarrow \tT_\ell \otimes \sigma_\ell \,.
\]
This, together with Lemma \ref{lem:hodge}, Theorem \ref{thm:comparison} and the fact that $A$ is conjugate to $A^*$ (Proposition \ref{prop:diag}) 
implies the following result.

\begin{subTheorem}\label{thm:modular_general}
Let  $G = G(p,\ell,H)$ be the graph in Definition \ref{def:graph}, 
let $\ker(w_{*}), \ker(w_{i,*})$ 
be the subspaces of $\C^{V(G)}$  described in \ref{rem:genratori_nuclei} and let $S$ be the $p$-new part of  $S_2(\Gamma^0(p) {\cap} \Gamma(N))$. 
Then 
\[
\begin{tikzcd}
 \bigoplus_{i=1}^n \ker(w_{i,*}) 
 \arrow[loop left,looseness=6,"A"]  
 \arrow[rr,"\sim"] 
 &&    \left( S \otimes_\C \C^{(\Z/N\Z)^\times} \right)^H \arrow[ll]
 \arrow[loop right,looseness=5,"\tT_\ell\otimes \sigma_\ell"] 
\end{tikzcd} \,.
\]
In  words $\ker(w_*) = \oplus_i \ker(w_{i,*})$, as a module over the adjacency matrix of the graph, is isomorphic to the subspace of  $ S \otimes_\C \C^{(\Z/N\Z)^\times}$ fixed by $H$, as a module over $\tT_\ell\otimes \sigma_\ell$ (for the action of $H<\GLmod N$ see \eqref{action_GL2_2}). 
\end{subTheorem}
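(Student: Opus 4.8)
The plan is to assemble the statement by chaining together identifications that are, for the most part, already prepared in the preceding subsections. The chain has three links: at one end the space $\Ker(w_*) = \bigoplus_i \Ker(w_{i,*}) \subset \C^{V(G)}$ with the adjacency operator $A = \bigoplus_i A_i$ acting on it; in the middle the Hodge cohomology $H^0(\calA_{H,p,\C},\Omega^1)$ of the abelian variety $\calA_{H,p}$ of Definition \ref{def:A} with its Hecke operator $T_\ell$; and at the other end the $H$-invariants of $S \otimes_\C \C^{(\Z/N\Z)^\times}$ with $\tT_\ell \otimes \sigma_\ell$ acting, where $S = S_2^\pnew(\Gamma^0(p) \cap \Gamma(N))$.

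First I would nail down the first link. By Theorem \ref{thm:comparison} together with the identifications \eqref{eq:F_i} and \eqref{eq:T_dual}, the character group $T^\vee \otimes \C$ of the toric part $T$ of $\Pic^0(\calM_{H_p,\ol\F_p})$ is canonically $\bigoplus_i \Ker(w_{i,*})$, and under this identification $T_\ell$ acts as the adjoint $A^* = \bigoplus_i A_i^*$ of the adjacency operator for the Hermitian form \eqref{eq:Hermitian}. Lemma \ref{lem:hodge} then supplies a (non-canonical) isomorphism of $T_\ell$-modules $T^\vee \otimes \C \cong H^0(\calA_{H,p,\C},\Omega^1)$, so composing we obtain an isomorphism $\bigoplus_i \Ker(w_{i,*}) \cong H^0(\calA_{H,p,\C},\Omega^1)$ carrying $A^*$ to $T_\ell$. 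Finally, Proposition \ref{prop:diag} says $A$ and $A^*$ are conjugate, so this is equally an isomorphism of modules over the adjacency operator $A$ itself, which is what the statement asks on the left.

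The real content is the third link: identifying $\bigl(H^0(\calA_{H,p,\C},\Omega^1), T_\ell\bigr)$ with $\bigl((S \otimes_\C \C^{(\Z/N\Z)^\times})^H, \tT_\ell \otimes \sigma_\ell\bigr)$. I would argue this in two steps. For $H = \{\Id\}$: the canonical presentation of $H^0(\calA_{\{\Id\},p,\C},\Omega^1)$ as the quotient of $H^0(M_{\{\Id\}\times B_0(p),\C},\Omega^1)$ by $\pr_p^*\,H^0(M_{N,\C},\Omega^1) + \quot_p^*\,H^0(M_{N,\C},\Omega^1)$, transported along the isomorphisms \eqref{Omega_Xn} and using the explicit formulas \eqref{pr_quot_C_Mn} for $\pr_p,\quot_p$, becomes the quotient of $S_2(\Gamma^0(p)\cap\Gamma(N)) \otimes_\C \C^{(\Z/N\Z)^\times}$ by $\bigl(S_2(\Gamma(N)) + S_2(\Gamma(N)[\smt 100p]_2)\bigr) \otimes_\C \C^{(\Z/N\Z)^\times}$; replacing this quotient by its Petersson-orthogonal complement realizes it as $S \otimes_\C \C^{(\Z/N\Z)^\times}$, which is $\tT_\ell$-stable by the arguments of \cite[Propositions 5.5.2 and 5.6.2]{DS}, and on which, by \eqref{Hecke_Mnp_C}, $T_\ell$ acts as $\tT_\ell \otimes \sigma_\ell$. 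For general $H$: the natural projection $M_{\{\Id\}\times B_0(p)} \to M_{H_p}$ identifies, by pullback of differentials, $H^0(\calA_{H,p,\C},\Omega^1)$ with the $H$-invariant subspace of $H^0(\calA_{\{\Id\},p,\C},\Omega^1)$; spelling out the $\GLmod N$-action via \eqref{action_GL2} and \eqref{action_GL2_2} gives exactly the module $(S \otimes_\C \C^{(\Z/N\Z)^\times})^H$, and since this action commutes with $T_\ell$ the Hecke operator still acts as $\tT_\ell \otimes \sigma_\ell$.

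The main obstacle is bookkeeping rather than any new idea. One has to keep consistent throughout: the various non-canonical decompositions $M_H(\C) \cong \bigsqcup_j \Gamma_{g_jHg_j^{-1}} \backslash \ol\HH$ and the induced formulas for $\pr_p,\quot_p$; the precise descriptions of the subspaces $\pr_p^*\,H^0(M_{N,\C},\Omega^1)$ and $\quot_p^*\,H^0(M_{N,\C},\Omega^1)$ needed to match the quotient with the definition of $p$-new forms; and the compatibility of the diagonal-versus-$\mathrm{SL}_2$ description of the $\GLmod N$-action in \eqref{action_GL2_2} with the tensor decomposition $H^0(\calA_{\{\Id\},p,\C},\Omega^1) \cong S \otimes_\C \C^{(\Z/N\Z)^\times}$, so that taking $H$-invariants of differentials really corresponds to taking $H$-invariants of modular forms. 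The transpositions in \eqref{action_GL2} (the $\bar g^{-T}$) and the Borel versus opposite-Borel normalization of Remark \ref{rem_00} are exactly the places where a sign or transpose error would creep in, so those deserve the most care; none of it is deep.
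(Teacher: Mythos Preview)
Your proposal is correct and follows essentially the same route as the paper: the paper assembles the theorem from precisely the ingredients you name — Theorem \ref{thm:comparison} and \eqref{eq:F_i} to identify $\Ker(w_*)$ with $T^\vee\otimes\C$ carrying $A^*\leftrightarrow T_\ell$, Lemma \ref{lem:hodge} to pass to $H^0(\calA_{H,p,\C},\Omega^1)$, Proposition \ref{prop:diag} to swap $A^*$ for $A$, and then the quotient/Petersson-complement description of $H^0(\calA_\C,\Omega^1)$ in terms of $p$-new forms, first for $H=\{\Id\}$ via \eqref{Omega_Xn}, \eqref{pr_quot_C_Mn}, \eqref{Hecke_Mnp_C}, and then taking $H$-invariants via \eqref{action_GL2_2}. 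Your caveats about bookkeeping (transpositions, Borel conventions) are apt but, as you say, not deep.
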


\begin{subRemark}\label{bobobob}
In Remark \ref{rem_00} we pointed out that $M_{H_p}$ can be described using either $\Gamma^0(p)$ or $\Gamma_0(p)$. Following the same lines, Theorem \ref{thm:modular_general} remains  true after substituting $S_2^\pnew (\Gamma^0(p) {\cap} \Gamma(N))$  with  
\[
S_2^\pnew (\Gamma_0(p) {\cap} \Gamma(N)) := \Big( S_2(\Gamma(N)) + S_2(\Gamma(N))[\smt p001]_2  \Big)^\perp \quad \subset  S_2(\Gamma_0(p) {\cap} \Gamma(N))\,.
\]
and after slightly modifying the action of $\GLmod{N}$ in \eqref{action_GL2_2}, i.e. asking that $\tilde{g_a}\in \Gamma_0(p)$.
\end{subRemark}

\medskip
We also rephrase Theorem \ref{thm:modular_general}, for certain choices of $H$, using  modular forms for 
$$\Gamma_1(k) = \{m \in \SLZ : m \equiv \smt 1*01 \bmod k\} \,, \quad \Gamma_0(k) = \{m \in \SLZ : m \equiv \smt **0* \bmod k \} \,.$$
Such modular forms received more attention in the literature, e.g. in the asymptotic estimates in \cite{Serre} which we later use. We use the decomposition, (see \cite[Section 4.3, page 119]{DS}),
\begin{subeqn}\label{chi}
S_2(\Gamma_1(k))  = \bigoplus_{\chi \in (\Z/k\Z)^{\times,\vee} } S_2(\Gamma_1(k),\chi) \,,
\end{subeqn}
where $\chi$ varies across all characters modulo $k$. In particular, it follows from the definitions that $S_2(\Gamma_0(p) \cap \Gamma_1(N))$ is a subspace of $S_2(\Gamma_1(Np))$ and precisely the subspace fixed by all the diamond operators (in the sense of \cite[Section 5.2]{DS}) $\langle d\rangle$ for $d\equiv 1 \bmod N$. This implies that 
\[
S_2(\Gamma_0(p) \cap S_2(\Gamma_1(N)) =  \bigoplus_{\chi \in (\Z/N\Z)^{\times,\vee} } S_2(\Gamma_1(pN),\chi) \,.
\]
where we notice that we are not summing over all characters $\chi$ modulo $Np$, as in \eqref{chi}, instead we only look at the characters $\chi\colon (\Z/Np\Z)^\times \to \C^\times$ that factor through the projection $(\Z/Np\Z)^\times \to (\Z/N\Z)^\times$. Moreover, if $f$ is a modular form in $S_2(\Gamma_1(N),\chi)$ for some character $\chi$ modulo~$N$, then both $f$ and $f[\smt p001]_1$ belong to $S_2(\Gamma_1(Np),\chi)$  by \cite[Proposition 5.6.2]{DS}. Using this fact we define the spaces of $p$-new forms
\[
\begin{gathered}
    S_2^{p-\new}(\Gamma_0(p)\cap \Gamma_1(N)) := \Big( S_2(\Gamma_1(N)) + S_2(\Gamma_1(N))[\smt p001]_2  \Big)^\perp \quad \subset  S_2(\Gamma_0(p) {\cap} \Gamma_1(N)) \,,
    \\ S_2^{p-\new}(\Gamma_1(pN), \chi) := \Big( S_2(\Gamma_1(N), \chi) + S_2(\Gamma_1(N), \chi)[\smt p001]_2  \Big)^\perp \quad \subset  S_2(\Gamma_1(pN), \chi) \,,
\end{gathered}
\]
where $\chi$ is modulo~$N$ and  the orthogonal is taken with respect to the Petersson inner product.

\begin{subTheorem}
\label{thm:modular_special}
Let  $G(p,\ell,H)$ be the graph in Definition \ref{def:graph}, with vertices $V$ and adjacency matrix $A$, and let $\ker(w_{1,*}),\ldots, \ker(w_{n,*})$ be the subspaces of $\C^V$  described in Remark \ref{rem:genratori_nuclei}. 

Then,
 
\begin{itemize}
\item if $H = \{\Id\} < \GLmod N$, each $\ker(w_{i,*})$, as an $A$-module, is isomorphic to $S'\otimes_\C \C^L$, as a module over  $\tT_\ell\otimes \sigma_\ell$, where $L = \langle \ell \rangle\subset \modstar N$,  $\sigma_\ell\colon \C^L \to \C^L$ sends $(a_x)_{x\in L}$ to $(a_{x\ell})_{x\in L}$, 
and $S'$ is the following space of modular forms
\[
S' = \bigoplus_{\chi \in (\Z/N\Z)^{\times,\vee} } S_2^{p\text{-new}}(\Gamma_1(pN^2),\chi)  \,,
\]
with $\chi$ varying across the characters that factor through the projection $(\Z/pN^2\Z)^\times \to (\Z/N\Z)^\times$. 

\item if $H=B_0(N)=\{\smt*0**\}$ then $n=1$ and $\ker(w_{1,*}) = \{ (x_v)_v \in \C^V : \sum_v x_v = 0\}$, as a module over $A$ is isomorphic to $S_2^{p-\new} (\Gamma_0(pN))$ as a module over $\tT_\ell$. 

\item if $H=B_1(N)=\{\smt*0*1\}$ then $n=1$ and $\ker(w_{1,*}) = \{ (x_v)_v \in \C^V : \sum_v x_v = 0\}$, as a module over $A$ is isomorphic to $S'$ as a module over $\tT_\ell$, with 
$$
S' = S_2^{p-\new}(\Gamma_0(p)\cap \Gamma_1(N)) = \bigoplus_{\chi \in (\Z/N\Z)^{\times,\vee}}  S_2^{p-\new}(\Gamma_1(pN), \chi) .
$$

\item if $H$ is a non-split Cartan of level $N$, then  $n{=}1$ and $\ker(w_{1,*})$ as an $A$-module, is isomorphic to 
$$\bigoplus_{d|N}S_2^{\new}(\Gamma_0(pd^2)) \,,$$
as a $\tT_\ell$-module (see \cite[Section 5.6]{DS} for the definition of $S_2^{\new}$).

\end{itemize}   
\end{subTheorem}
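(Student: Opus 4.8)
\emph{Strategy.} The plan is to deduce Theorem~\ref{thm:modular_special} by specialising Theorem~\ref{thm:modular_general} to each of the four groups $H$. Recall that Theorem~\ref{thm:modular_general} (together with Lemma~\ref{lem:hodge}, Theorem~\ref{thm:comparison} and Proposition~\ref{prop:diag}) identifies $\bigoplus_i\Ker(w_{i,*})$, as a module over the adjacency matrix $A$, with $H^0(\calA_{H,p,\C},\Omega^1)\cong\big(S_2^\pnew(\Gamma^0(p)\cap\Gamma(N))\otimes_\C\C^{\modstar N}\big)^H$ equipped with the operator $\tT_\ell\otimes\sigma_\ell$, where $H$ acts through \eqref{action_GL2_2}; moreover this isomorphism respects connected components, in the sense that the summand $\Ker(w_{i,*})$ corresponds to the part of $H^0(\calA_{H,p,\C},\Omega^1)$ supported on the components of $M_{H,\C}$ whose Weil invariant lies in the orbit $C_i\subseteq R_H=\mu_N^\times/\det H$ of $\xi\mapsto\xi^\ell$. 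So for each $H$ there are two things to do: (i) describe $R_H$ and the $\ell$-orbits on it, and read off each individual $\Ker(w_{i,*})$ from \eqref{iso_M_np}--\eqref{Omega_Xn} (for full level) or its analogue \eqref{components_XH}--\eqref{Omega_XH}; (ii) compute the space of $H$-invariants explicitly, rewriting $S_2^\pnew(\Gamma^0(p)\cap\Gamma(N))$ in terms of the classical congruence subgroups $\Gamma_0(\cdot),\Gamma_1(\cdot)$ by conjugating by diagonal matrices in $\GL_2^+(\Q)$ (using Remark~\ref{bobobob} to pass from $\Gamma^0(p)$ to $\Gamma_0(p)$), while checking that $\tT_\ell$ is carried to the classical Hecke operator $\tT_\ell$.

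\emph{Full level.} Here $\det H=1$, so $R_H=\mu_N^\times$ is a torsor under $\modstar N$ and the orbits of $\xi\mapsto\xi^\ell$ are the cosets of $L=\langle\ell\rangle$; under the identification of $\mu_N^\times$ with $\modstar N$ implicit in \eqref{iso_M_np}, the orbit $C_i$ is a coset $a_iL$. Since $H$ is trivial the invariants are the whole of $S_2^\pnew(\Gamma^0(p)\cap\Gamma(N))\otimes_\C\C^{\modstar N}$, and $\sigma_\ell$ acts on $\C^{\modstar N}=\bigoplus_{aL\in\modstar N/L}\C^{aL}$ by cyclically permuting each $\C^{aL}$; matching the summand $\C^{a_iL}$ with the component-block $\Ker(w_{i,*})$ gives $\Ker(w_{i,*})\cong S_2^\pnew(\Gamma^0(p)\cap\Gamma(N))\otimes_\C\C^L$ as a $\tT_\ell\otimes\sigma_\ell$-module. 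It then remains to show $S_2^\pnew(\Gamma^0(p)\cap\Gamma(N))\cong S'=\bigoplus_\chi S_2^\pnew(\Gamma_1(pN^2),\chi)$: replacing $\Gamma^0(p)$ by $\Gamma_0(p)$ as above and conjugating $\Gamma(N)$ by $\diag(N,1)$ turns $\Gamma(N)$ into $\Gamma_1(N)\cap\Gamma_0(N^2)$, hence $\Gamma_0(p)\cap\Gamma(N)$ into $\Gamma_1(N)\cap\Gamma_0(pN^2)$; the nebentypus decomposition \eqref{chi} then writes the weight-$2$ cusp forms of the latter group as $\bigoplus_\chi S_2(\Gamma_1(pN^2),\chi)$ with $\chi$ running over the Dirichlet characters mod $pN^2$ that factor through $\modstar N$, and the $p$-new subspaces correspond. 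This is the first bullet.

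\emph{Borel and $B_1$.} For $H=B_0(N)$ and $H=B_1(N)$ one has $\det H=\modstar N$, so $R_H$ is a single point, $n=1$ and $k=1$; hence $\Ker(w_{1,*})=\Ker(w_*)=\{(x_v)_v\in\C^V:\sum_v x_v=0\}$ by Proposition~\ref{prop:spet}. Now $M_{B_0(N)}$ is the geometrically connected modular curve of level $\Gamma_0(N)$ and $M_{B_1(N)}$ the one of level $\Gamma_1(N)$ (equivalently $\Gamma_{B_0(N)}$, $\Gamma_{B_1(N)}$ are $\GL_2^+(\Q)$-conjugate to $\Gamma_0(N)$, $\Gamma_1(N)$), so by Definition~\ref{def:A} and \eqref{Omega_XH} the space $H^0(\calA_{H,p,\C},\Omega^1)$ is directly $S_2^\pnew(\Gamma_0(pN))$ in the Borel case (using $\Gamma^0(p)\cap\Gamma_0(N)$ conjugate to $\Gamma_0(pN)$), and $S_2^\pnew(\Gamma_0(p)\cap\Gamma_1(N))=\bigoplus_{\chi\bmod N}S_2^\pnew(\Gamma_1(pN),\chi)$ in the $B_1$ case by \eqref{chi}, with $T_\ell$ acting as the classical $\tT_\ell$.

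\emph{Non-split Cartan and the main obstacle.} This is the only case that needs input beyond the formalism above. Again $\det H=\modstar N$, so $n=1$, $k=1$, $\Ker(w_{1,*})=\{\sum_v x_v=0\}$; and since $H$ acts on $\C^{\modstar N}$ through the surjective determinant $H\to\modstar N$, taking $H$-invariants of $S_2^\pnew(\Gamma^0(p)\cap\Gamma(N))\otimes_\C\C^{\modstar N}$ yields $H^0(\calA_{H,p,\C},\Omega^1)$, which is the space of $p$-new weight-$2$ cusp forms on the non-split Cartan modular curve of level $N$ with $\Gamma_0(p)$-structure. To identify this $\tT_\ell$-module with $\bigoplus_{d\mid N}S_2^{\new}(\Gamma_0(pd^2))$ I would invoke the classical isogeny decomposition of the Jacobian of the non-split Cartan modular curve in terms of the Jacobians $J_0(d^2)$ for $d\mid N$ (due to Chen and Edixhoven in the prime-level case, together with its extension to composite level), and then use the new/old theory to match the $p$-new and $d^2$-new pieces Hecke-equivariantly. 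I expect this last identification to be the main obstacle: the first three bullets are essentially bookkeeping on top of Theorem~\ref{thm:modular_general} and the change-of-group manipulations of Section~\ref{s:mocular_forms}, whereas the non-split Cartan case requires importing a nontrivial decomposition of a modular Jacobian and carefully reconciling the Hecke actions and the new/old filtrations across that isogeny.
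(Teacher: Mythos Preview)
Your proposal is correct and follows essentially the same path as the paper's proof. Both reduce via Lemma~\ref{lem:hodge} and Theorem~\ref{thm:comparison} to describing $H^0(\calA_{H,p,\C},\Omega^1)$ as a $T_\ell$-module, handle the Borel and $B_1(N)$ cases directly from the fact that the relevant modular curves are geometrically connected with $T_\ell=\tT_\ell$, and for the non-split Cartan import the Hecke-equivariant isogeny decomposition of the Jacobian (the paper cites \cite[Lemma~3.1 and Theorem~3.8]{manypoints}, which packages the Chen--Edixhoven result you allude to together with its composite-level extension).

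The only noticeable difference is in the full-level case: where you conjugate $\Gamma_0(p)\cap\Gamma(N)$ by $\diag(N,1)$ to obtain $\Gamma_1(N)\cap\Gamma_0(pN^2)$ and then apply the nebentypus decomposition, the paper instead writes down an explicit moduli isomorphism $\calM_{B'(N^2)}\to\calM_N$, $(E,(P,Q))\mapsto(E/\langle NQ\rangle,(NP,Q))$, with $B'(N^2)=\{\smt{1+N*}{0}{*}{1+N*}\}$, and then passes to $B_2(N^2)=\{\smt10*1\}$-invariants. These are the same manoeuvre viewed from two angles: your conjugation is the complex-analytic shadow of the paper's degree-$N$ isogeny, and $\Gamma_{B'(N^2)}=\Gamma_1(N)\cap\Gamma_0(N^2)$. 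The paper's moduli formulation makes the Hecke-equivariance automatic (it is a map of stacks over $\Z[1/N]$), whereas in your version one should check that conjugation by $\diag(N,1)$ intertwines the double-coset operators $\tT_\ell$; this is immediate since $\diag(N,1)$ normalises $\smt100\ell$ up to scalar.
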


\begin{proof}
By Lemma \ref{lem:hodge} it is enough to describe the $T_\ell$-module $H^0(\calA_{H,p},\Omega^1)$.

The cases $H = B_0(N) = \{\smt*0**\}$ and $B_1(N) = \{\smt*0*1\}$ can be treated with the same arguments used for the full level structure in Theorem \ref{thm:modular_general}, even slightly easier: $M_{B_0(N)_p}(\C)$ and $M_{B_1(N)_p}(\C)$ are connected and isomorphic to $\Gamma_0(pN)\backslash\ol\HH$ and $(\Gamma_0(p){\cap}\Gamma_1(N))\backslash\ol\HH$, and, since $\smt \ell001$ belongs to $H$, the graph is connected and then $T_\ell$ acts exactly as $\tT_\ell$. 

The full level structure case is a consequence of the Hecke-equivariant isomorphisms
\begin{equation*}\label{eq_Gamma(n)_Gamma_1(n2)}
\begin{aligned}
\calM_{B'(N^2)} &\lto \calM_N \,,\qquad \qquad \qquad(E,(P,Q)) \longmapsto (E/\langle nQ\rangle,( nP, Q)) 
\\ \calM_{B'(N^2)_p} &\lto \calM_{\{\Id\}\times B_0(p)} \,,\qquad \qquad \qquad(E,(P,Q), G) \longmapsto (E/\langle nQ\rangle, (nP, Q), G)  \,,
\end{aligned}
\end{equation*}
where $B'(N^2)$ is the subgroup $\{\smt{1+N*}{0}{*}{1+N*}\}$ of $\GLmod{N^2}$ and where we  identify isomorphisms $\phi\colon (\Z/k\Z)^2\to E[k]$ with basis $(P,Q)$ of the group $E[k]$. 

We reduced to $B'(N^2)$ structures. The inclusion $B'(N^2) \supset B_2(N^2) :=\{\smt 10*1\}$ induces a map $M_{B_2(N^2)} \to M_{B'(N^2)}$ that identifies  $H^0(M_{B'(N^2)}, \Omega^1)$ with the $B'(N^2)/M_{B_2(N^2)}$-invariant subspace of $H^0(M_{B'(N^2)}, \Omega^1)$.  Choosing $\{ g_i\} = \{ \smt a001: a \in (\Z/N^2\Z)^\times \}$, Equation \eqref{components_XH} gives   
\[
\begin{aligned}
M_{B_2(N^2)}(\C) &\cong  \bigcup_{a\in (\Z/N^2\Z)^\times} \Gamma_1(N^2) \backslash \ol\HH \,, \quad &\big(\C/\Z{+}\Z\tau, (\tfrac{a\tau}N, \tfrac 1N)\big)  \leftrightarrow (\tau, a) \,,
	\\
M_{B_2(N^2)_p}(\C) &\cong  \bigcup_{a\in (\Z/N^2\Z)^\times} \big(\Gamma_1(N^2){\cap}\Gamma_0(p)\big) \backslash \ol\HH \,, \quad &\big(\C/\Z{+}\Z\tau, (\tfrac {a\tau} N, \tfrac1N), \langle \tfrac 1p\rangle \big)  \leftrightarrow (\tau, a)\,.
\end{aligned}
\]

The action of $B'(N^2)/B_2(N^2)$ identifies certain components (two points $(\tau, a)$, $(\tau, a')$ are identified iff $a\equiv a' \pmod N$) and that within the same components identifies a point $(\tau,a)$ with the point $(\dia{d}\tau,a)$ for $d\equiv 1 \pmod N$ and $\dia d$ the diamond operator in \cite[Section 5.2]{DS}. We deduce the following isomorphism of Hecke-modules
\[
\begin{aligned}
H^0(\calA_{\{\Id\}, p} ,\Omega^1) &\cong   H^0(\calA_{B_2(N^2), p} , \Omega^1)^{B'(N^2)/B_2(N^2)} = \bigoplus_{a\in (\Z/N\Z)^\times} \bigoplus_{\chi \in (\Z/N\Z)^{\times, \vee}} S_2^{p-\new}(\Gamma_1(pN^2), \chi) 
\\ & = \left(  \bigoplus_{\chi \in (\Z/N\Z)^{\times, \vee}} S_2^{p-\new}(\Gamma_1(pN^2), \chi)\right) \otimes \C^{(\Z/N\Z)^\times} \,,
\end{aligned}
\]
on which, by the same arguments used in Theorem \ref{thm:modular_general},
the Hecke operator acts as $\tT_\ell\otimes \sigma_\ell$. 

For $H$ a non-split Cartan our result follows from the  $T_\ell$-equivariant isogenies \cite[Lemma 3.1 and Theorem 3.8]{manypoints}
\[
\Pic^0(M_H) \sim \prod_{d|N} J_0^\new(d^2) \,,\quad \Pic^0(M_{H_p}) \sim \prod_{d|N} \left( J_0^\new(d^2)^2 \times J_0^\new(pd^2) \right)\,,
\]
where $J_0^\new(k)$ denotes the new part of the Jacobian of $M_{B_0(k)}$.
\end{proof}

\subsection{Automorphisms of the graphs versus automorphisms of spaces modular forms}\label{s:forms_vs_aut}

We now study how the automorphisms in Definitions  \ref{def:fricke}, \ref{def:aut_modular} and \ref{def:AL} act on a point of $M_{\{\Id\}\times B_0(p)}$  (or a quotient $M_{H_p}$) under the isomorphism \eqref{iso_M_np}. Recall that a point $(a, \tau)$ corresponds to the elliptic curve $E_\tau = \C/\Z{+}\Z\tau$ together with the subgroup $\langle \tfrac\tau p\rangle$ and the basis $(\tfrac{a\tau}{N}, \tfrac1N)$ of $E[N]$ (such a basis corresponds to the isomorphism $\phi_\tau \colon (\Z/N\Z)^2 \to E[N]$ sending the standard basis to it). 

The Fricke automorphism $\sigma$ sends the  point $(a,\tau)$ to the elliptic curve $\C/\Z{+}\Z\tfrac\tau p$, with the subgroup $\langle\tfrac1p\rangle$ and with the basis $(\tfrac{a\tau} N, \tfrac1N)$ of the $N$-torsion. The multiplication by $\tau' = -\tfrac{p}{\tau}$ inside $\C$ induces an isomorphism between this elliptic curve and the elliptic curve $E_{\tau'}$, with the subgroup $\langle \tfrac{\tau'}p \rangle$ and the basis $(-\tfrac{ap}N, \tfrac{\tau'}N)$, namely the point of $(\tau', \smt{0}{1}{-ap}{0})$ under the canonical isomorphism \eqref{Hp_complex_points_can}. If we now apply the action \eqref{action_GL2} of a matrix 
\begin{equation*}\label{tildem} 
\tilde m \in \Gamma^0(p)  \quad \text{ such that } \tilde m \equiv \smt 0{-1}10 \bmod N \,,
\end{equation*}
we see that this point is equivalent to the point  $(\tilde m(\tau'), \smt{ap}{0}{0}{1})$, that is the point $(\tilde m \smt0{-p}10 \tau, ap)$. We deduce that 
\[
\sigma^* = [\tilde m \smt0{-p}10]_2 \otimes \sigma_p  \quad \text{in } H^0(M_{\{\Id\}\times B^0(p), \C} , \Omega^1) \cong S_2\left(\Gamma(N){\cap} \Gamma^0(p)\right) \otimes_\C \C^{(\Z/N\Z)^\times} 
\]
where $\sigma_p\lefttorightarrow \C^{(\Z/N\Z)^\times} $ is the shift $(x_a)\mapsto (x_{ap})$. 
Inspired by the above discussion we give the following
\begin{subDefinition}\label{def:Fricke_forms}
The Fricke automorphism on full level modular forms is  
$$ w_p\colon S_2\left(\Gamma(N){\cap} \Gamma^0(p)\right) \lto S_2\left(\Gamma(N){\cap} \Gamma^0(p)\right) \,, \quad f \longmapsto f[m_\sigma]_2 $$
for $m_\sigma=\tilde m \smt0{-p}10$ and $\tilde m \in \Gamma^0(p)$ a matrix congruent to $\smt 0{-1}10$ modulo~$N$.
\end{subDefinition}

For matricial automorphisms as in Definition \ref{def:aut_modular} we have already computed their action in Equation \eqref{action_GL2_2}. In particular, diamond operators $\langle d \rangle$ act as $\dia d \otimes \sigma_{d^2}$ for $\dia d$ as in the next definition (which coincides with the diamond operator in \cite[Section 5.2]{DS})
\begin{subDefinition}
    Given $H<\GLmod{N}$, for each $d\in (\Z/N\Z)^\times$, we have a diamond operator 
    $$ \dia d \colon S_2(\Gamma_H) \lto S_2(\Gamma_H) \,, \quad f \longmapsto f[\tilde m_d]_2 \,, $$
    for $\tilde m_d \in \SLZ$ a matrix congruent to  $\smt{d^{-1}}00d$ modulo~$N$.
\end{subDefinition}

Let us now suppose that $N = Mq$ for $M,q$ coprime, $q$ a prime power, and that $H = \tilde H \times B_0(q)$ as in \eqref{eq_K}. Under the canonical isomorphism \eqref{Hp_complex_points_can}, a point $(\tau, \smt abcd) \in M_{H_p}(\C)$  corresponds to the elliptic curve $E_\tau =  \C/\Z{+}\Z\tau$ together with the subgroups $\langle \tfrac \tau p\rangle \subset E_\tau[p]$ and $\langle \tfrac{b\tau+d}{q} \rangle \subset E_\tau[q]$ and the basis $(\tfrac{a\tau+c}M, \tfrac{b\tau+d}M)$ of $E_\tau[M]$. The image of a point $(\tau, a)$ under the $q$-th Atkin-Lehner $w_q$ is the elliptic curve $\C/\Z \tfrac1q {+} \Z\tau$ together with the subgroups $\langle \tfrac\tau p\rangle $ and $\langle \tfrac\tau q\rangle$ and the basis $(\tfrac{a\tau}{M}, \tfrac1M)$ of the $M$-torsion, which, for $\tau' = p\tau$ is isomorphic (under the map $z \to qz)$ to the  the elliptic curve $\C/\Z {+} \Z\tau'$ together with the subgroups $\langle \tfrac{\tau'} p\rangle $ and $\langle \tfrac{\tau'} q\rangle$ and the basis $(\tfrac{a\tau'}{M}, \tfrac qM)$ of the $M$-torsion. This last datum corresponds to a point $(q\tau, m)$ for $m\in \GLmod{qM}$ that is congruent to $\smt a00q$ modulo~$M$ and congruent to $\smt ***0$ modulo~$q$. If we apply the action \eqref{action_GL2} by a matrix
\begin{subeqn}\label{tildem_q}
\tilde m_q \in \Gamma^0(p) \quad \text{such that } \tilde m_q\equiv \smt{q}{0}{0}{q^{-1}} \bmod M\,, \tilde m_q\equiv \smt{0}{-1}{1}{0} \bmod q\,,
\end{subeqn}
the same point is moved to the point $(\tilde m_q \smt q001 \tau, \smt{a(q+M)}001)$. We deduce that 
\begin{subeqn}\label{AL_male}
w_q^* = [\tilde m_q \smt q001]_2 \otimes \sigma_{q+M} \lefttorightarrow 
\left(S_2\left(\Gamma(N){\cap} \Gamma^0(p)\right) \otimes_\C \C^{(\Z/N\Z)^\times} \right)^H    \,,
\end{subeqn}
where $\sigma_{q+M} \lefttorightarrow \C^{(\Z/N\Z)^\times} $ is the shift $(x_a)\mapsto (x_{a(q+M)})$.

This discussion, together with Proposition \ref{prop:aut}, Theorem \ref{thm:comparison}, and Lemma \ref{lem:hodge} implies the following result. Notice that by Remark \ref{rmk:pullpullpush} the automorphisms act by pushforward, or equivalently by pullback of their inverses, on the  1-forms.


\begin{subTheorem}\label{thm:modular_aut}
Let  $G = G(p,\ell,H)$ be the graph in Definition \ref{def:graph}, with $V$ the set of vertices and $\ker(w_{1,*}),\ldots,\ker(w_{n,*}) $  the subspaces of $\C^V$  described in Remark \ref{rem:genratori_nuclei}. 

Then there is an isomorphism  
$$ \bigoplus_{i=1}^n \ker(w_{i,*}) \cong  \left( S_2^\pnew (\Gamma^0(p) {\cap} \Gamma(N)) \otimes_\C \C^{(\Z/N\Z)^\times} \right)^H \,, $$
that simultaneously intertwines the action of the adjoint of the adjacency matrix $A^*$ (see also Proposition \ref{prop:diag}), the matricial automorphisms $\langle g \rangle$ in Definition \ref{def:aut}, the Galois action in Definition \ref{def:Galois_inv} and, if there, the Atkin-Lehner involutions $w_q$ in Definition \ref{def:AL} on the left, with the action of $\tT_\ell \otimes \sigma_\ell$, the action of a matrix $g^{-1}$ in \eqref{action_GL2_2}, the map $w_{p}\otimes \sigma_{1/p}$ (see Definition \ref{def:Fricke_forms}) and, if there, the inverse of the map \eqref{AL_male} on the right (we denote $\sigma_d\lefttorightarrow \C^{(\Z/N\Z)^\times}$ is the shift $(x_a)_a\mapsto (x_{ad})_a$). 
\end{subTheorem}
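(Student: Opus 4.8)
The plan is to obtain the stated isomorphism by concatenating identifications already at our disposal, and to read off the four equivariance properties from the way each operator is transported at each step. The starting point is Theorem \ref{thm:comparison}: combining it with Remark \ref{rem:genratori_nuclei} and \eqref{eq:F_i}, the space $\bigoplus_{i=1}^n \Ker(w_{i,*})$ is canonically identified with $T^\vee \otimes \C \subset \C^V$, and under this identification the adjoint $A^*$ of the adjacency matrix corresponds to the Hecke operator $T_\ell$ acting on the characters of the torus $T$ of \eqref{eq:T_1}. Proposition \ref{prop:aut} refines this: the same identification carries the Galois automorphism of the graph to the Fricke automorphism of $\calM_{H_p}$, each matricial automorphism $\langle g\rangle$ of the graph to the matricial automorphism $\langle g\rangle$ of $\calM_{H_p}$, and (when present) each graph Atkin--Lehner involution $w_q$ to the modular-curve Atkin--Lehner involution $w_q$, all acting on $T^\vee$ through pull-back on $\Pic^0$.

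Next I would apply Lemma \ref{lem:hodge} to replace $T^\vee\otimes\C$ by $H^0(\calA_\C,\Omega^1)$: this is an isomorphism of $T_\ell$-modules which is simultaneously equivariant for the matricial, Fricke and Atkin--Lehner automorphisms. Here one must record the key bookkeeping fact, from Remark \ref{rmk:pullpullpush}, that an automorphism $u$ of $\calM_{H_p}$ acts on $H^0(\calA_\C,\Omega^1)$ by the push-forward $u_*$ of $1$-forms, which equals $(u^*)^{-1}$ because $u$ is an isomorphism; this inverse is exactly what produces the $g^{-1}$, the $\sigma_{1/p}$ and the ``inverse of \eqref{AL_male}'' appearing in the statement. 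Then, starting from the canonical presentation of $H^0(\calA_\C,\Omega^1)$ in Definition \ref{def:A} and invoking the computations assembled in Section \ref{s:forms_vs_graphs} (first for $H=\{\Id\}$, then descending to a general $H$ by passing to $H$-invariants), I would identify $H^0(\calA_{H,p,\C},\Omega^1)$ with the invariant subspace $\big(S_2^\pnew(\Gamma^0(p)\cap\Gamma(N))\otimes_\C\C^{(\Z/N\Z)^\times}\big)^H$, on which $T_\ell$ acts as $\tT_\ell\otimes\sigma_\ell$ by \eqref{Hecke_Mnp_C}. This already gives the first of the four required correspondences.

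It then remains to identify, on the modular-forms side, the images of the remaining automorphisms, which is precisely the content of the explicit computations in Section \ref{s:forms_vs_aut}: the pull-back of the Fricke automorphism is $[\tilde m\,\smt0{-p}10]_2\otimes\sigma_p = w_p\otimes\sigma_p$ in the notation of Definition \ref{def:Fricke_forms}, the pull-back of a matricial automorphism $\langle g\rangle$ is the operator associated with $g$ in \eqref{action_GL2_2}, and the pull-back of an Atkin--Lehner involution $w_q$ is the operator \eqref{AL_male}. Taking inverses, as dictated by the previous paragraph (and using that $w_p$ is an involution on the relevant space of forms), turns these into $w_p\otimes\sigma_{1/p}$, the operator associated with $g^{-1}$ in \eqref{action_GL2_2}, and the inverse of \eqref{AL_male}. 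Concatenating the isomorphisms of the three paragraphs above then yields a single isomorphism intertwining, simultaneously, all four families of operators; since the statement is about $A^*$ and not $A$, no further appeal to Proposition \ref{prop:diag} is needed.

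The only genuinely delicate point --- and the one I would be most careful about --- is the consistent tracking of variances: each automorphism enters the Picard groups, hence $T^\vee$, contravariantly by pull-back, whereas it acts covariantly on holomorphic $1$-forms by push-forward, and the two actions are mutually inverse because the automorphisms are invertible. Keeping this straight, together with checking that the single isomorphism furnished by Lemma \ref{lem:hodge} (and by Proposition \ref{prop:aut}) really is equivariant for \emph{all} the operators at once --- which is already built into the statements of those results --- is essentially the whole of the work; everything else is a formal concatenation of equivariant isomorphisms that have already been constructed.
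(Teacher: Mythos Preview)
Your proof is correct and follows essentially the same approach as the paper: the paper's own argument is just the single sentence ``This discussion, together with Proposition \ref{prop:aut}, Theorem \ref{thm:comparison}, and Lemma \ref{lem:hodge} implies the following result,'' together with the remark (via Remark \ref{rmk:pullpullpush}) that the automorphisms act on $1$-forms by pushforward, i.e.\ by pull-back of their inverses. You have simply unpacked this concatenation more explicitly and carefully tracked the variances, which is exactly what is required.
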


In some special cases we can be slightly more explicit. 
\begin{subTheorem} \label{thm:modular_special_aut}
Keep the notation as in Theorem \ref{thm:modular_special} and let $A^*$ be the adjoint of the adjacency matrix, as in Proposition \ref{prop:diag}.
 
\begin{itemize}
\item if $H = \{\Id\}$, then $\oplus_i \ker(w_{i,*})$, as module over $A^*$, over the Galois action, and over the diamond operators $\langle d \rangle$, is isomorphic to $S'\otimes_\C \C^{(\Z/N\Z)^\times}$, as a module over  $\tT_\ell\otimes \sigma_\ell$, over $w_{p}\otimes \sigma_{1/p}$ and over $\dia d^{-1}\otimes \sigma_{d^{-2}}$. 

\item if $H=B_0(N)=\{\smt*0**\}$ then $n=1$ and $\ker(w_{1,*})$, as a module over $A^*$,  over the Galois action, and over the Atkin-Lehner involutions $w_q$, is isomorphic to $S_2^{p-\new} (\Gamma_0(pN))$ as a module over $\tT_\ell$, over the Fricke involution $w_p$, and over the other Atkin-Lehner involutions $w_q$ in \cite{AL}.

\item if $H=B_1(N)=\{\smt*0*1\}$ then $n=1$ and $\ker(w_{1,*})$, as a module over $A^*$, over the Galois action, and over the diamond operators $\langle d \rangle$, is isomorphic to $S'$, as a module over  $\tT_\ell$, over $w_p$ and over $\dia{d^{-1}}$. 

\item if $H $ is a non-split Cartan, then  $n=1$ and $\ker(w_{1,*})$ as a module over $A^*$,  over the Galois action, and over the nontrivial matricial automorphisms $\langle g_q \rangle$ for $q^e$ a prime power in the factorization of $N$ and $g_q$ the only elements in the normalizer of $H$ such that $ g_q \equiv \Id \pmod{N/q^e}$, is isomorphic to $\bigoplus_{d|N}S_2^{\new}(\Gamma_0(pd^2))$ as a module over $\tT_\ell$-module, over the $p$-th Atkin Lehner involution (see \cite{AL}) and over the $q$-th Atkin-Lehner involution (that acts trivially on $S_2^{\new}(\Gamma_0(pd^2))$ when $q\nmid d$)
\end{itemize}   
\end{subTheorem}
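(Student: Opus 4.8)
The plan is to deduce this statement by feeding the explicit identifications from the proof of Theorem~\ref{thm:modular_special} into the equivariant isomorphism of Theorem~\ref{thm:modular_aut}, so that the bookkeeping about the extra operators is done once and for all. Recall that Theorem~\ref{thm:modular_aut} already produces an isomorphism $\bigoplus_i\Ker(w_{i,*})\cong\big(S_2^\pnew(\Gamma^0(p){\cap}\Gamma(N))\otimes_\C\C^{(\Z/N\Z)^\times}\big)^H$ intertwining $A^*$ with $\tT_\ell\otimes\sigma_\ell$, the matricial automorphisms $\langle g\rangle$ with the action \eqref{action_GL2_2} of $g^{-1}$, the Galois automorphism with $w_p\otimes\sigma_{1/p}$, and the Atkin--Lehner involutions of the graph with the inverse of the map \eqref{AL_male}. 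So in each of the four cases it remains to rewrite the right-hand side in the form asserted, checking that the rewriting is compatible with the operators in play; by Lemma~\ref{lem:hodge}, Theorem~\ref{thm:comparison} and Proposition~\ref{prop:diag} this amounts to tracking pullbacks of differentials along a chain of morphisms of modular curves, and by Proposition~\ref{prop:aut} the Galois automorphism of the graph is always matched by the Fricke automorphism of $\calM_{H_p}$.

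First I would treat $H=B_0(N)$ and $H=B_1(N)$, which are the easiest: here $M_{B_0(N)_p,\C}$ and $M_{B_1(N)_p,\C}$ are connected and canonically $\Gamma_0(pN)\backslash\ol\HH$ and $(\Gamma_0(p){\cap}\Gamma_1(N))\backslash\ol\HH$, the operator $T_\ell$ acts as $\tT_\ell$ because $\smt\ell001\in H$, the Fricke automorphism of $\calM_{H_p}$ is by its moduli description the classical Fricke involution $w_p$ of $X_0(pN)$, resp. $X_1(pN)$, and likewise each Atkin--Lehner automorphism $w_q$ of $\calM_{B_0(N)_p}$ is the classical $w_q$. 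A diamond operator $\langle d\rangle$ acts on differentials by pullback, hence as $\dia{d^{-1}}$ (the inverse being the transpose in the convention \eqref{action_GL2}), and for $B_1(N)$ it respects the $\chi$-isotypic decomposition. Since all of these preserve the $p$-new subspace, passing to $H^0(\calA_{H,p,\C},\Omega^1)=S_2^\pnew(\cdot)$ yields the two middle bullets.

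Next comes $H=\{\Id\}$, which is the real work. I would carry the chain of isomorphisms of the proof of Theorem~\ref{thm:modular_special} --- the stack isomorphisms $\calM_{B'(N^2)_p}\cong\calM_{\{\Id\}\times B_0(p)}$, the cover $M_{B_2(N^2)_p}\to M_{B'(N^2)_p}$ with passage to $B'(N^2)/B_2(N^2)$-invariants, and the component decomposition $M_{B_2(N^2)_p,\C}\cong\bigsqcup_{a\in(\Z/N^2\Z)^\times}(\Gamma_1(N^2){\cap}\Gamma_0(p))\backslash\ol\HH$ --- and verify step by step that each morphism intertwines the Fricke automorphisms and is equivariant for the diamond operators. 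The payoff is exactly the computation already performed in Section~\ref{s:forms_vs_aut} for $\Gamma(N){\cap}\Gamma^0(p)$, transported along the chain: the Fricke automorphism becomes $w_p\otimes\sigma_{1/p}$ and the scalar-matrix automorphism $\langle d\rangle$, acting by pullback, becomes $\dia{d^{-1}}\otimes\sigma_{d^{-2}}$, which is the action \eqref{action_GL2_2} of $g^{-1}$ for $g=\smt d00d$. Finally, for $H$ a non-split Cartan I would invoke the $T_\ell$-equivariant isogenies $\Pic^0(M_H)\sim\prod_{d|N}J_0^{\new}(d^2)$ and $\Pic^0(M_{H_p})\sim\prod_{d|N}\big(J_0^{\new}(d^2)^2\times J_0^{\new}(pd^2)\big)$ of \cite[Lemma 3.1, Theorem 3.8]{manypoints}, together with the fact that these isogenies are also compatible with the Atkin--Lehner operators and with the matricial automorphisms $\langle g_q\rangle$ (which on the new quotient at $d$ realize the $q$-th Atkin--Lehner involution, and act trivially there when $q\nmid d$), to upgrade the $\tT_\ell$-module isomorphism of Theorem~\ref{thm:modular_special} to one respecting also the Fricke and Atkin--Lehner involutions.

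The hard part will be the bookkeeping in the full-level case: choosing matrix representatives for the Fricke and diamond maps that stay compatible all along the chain of moduli isomorphisms, and pinning down that the twists on the auxiliary factor $\C^{(\Z/N\Z)^\times}$ come out with exactly the exponents $1/p$ and $d^{-2}$. A secondary subtlety is to confirm that the isogenies of \cite{manypoints} are, or can be made, equivariant for the whole Atkin--Lehner structure and not merely for the Hecke action; once that is in place the non-split Cartan case follows formally.
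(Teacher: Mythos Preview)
Your approach is correct and is exactly the one the paper intends: the paper gives no separate proof of Theorem~\ref{thm:modular_special_aut}, presenting it as the explicit specialization of Theorem~\ref{thm:modular_aut} via the identifications established in the proof of Theorem~\ref{thm:modular_special} and the operator computations of Section~\ref{s:forms_vs_aut}. Your bookkeeping plan for each bullet (including the caveat about checking Atkin--Lehner equivariance of the isogenies from \cite{manypoints} in the non-split Cartan case) matches what is needed.
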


\begin{subRemark}
To have an isomorphism which is \emph{simultaneously} equivariant with respect to all automorphisms, in Theorems \ref{thm:modular_aut} and \ref{thm:modular_special_aut} we used the adjoint $A^*$ of the adjacency matrix. Instead, in Theorems \ref{thm:modular_general} and \ref{thm:modular_special}, for merely aesthetic reasons, we preferred using the adjacency matrix, which is conjugate to its adjoint.
\end{subRemark}

\subsection{Asymptotic distribution of the eigenvalues}\label{s:forms_vs_distrib}

Following Serre \cite{Serre}, given a linear diagonalizable operator $P$ with spectrum $\sigma(P)$ and domain $V$ of finite dimension $r$, we introduce the probability measure 
$$
\mu(P,V):=\frac{1}{r}\sum_{\lambda\in \sigma(P)}\delta_{\lambda} \,,
$$
where $\delta_{\lambda}$ is a Dirac mass at $\lambda$. Let us also recall the Kesten-McKay measure supported on the Hasse interval $[-2\sqrt{\ell},2\sqrt{\ell}]$ from Equation \eqref{eq:mckay}
\[
\mu_{\ell}=\frac{\ell+1}{\pi}\frac{\sqrt{\ell-x^2/4}}{\ell(\ell^{1/2}+\ell^{-1/2})^2-x^2}dx \,.
\]

We are interested in $\mu(P,V)$ when $P$ is a Hecke operator and $V$ is one of the spaces appearing in Theorem \ref{thm:modular_special}. The  following theorem gives asymptotics, implying Corollary \ref{cor:asy_dist}.

\begin{subTheorem}\label{thm:trace_asymptotic}

Fix a prime $\ell$, a positive integer $N$ coprime with $\ell$, and let $p_i$ be an increasing sequence of prime numbers coprime with $N\ell$. Then 
$$
\lim_{i\to \infty}\mu \left(T_{\ell}, S_2^{p_i-new}(\Gamma_0(p_iN))\right) =\lim_{i\to \infty}\mu \left(T_{\ell}, \bigoplus_{d|N}S_2^{\new}(\Gamma_0(p_id^2))\right)    
=\mu_{\ell} \,,$$
and, for each character $\chi$ modulo $N$,
$$\lim_{i\to \infty}\mu \left(T_{\ell}, S_2^{p_i\text{-new}}(\Gamma_1(p_iN),\chi)\right) = \lim_{i\to \infty}\mu \left(T_{\ell}, S_2^{p_i\text{-new}}(\Gamma_1(p_iN^2),\chi)\right)=\sqrt{\chi(\ell)}\mu_{\ell}  \,. $$

\end{subTheorem}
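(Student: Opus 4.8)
The plan is to reduce all four cases to Serre's equidistribution theorem for Hecke operators \cite{Serre} and to its variant with nebentypus (which follows by the same method). Recall that this theorem asserts: if $M\to\infty$ through positive integers prime to $\ell$, then the eigenvalues of $T_\ell$ on $S_2(\Gamma_0(M))$, counted with multiplicity, equidistribute with respect to the Kesten--McKay measure $\mu_\ell$ of \eqref{eq:mckay}, i.e.\ $\mu\big(T_\ell,S_2(\Gamma_0(M))\big)\to\mu_\ell$; and, for a Dirichlet character $\chi$ modulo $M$ with $\chi(\ell)\ne 0$, $\mu\big(T_\ell,S_2(\Gamma_1(M),\chi)\big)\to\sqrt{\chi(\ell)}\,\mu_\ell$, where $\sqrt{\chi(\ell)}\,\mu_\ell$ is the push-forward of $\mu_\ell$ along $x\mapsto\sqrt{\chi(\ell)}\,x$ (well defined independently of the choice of square root, since $\mu_\ell$ is invariant under $x\mapsto-x$). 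Both follow from the moment method: via the Hecke relations (see e.g.\ \cite{DS}) one writes $T_\ell^{m}$ as a $\Z$-linear combination of $T_{\ell^{0}},\dots,T_{\ell^{m}}$, the Eichler--Selberg trace formula shows that $\tfrac1{\dim}\tr(T_{\ell^{j}}\mid\cdot)$ has a limit as $M\to\infty$ with $\ell\nmid M$, independent of how $M$ grows, and the resulting limits of $\tfrac1{\dim}\tr(T_\ell^{m}\mid\cdot)$ are the moments of $\mu_\ell$ (resp.\ of $\sqrt{\chi(\ell)}\,\mu_\ell$), which determine these compactly supported measures. The twist by $\sqrt{\chi(\ell)}$ is already forced by the relation $T_\ell^{2}=T_{\ell^{2}}+\chi(\ell)\ell$ on $S_2(\Gamma_1(M),\chi)$ for $\ell\nmid M$, which rescales the three-term recursion satisfied by the $T_{\ell^{j}}$.

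Granting this, I first handle the first, third and fourth cases, where the relevant space is a $T_\ell$-stable, Petersson-orthogonal summand of $S_2(\Gamma_0(p_iN))$, $S_2(\Gamma_1(p_iN),\chi)$ or $S_2(\Gamma_1(p_iN^2),\chi)$. These ambient spaces have level tending to infinity (since $p_i\to\infty$) and prime to $\ell$ (since $p_i$ is prime to $N\ell$), so Serre's theorem applies to them. On each of them $T_\ell$ (with $\ell\ne p_i$) is normal for the Petersson inner product — it is self-adjoint up to the scalar $\chi(\ell)^{-1}$ — hence diagonalizable, and it preserves both the $p_i$-new subspace and its Petersson complement $O_i$, the $p_i$-old subspace. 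The latter is spanned by the images of the two degeneracy maps from level $N$ (resp.\ $N^2$), so $\dim O_i$ is at most $2\dim S_2(\Gamma_0(N))$ (resp.\ $2\dim S_2(\Gamma_1(N),\chi)$, $2\dim S_2(\Gamma_1(N^2),\chi)$), bounded independently of $i$, whereas the $p_i$-new subspace has dimension tending to infinity. Writing $d_i$, $d_i^{\new}$, $d_i^{\mathrm{old}}$ for the three dimensions, the multiset of eigenvalues of $T_\ell$ on the ambient space is the disjoint union of those on the two summands, so
\[
\mu\big(T_\ell,S_2(\Gamma_0(p_iN))\big)=\frac{d_i^{\new}}{d_i}\,\mu\big(T_\ell,S_2^{p_i\text{-new}}(\Gamma_0(p_iN))\big)+\frac{d_i^{\mathrm{old}}}{d_i}\,\mu(T_\ell,O_i)\,,
\]
and similarly with nebentypus. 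Here $d_i^{\mathrm{old}}/d_i\to0$, $d_i^{\new}/d_i\to1$, the left-hand side tends to $\mu_\ell$ (resp.\ $\sqrt{\chi(\ell)}\,\mu_\ell$) by Serre's theorem, and $\mu(T_\ell,O_i)$ is a probability measure supported in the fixed compact set $[-2\sqrt\ell,2\sqrt\ell]$ by Ramanujan for weight $2$ forms (as used in Theorem \ref{thm:bound}); hence $\mu\big(T_\ell,S_2^{p_i\text{-new}}(\Gamma_0(p_iN))\big)\to\mu_\ell$, and likewise $\mu\big(T_\ell,S_2^{p_i\text{-new}}(\Gamma_1(p_iN),\chi)\big)\to\sqrt{\chi(\ell)}\,\mu_\ell$ and $\mu\big(T_\ell,S_2^{p_i\text{-new}}(\Gamma_1(p_iN^2),\chi)\big)\to\sqrt{\chi(\ell)}\,\mu_\ell$.

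For $\bigoplus_{d\mid N}S_2^{\new}(\Gamma_0(p_id^2))$ I would first record the new-form version of Serre's theorem, $\mu\big(T_\ell,S_2^{\new}(\Gamma_0(M))\big)\to\mu_\ell$ as $M\to\infty$ with $\ell\nmid M$ and $\dim S_2^{\new}(\Gamma_0(M))\to\infty$; this follows from the full-level statement by M\"obius inversion over the divisors of $M$ applied to $S_2(\Gamma_0(M))=\bigoplus_{e\mid M}S_2^{\new}(\Gamma_0(e))^{\oplus\sigma_0(M/e)}$ (and it is in fact the form in which the equidistribution theorem is often stated). Applying it with $M=p_id^2\to\infty$ for each fixed $d\mid N$ (note $\ell\nmid p_id^2$ since $\ell$ is prime to $N$), and writing $\mu\big(T_\ell,\bigoplus_{d\mid N}S_2^{\new}(\Gamma_0(p_id^2))\big)$ as the convex combination $\sum_{d\mid N}w_d^{(i)}\,\mu\big(T_\ell,S_2^{\new}(\Gamma_0(p_id^2))\big)$ with weights $w_d^{(i)}\in[0,1]$ summing to $1$, each summand tends to $\mu_\ell$, hence so does the combination: for a bounded continuous test function the difference from $\int d\mu_\ell$ is at most the maximum over $d$ of the individual differences. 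This settles all four cases, and together with Theorem \ref{thm:modular_special} it yields Corollary \ref{cor:asy_dist}.

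The actual content is Serre's theorem and its character-twisted analogue, i.e.\ the Eichler--Selberg trace formula plus the identification of the limiting moments; the reductions above are routine manipulations with probability measures. The point that requires genuine care is the nebentypus case: one must check that the character affects the trace formula only through the scalar $\chi(\ell)$ appearing in the Hecke recursion — the remaining terms (class-number sums, cuspidal contributions) being negligible against the dimension as the level grows — so that the limiting distribution is exactly the rotated Kesten--McKay measure $\sqrt{\chi(\ell)}\,\mu_\ell$ and not something depending on $\chi$ more delicately. If one prefers to avoid re-deriving the nebentypus trace formula, an alternative is to twist newforms of nebentypus $\chi$ by a suitable Dirichlet character, reducing to trivial nebentypus at the cost of enlarging the level (which still tends to infinity) and then transporting the equidistribution back, which reintroduces the $\sqrt{\chi(\ell)}$ factor.
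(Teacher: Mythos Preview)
Your proof is correct and follows essentially the same approach as the paper: decompose the ambient space into the $p_i$-new part and a $p_i$-old part of bounded dimension, observe that the spectral measure on the full space is a convex combination of the two pieces with the old weight tending to zero, and then invoke Serre's equidistribution results (\cite[Theorems 1 and 4]{Serre}) for the ambient space. Your treatment of the $\bigoplus_{d\mid N}S_2^{\new}(\Gamma_0(p_id^2))$ case via the newform version of Serre's theorem is slightly more explicit than the paper's, but amounts to the same reduction; and your discussion of the nebentypus case is more detailed than the paper's simple citation of \cite[Theorem 4]{Serre}, but not substantively different.
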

Observe that $\mu_{\ell}=-\mu_{\ell}$, so it does not matter which sign of the square root of $\chi(\ell)$ we choose.
\begin{proof}
Let us first prove the theorem for $S_2^{p_i-new}(\Gamma_0(p_iN))$. As Hecke modules we have
$$
S_2(\Gamma_0(p_iN))=S_2^{p_i-\new}(\Gamma_0(p_iN))\oplus S_2(\Gamma_0(N))^{\oplus 2} \,.
$$
Passing to measures, and denoting $d(k) = \dim S_2(\Gamma_0(k))$, $d(p,k) = \dim S_2^\pnew(\Gamma_0(pk))$, we get 
$$
\mu(T_{\ell},S_2(\Gamma_0(p_iN))=
\frac{d(p_i,N)}{d(p_iN)}\mu(T_{\ell},S_2(\Gamma_0(p_iN))^{p_i-new})+2 \frac{d(N)}{d(p_iN)} \mu(T_{\ell}, S_2(\Gamma_0(N))) \,,
$$
the second summand on the right hand side goes to zero when $i$ goes to infinity, hence we deduce the claim from \cite[Theorem 1]{Serre}.

The other cases are proved in the same way, replacing \cite[Theorem 1]{Serre} first with \cite[Theorem 1]{Serre} and then with \cite[Theorem 4]{Serre}.

\end{proof}

\appendix
\section{Correspondences on nodal curves}\label{S:nodal}

In the first part of this Appendix we recall for the reader convenience well-known facts and notations about the Picard group of nodal curves. We then use it to state and prove Proposition \ref{prop:correspondence_T}.

Suppose we are given two smooth projective curves $C_1, C_2$ over a field $k=\ol k$. 
We allow for $C_1$ and $C_2$ to be disconnected with the same number of connected components which we denote $C_1^1, \ldots, C_1^r$ of $C_1$, and $C_2^1, \ldots, C_2^r $. We suppose that for each $j=1,\ldots, r,$ we are given distinct points $x_1^j, \ldots x_{n_j}^j\in C_1^j(k)$ and $y_1^j, \ldots y_{n_j}^j\in C_2^j(k)$, and we look at the nodal curve 

\begin{equation}\label{eq:glued_curve}
X = (C_1\sqcup C_2)/x_i^j = y_i^j \,.
\end{equation}
For simplicity we assume that $n_j\ge 1$, so that $X$ has exactly $r$ connected components, namely the curves $X_j = (C_1^j\sqcup C_2^j)/x_i^j = y_i^j$, each having $2$ irreducible components.  
Anyway Proposition \ref{prop:correspondence_T} remains true if we suppose that the number of indices $j$ such that $n_j\ge 1$ (keeping the notation $r$ for this number) is strictly smaller than the number of components of $C_1$ and of $C_2$, possibly different.

Let  $ J = \Pic^0_{X/k}$ be the scheme representing invertible sheaves on $X$ having degree $0$ when restricted to each irreducible component of $X$. In particular,the natural maps $C_1 \to X$ and $C_2\to X$ induce by pull back a map
\begin{equation} \label{eq:tech_JCi}
J \lto \Pic^0_{C_1/k} \times \Pic^0_{C_2/k}\,.
\end{equation}
Such a map is surjective: given invertible sheaves $\calL_i$ over $C_i$, we can construct a (non-canonical) lift of $(\calL_1, \calL_2)$ by choosing generators $v_i^j, w_i^j$ of $(x_i^j)^*\calL_1, (y_i^j)^*\calL_2$ and defining the invertible sheaf $\calL = \calL_{\calL_1, \calL_2, (v_i^j, w_i^j)_{i,j}}$ on $X$ associating to each open $U\subset X$, the module 
\begin{equation}\label{eq:glue_L}
\calL(U) = \{(f,g)\in \calL_1(U\cap C_1)\times\calL_2(U\cap C_2):\, f(x_i^j)/v_i^j = g(y_i^j)/w_i^j \text{ for each }i,j\}\,.
\end{equation}
We notice that the structure sheaf is a particular case of the above construction, namely when $\calL_i = \calO_{C_i}$ and $v_i = x_i^*1, w_i = y_i^*1$. 
Moreover, all the lifts of $(\calL_1, \calL_2)$ are obtained with this construction: given a lift $\calM$, we choose for each $i$ a section trivializing $\calM_{x_i}$, which determines by pull back sections $v_i, w_i$; then the pull back of sections to $C_i$ determines a morphism of $\calO$-modules $\calM \to \calL_{\calL_1, \calL_2, (v_i, w_i)_i}$, which is an isomorphism because of how the structure sheaf is defined.

Since map (\ref{eq:tech_JCi}) is surjective, we have an exact sequence of group schemes over $k$
\begin{equation}\label{eq:nodal_torus}
0 \lto T \lto J \lto  \Pic^0_{C_1/k} \times \Pic^0_{C_2/k} \lto 0\,,
\end{equation}
for a certain group scheme $T$.
For every $k$-algebra $A$ we can describe the points on $T$ explicitly using (\ref{eq:glue_L}): for every choice of $i,j$, the line bundle $(y_i^j)_{\Spec A}^*\calO_{C_{2,\Spec A}}$ is canonically trivial, hence  hence its generating sections are canonically elements of $A^\times$; in particular, every line bundle on $X_{\Spec A}$ that is trivial on the $C_i$'s is isomorphic to
\[
\calL_{a} := \calL_{\calO_{C_1}, \calO_{C_2}, (1, a(y_i^j))} \qquad \text{for some function } a\colon Y =  \{y_1^1, \ldots, y_r^{n_r}\} \lto A^\times   \,.
\]

More formally, denoting by  $\G_m^Y$ the set of maps from $Y$ to $\G_m$, we have a surjective map
$$
\begin{aligned}
	\Gm^Y & \longrightarrow  T \\
    a & \longmapsto  \calL_a
\end{aligned}
$$
Let us study the kernel. Which of the invertible sheaves $\calL_{a}$ are trivial?  Exactly those where $a(y_i^j)$ does not depend on $i$ but only on $j$: indeed $\calL_a$ is trivial if an only if it is trivial when restricted to each connected component $X^j$ of $X$, and, since $\mathcal L_{a}|_{X^j}$ has degree $0$, then it is trivial  if and only if it has a non trivial global section, which implies our claim using (\ref{eq:glue_L}) and the fact the only global functions on $C_1^j$ and $C_2^j$ are constant. This discussion implies that the following sequence of group schemes over $k$ is exact
\begin{equation}\label{eq:T_general}
\begin{tikzcd}
	0 \arrow[r] &\Gm^r \arrow[r, "\Delta"] &\Gm^Y \arrow[r] & T \arrow[r] &0
\end{tikzcd}
\end{equation}
where $\Delta(b_1, \ldots, b_r)(y_i^j)=b_j$.

The above exact sequence, allows us to describe the characters of $T$. We have canonical isomorphisms $(\Gm^Y)^\vee= \Hom (\Gm^Y, \Gm) = \Z^Y =  \bigoplus_{i,j} \Z y_i^j$ and $(\Gm^r)^\vee= \Hom (\Gm^r, \Gm) = \Z^r$ and the map $\Delta$ induces 
\[
\Sigma = \Delta^\vee\colon  \bigoplus_{i,j} \Z y_i^j \lto \Z^r  \,, \quad  \sum_{i,j} m_i^j  y_i^j \longmapsto \Big( \sum_{i=1}^{n_1}m_i^1, \ldots, \sum_{i=1}^{n_r} m_i^r \Big)\,.
\]
Then, the exact sequence (\ref{eq:T_general}) gives the following isomorphism 
\begin{equation}\label{eq:T_dual_general}
\begin{tikzcd}
	T^\vee = \Hom(T,\Gm) \arrow[r,equal]&  \ker(\Delta^\vee\colon \Gm^{T,\vee} \to \Gm^{r,\vee})  \arrow[r,equal] & \ker (\Sigma) 
	\\
	\calL_a \mapsto \prod_{i,j} a(y_i^j)^{m_i^j}&&   \sum_{i,j} m_i^j  y_i^j \arrow[ll, mapsto] \,.
\end{tikzcd}
\end{equation}

In the next proposition we describe how certain correspondences act on $T$ and on its characters, which is applied in the proof of Theorem \ref{thm:comparison} to the Hecke operator \ref{def:Hecke}. In the notation of the proposition, we do not keep track of the connected components.

\begin{Proposition}\label{prop:correspondence_T}
Let $k$ be an algebraically closed field and let $C= (C_1 \sqcup C_2)/(x_i = y_i)_{i=1}^n$ and $D= (D_1 \sqcup D_2)/(v_j = w_j)_{i=1}^m$ be curves over $k$ described as in (\ref{eq:glued_curve}), with $C_i, D_i$ smooth. 

Let $F,G\colon D\to C$ be maps restricting to $F_i, G_i\colon D_i\to C_i$ and sending the smooth part of $D$ into the smooth part of $C$ and the nodal points to the nodal points.
Then, for each $a \colon \{y_1,\ldots, y_n\}\to k^\times$ we have 
\begin{equation} \label{eq:correspondece_torus}
	G_* F^* \calL_a \cong \calL_b \qquad \text{ for } b := a\circ {F_2}_*G_2^* \colon \, y_i \mapsto \prod_{G_2(v) = y_i} a(F_2(v))^{\mathrm{ord}_v(G_2)} \,,
\end{equation}
where $G_*$ is a cycle push-forward.

Let $T$ be the maximal torus of $\Pic_{C/k}$, as in (\ref{eq:T_general}), and let $T^\vee$ be its groups of characters. Keeping track of how the points $y_i$ are distributed among the components of $C_2$, we get an isomorphism, analogous to (\ref{eq:T_dual_general}),
$$ 
T^\vee = \ker \left( \Sigma\colon \bigoplus_{i=1}^n \Z y_i \to \Z^r \right) \,. 
$$
Using the above isomorphism, the map $(G_* F^*)^\vee$ is the restriction of the map $H$ below
\begin{equation}\label{eq:correspondence_torus_dual}
	\begin{tikzcd} \displaystyle
		T^\vee \arrow[d, "(G_* F^*)^\vee"] \arrow[rr,hook] &&  \displaystyle  \bigoplus_{i=1}^n  \Z y_i \arrow[d, "H"] & y_i \arrow[d,mapsto]
		\\ T^\vee  \arrow[rr,hook]  &&  \displaystyle  \bigoplus_{i=1}^n \Z y_i &   \displaystyle 	\sum_{G_2(v)=y_i}\mathrm{ord}_v(G_2) F_2(v) \,.
	\end{tikzcd}	
\end{equation}
\end{Proposition}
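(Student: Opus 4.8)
The plan is to extract the second assertion (about $T^\vee$) formally from the first, and to prove \eqref{eq:correspondece_torus} by computing the effect of $F^*$ and of $G_*$ on the $k$-points $\calL_a$ of $T$ separately. For the reduction: once \eqref{eq:correspondece_torus} is known, a character $\chi=\sum_{i=1}^n m_i y_i\in T^\vee\subseteq\bigoplus_i\Z y_i$ pairs, via \eqref{eq:T_dual_general}, with $G_*F^*\calL_a=\calL_b$ as $\langle\chi,G_*F^*\calL_a\rangle=\prod_i b(y_i)^{m_i}=\prod_{v}a(F_2(v))^{m_{G_2(v)}\,\mathrm{ord}_v(G_2)}$, the product running over the nodes $v$ of $D$, and this equals $\langle H(\chi),\calL_a\rangle$ for $H$ the map of \eqref{eq:correspondence_torus_dual}. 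Since the $\calL_a$ exhaust $T(k)$ and characters are separated by $k$-points, $(G_*F^*)^\vee=H|_{T^\vee}$; in particular $H$ carries $T^\vee$ into itself, which is the content of the diagram in \eqref{eq:correspondence_torus_dual}.

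For $F^*$: because $F$ sends the smooth locus to the smooth locus, the nodes to the nodes, and $D_i$ to $C_i$, on a formal neighbourhood of a node $v$ of $D$ it carries the two branches to the two branches of the node $F(v)$ of $C$ and is an isomorphism on the reduced point underlying $v$. Pulling back the description \eqref{eq:glue_L} of $\calL_a$ — which glues $\calO_{C_1}$ to $\calO_{C_2}$ with the scalar datum $(1,a(y_i))$ at the node $x_i=y_i$ — a local section of $F^*\calL_a$ near $v$ is a pair $(f\circ F_1,\,g\circ F_2)$ with $f(x_i)/1=g(y_i)/a(y_i)$; evaluating at $v$ turns this into the gluing relation with scalar $a(F_2(v))$, the ramification of $F$ along the branches playing no role since only values at the node enter. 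Hence $F^*\calL_a$ is trivial on $D_1$ and $D_2$ with gluing datum $v\mapsto a(F_2(v))$, i.e.\ $F^*\calL_a=\calL_{a\circ F_2}$, and $F^*$ maps $T$ into the analogous torus $T_D$ of $\Pic_{D/k}$.

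For $G_*$: write $a'=a\circ F_2$; since $b\mapsto\calL_b$ (by \eqref{eq:T_general}) and $G_*$ (being a cycle push-forward) are homomorphisms, it suffices to treat $a'$ supported at a single node $v$, i.e.\ $\calL_{a'}$ with gluing scalar $c:=a'(v)$ at $v$ and trivial elsewhere. Let $q\in D_2$ be the point defined by $v$. On the component of $D_2$ through $q$ choose, using that $k$ is infinite, a rational function $h$ with $h(q)=c$ that is regular, nonzero and equal to $1$ at every other point of $D_2$ whose image under $G_2$ is a node of $C$, and whose divisor avoids all these points; set $h=1$ on the other components of $D_2$. Then, up to the conventional inversion of \eqref{eq:glue_L} which one tracks, $\calL_{a'}\cong\calO_D(\divisor_{D_2}(h))$, a divisor on the smooth locus of $D$, so $G_*\calL_{a'}=\calO_C\!\big(G_*\divisor_{D_2}(h)\big)=\calO_C\!\big(\divisor_{C_2}(\mathrm{Nm}_{k(D_2)/k(C_2)}h)\big)$ by the usual formula for push-forward of a divisor along the finite morphism $G_2$ of smooth curves. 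Thus $G_*\calL_{a'}=\calL_b$ with $b(y)$ the value of $\mathrm{Nm}_{k(D_2)/k(C_2)}(h)$ at $y\in C_2$, which by the local expression of the norm equals $\prod_{q'\in G_2^{-1}(y)}h(q')^{\mathrm{ord}_{q'}(G_2)}$; all $q'\neq q$ contribute $1$, so $b(y)=c^{\mathrm{ord}_v(G_2)}$ if $G(v)=y$ and $b(y)=1$ otherwise. Multiplying over the nodes and substituting $a'=a\circ F_2$ gives \eqref{eq:correspondece_torus}.

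I expect the one genuine idea to be the realization of a torus element of $\Pic^0(D)$, which is trivial on each component, as the class of the divisor of a rational function supported on $D_2$ away from the nodes; after that the exponents $\mathrm{ord}_v(G_2)$ appear automatically from the value of the norm of that function at a point. The remaining work is bookkeeping but is where care is needed: keeping the two branches apart so that it is $G_2$ and the $C_2$-side scalars, not $G_1$, that govern the exponents; tracking the inversion convention of \eqref{eq:glue_L} when passing between $\calL_b$ and $\calO_C(\divisor_{C_2}(\cdot))$; and checking that the finitely many conditions imposed on $h$ are mutually consistent. The possible disconnectedness of $C_1,C_2,D_1,D_2$ costs nothing, the argument above already being componentwise on the $D_2$-side.
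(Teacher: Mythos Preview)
Your proof is correct and follows essentially the same approach as the paper: representing a torus element $\calL_{a'}$ by the divisor on the smooth locus of a rational function on the $D_2$-branch with prescribed values at the nodes, then invoking the norm formula for the cycle push-forward along $G_2$ to read off the gluing data of $G_*\calL_{a'}$. The only organizational differences are that the paper treats $G_*F^*$ in one stroke (pulling back the meromorphic section $(1,f)$ and pushing forward via $\Norm_{G_2}(F_2^*f)$) rather than separating $F^*$ and $G_*$, and works with a general $a$ rather than reducing to a single node; the deduction of the dual statement is the same computation, the paper checking it on generators $y_i-y_j$ where you check it on all of $T^\vee$.
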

\begin{proof}

We first give a description of $T$ in terms of Cartier divisors. For a function $a\colon \{y_i\}\to k^\times$, take a meromorphic function $f\in k(C_2)$ such that $f(y_i) = a(y_i)$ for every $i$. By (\ref{eq:glue_L}), the pair $(1,f)$ defines a meromorphic section of $\calL_a$. The divisor attached to this section is supported in $C_2\setminus \{v_1, \dots v_n\}$, and can be identified with the divisor $\divisor f$. As explained for instance in \cite[Section 1, Proposition 1.4 (b)]{Ful}, the push-forward of a cycle attached to a meromorphic function can be computed using the norm, so 
\begin{equation}\label{eq_1}
	G_* F^* \calL_a \cong G_* F^* (\divisor (1,f)) = G_*  \divisor (F^*(1,f) ) = \divisor ((1,\Norm_{G_2}(F_2^*f))) 
	= \mathcal L_{c}  \,,
\end{equation}
for 
\[
c = \Norm_{G_2}(F_2^*f)|_{\{y_i\}}\,.
\]

To prove (\ref{eq:correspondece_torus}), it remains to prove $c=b$. The norm is compatible with pull-backs, i.e. if we want to compute $\Norm_{G_2}(F_2^*f))(y_i)$ we can look at the base change $G_2\colon G_2^{-1}(y_i) \to y_i$, the pull-back of $F_2^*f$ to $G_2^{-1}(y_i)$ and then compute the norm; we conclude that
$$
(\Norm_{G_2}(F_2^*f))(y_i) = \prod_{G_2(v)=y_i}(F_2^*f)(v)^{\mathrm{ord}_v G_2}\,.
$$
Since $G_2$ and $F_2$ send the smooth part of $D_2$ in the smooth part of $C_2$ (and analogously for the inverse images), then all the $v$'s appearing above lie in the set $\{w_j\}$ and consequently the points $F_2(v)$ lie in the set $\{y_j\}$, so
\[
\prod_{G_2(v)=y_i}(F_2^*f)(v)^{\mathrm{ord}_v G_2} = \prod_{G_2(v)=y_i}f(F_2(v))^{\mathrm{ord}_v G_2} = \prod_{G_2(v)=y_i}a(F_2(v))^{\mathrm{ord}_v G_2}\,,
\]

For the second part of the proposition, namely Equation \eqref{eq:correspondence_torus_dual}, it is enough proving that for each $i,j$ we have $(G_*F^*)^\vee(y_i - y_j) = H(y_i-y_j)$, which is true since 
\begin{equation*} 
	\begin{aligned}
		& (G_*F^*)^\vee (y_i-y_j)(\calL_a)    = (y_i-y_j)(\calL_b) = \tfrac{b(y_i)}{b(y_j)} = 
		\prod_{G_2(v) = y_i} \!\! a(F_2(v))^{ \mathrm{ord}_v(G_2)} \cdot \prod_{G_2(v) = y_j}  \!\!  a(F_2(v))^{-\mathrm{ord}_v(G_2)}
		\\ &
		\quad =a\left(\sum_{G_2(v)=y_i}\mathrm{ord}_v(G_2) F_2(v) - \sum_{G_2(v)=y_j}\mathrm{ord}_v(G_2) F_2(v)\right) = a\left( H(y_i-y_j) \right) = H(y_i-y_j) (\calL_a)\,.
	\end{aligned}
\end{equation*}\end{proof}

\section{Numerical experiments on the largest non-trivial eigenvalue}\label{appB}

In this appendix we focus on isogny graphs with trivial level structure, so we omit $H$ from the notations. We study the value $\eta=\eta(p,\ell)$ for the graph $G(p,\ell)$. Recall from Equation \eqref{eq:gap} that
$$
\eta(p,\ell):=\min\{2\sqrt{\ell}-|\lambda|\}\,,
$$
where the minimum ranges among all non-trivial eigenvalues of the adjacency matrix of the graph $G(p,\ell)$. In other words, $\eta(p,\ell)$ is the biggest number such that all non-trivial eigenvalues of the adjacency matrix of $G(p,\ell)$ lie in the shrunk Hasse interval $[-2\sqrt{\ell}+\eta(p,\ell),2\sqrt{\ell}-\eta(p,\ell)]$.

For a fixed $\ell$, the number of vertexes of $G(p,\ell)$ is linear in $p$, and Alon-Boppana inequality implies that $\eta\leq C \log(p)^{-2}$, for some constant $C$ which depends only on $\ell$. Recall that lower bounds on $\eta$ give bounds on the mixing time of the graph via Proposition \ref{mixing_times}. To best of our knowledge, sharp lower bounds and the asymptotic of $\eta$ are not known, see Question \ref{q:gap}.

\medskip

We have computed some values of $\eta(p,\ell)$ using the database \cite{isodb}, which lists graphs with $\ell=2,3,5,7,11$ and $p<30.000$. The results are displayed in Figure \ref{figuraeta}.

For fixed $\ell$,  the order of magnitude of $\eta$ varies a lot, so it is more convenient for graphical reasons to plot $\log(1/\eta)$ (we plot these values with light blue dots). Under the transformation $x \mapsto \log(1/x)$, our bound in Theorem \ref{thm:spet1} on $\eta$ is linear in $p$, reaching the thousands; being quite far from the data below, we do not even plot it. 

Alon-Boppana bound becomes $2\log(\log(p))$ up to an additive constant. We plot the function $2\log(\log(p))$ with a red line, to compare its shape with the data on $\eta$.

\begin{figure}[H]
\centering
    \includegraphics[width=17.5cm]{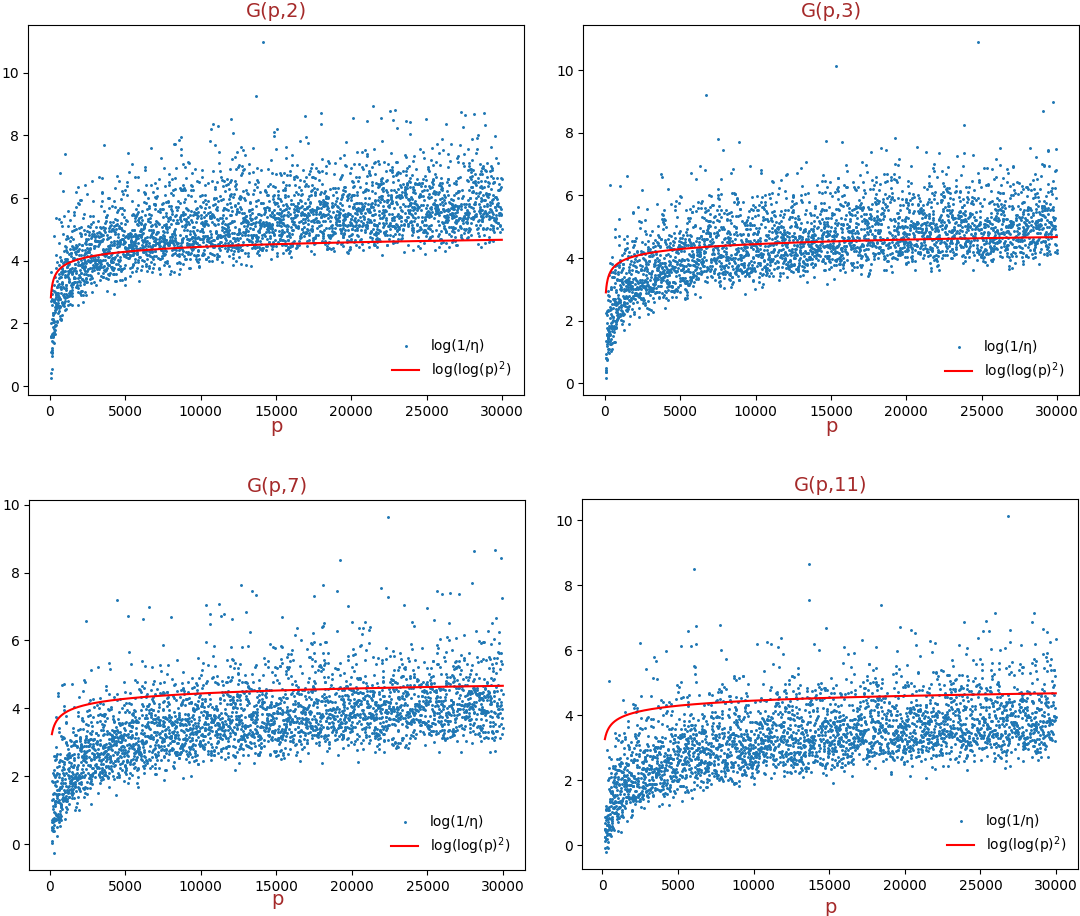}
    \caption{Numerical experiments on $\eta(p,\ell)$} \label{figuraeta}
\end{figure}

\end{document}